\definecolor{gr}{rgb}   {0.,   0.69,   0.23 }
\definecolor{bl}{rgb}   {0.,   0.5,   1. }
\definecolor{mg}{rgb}   {0.85,  0.,    0.85}
\definecolor{yl}{rgb}   {0.8,  0.7,   0.}
\definecolor{or}{rgb}  {0.7,0.2,0.2}
\newcommand{\VVert}[1]{{\left\vert\kern-0.25ex \left\vert\kern-0.25ex \left \vert #1 
    \right\vert\kern-0.25ex \right\vert\kern-0.25ex \right\vert}} 
\newcommand{\1}{\hspace{0.2mm}\textup{I}\hspace{0.2mm}}
\newtheorem{theorem}{Theorem} [section]
\newtheorem{lemma}[theorem]{Lemma}
\newtheorem{proposition}[theorem]{Proposition}
\newtheorem{remark}[theorem]{Remark}
\DeclareMathOperator{\Law}{Law}
\DeclareMathOperator{\Id}{Id}
\newcommand{\W}{\mathcal{W}}
\newcommand{\U}{\mathcal{U}}
\newcommand{\dr}{\theta}
\newcommand{\Dr}{\Theta}
\newcommand{\Ha}{\mathbb{H}_a}
\newcommand{\noi}{\noindent}
\newcommand{\Z}{\mathbb{Z}}
\newcommand{\R}{\mathbb{R}}
\newcommand{\T}{\mathbb{T}}
\newcommand{\N}{\mathbb{N}}
\let\P= \undefined
\newcommand{\P}{\mathbf{P}}
\newcommand{\PP}{\mathbb{P}}
\newcommand{\F}{\mathcal{F}}
\newcommand{\nb}{\nabla}
\newcommand{\al}{\alpha}
\newcommand{\be}{\beta}
\newcommand{\dl}{\delta}
\newcommand{\Dl}{\Delta}
\newcommand{\eps}{\varepsilon}
\newcommand{\g}{\gamma}
\newcommand{\ld}{\lambda}
\newcommand{\Ld}{\Lambda}
\newcommand{\s}{\sigma}
\newcommand{\ft}{\widehat}
\newcommand{\wt}{\widetilde}
\newcommand{\cj}{\overline}
\newcommand{\dt}{\partial_t}
\newcommand{\ta}{\theta}
\newcommand{\jb}[1]
{\langle #1 \rangle}
\renewcommand{\l}{\ell}
\renewcommand{\H}{\mathcal{H}}
\newcommand{\les}{\lesssim}
\newcommand{\ges}{\gtrsim}
\newcommand{\ind}{\mathbf 1}
\newcommand{\E}{\mathbb{E}}
\renewcommand{\o}{\omega}
\renewcommand{\O}{\Omega}
\numberwithin{equation}{section}
\numberwithin{theorem}{section}
\newcommand{\too}{\longrightarrow}
\newtheorem*{ackno}{Acknowledgements}
\newcommand{\M}{\mathcal{M}}
\tikzset{
	dot/.style={circle,fill=black,draw=black,inner sep=1pt,minimum size=0.5mm},
	>=stealth,
	}
\tikzset{
	ddot/.style={circle,fill=white,draw=black,inner sep=2pt,minimum size=0.8mm},
	>=stealth,
	}
\tikzset{decision/.style={ 
        draw,
        diamond,
        aspect=1.5
    }}
\tikzset{dia2/.style
={diamond,fill=white,draw=black,inner sep=0pt,minimum size=1mm},
	>=stealth,
	}
\tikzset{dia/.style
={star,fill=black,draw=black,inner sep=0pt,minimum size=1mm},
	>=stealth,
	}
\colorlet{symbols}{black}
\colorlet{testcolor}{green!60!black}
\def\1{\mathbf{{1}}}
\definecolor{dblue}{rgb}{0.1, 0.1, 0.9}
\tikzset{
	root/.style={circle,fill=testcolor,inner sep=0pt, minimum size=2mm},		
	dot/.style={circle,fill=black,draw=black, solid,inner sep=0pt,minimum size=0.75mm},
	bdot/.style={circle,fill=blue,draw=dblue, solid,inner sep=0pt,minimum size=0.75mm},
		}
\colorlet{symbols}{blue!90!black}
\def\DeclareSymbol#1#2#3{\expandafter\gdef\csname MH@symb@#1\endcsname{\tikz[baseline=#2,scale=0.15]{#3}}%
\expandafter\gdef\csname MH@symb@#1s\endcsname{\scalebox{0.6}{\tikz[baseline=#2,scale=0.15]{#3}}}}
\def\<#1>{\csname MH@symb@#1\endcsname}
\def\DeclareSymbol#1#2#3{\expandafter\gdef\csname MH@symb@#1\endcsname{\tikz[baseline=#2,scale=0.15]{#3}}}
\def\<#1>{\csname MH@symb@#1\endcsname}
\tikzstyle{dot1} = [ draw=  gray!00, 
\tikzstyle{dot2} = [ draw=  black, 
\tikzstyle{dot3} = [ draw=  gray!00, 
\def\DeclareSymbol#1#2#3{\expandafter\gdef\csname MH@symb@#1\endcsname{\tikz[baseline=#2,scale=0.15]{#3}}}
\def\<#1>{\csname MH@symb@#1\endcsname}
\begin{document}

\baselineskip = 14pt

\title[Asymptotic expansion of the $\Phi^4_2$-measure]
{Low temperature expansion for the Euclidean $\Phi^4_2$-measure}

\author[B.~Gess, K.~Seong, and P.~Tsatsoulis]
{Benjamin Gess, Kihoon Seong, and Pavlos Tsatsoulis}

\address{Benjamin Gess\\
Fakult\"at f\"ur Mathematik, Universit\"at Bielefeld\\
Bielefeld, D-33615\\ 
Max--Planck--Institut f\"ur Mathematik in den Naturwissenschaften\\ 
Leipzig, D-04103}

\email{bgess@math.uni-bielefeld.de}

\address{Kihoon Seong\\
Department of Mathematics\\
Cornell University\\ 
310 Malott Hall\\ 
Ithaca, New York, 14853\\ 
USA }

\email{kihoonseong@cornell.edu}

\address{Pavlos Tsatsoulis\\
Fakult\"at f\"ur Mathematik\\
Universit\"at Bielefeld\\ 
Bielefeld, D-33615}

\email{ptsatsoulis@math.uni-bielefeld.de}

\subjclass[2010]{81T08, 60H30, 60H17   }

\keywords{$\Phi^4_2$-measure; asymptotic expansion; law of large numbers, central limit theorem}

\begin{abstract}
We study asymptotic expansions of the Euclidean $\Phi^4_2$-measure in the low-temperature regime. In particular, this extends the asymptotic expansions of Gaussian function space integrals developed in Schilder (1966) and Ellis and Rosen (1982) to the singular setting, where the field is no longer a function, but just a distribution. As a consequence, we deduce limit theorems, specifically the law of large numbers and the central limit theorem for the $\Phi^4_2$-measure in the low-temperature limit.
\end{abstract}

\maketitle

\tableofcontents

\setlength{\parindent}{0mm}
\setlength{\parskip}{6pt}

\section{Introduction}

\subsection{Asymptotic expansions of the $\Phi^4_2$-measure}

The fundamental objects of constructive quantum field theory in the two-dimensional case set on the torus 
$\T^2 = (\R/2\pi\Z)^2$ are so-called $\Phi^4_2$ probability measures of the form\footnote{For simplicity we disregard the need for renormalization for the moment.}
\begin{align}
d\rho_\eps(\phi)&=Z_\eps^{-1} \exp\Big\{ -\frac{1}{\eps} H(\phi) \Big\}\prod_{x \in \T^2}d\phi(x),
\label{Gibbs}
\end{align}


\noi
where  $Z_\eps$ is the partition function, $\eps$ is a positive (temperature) parameter, and $\prod_{x\in\T^2}d\phi(x)$ is the informal Lebesgue measure on fields $\phi: \T^2 \to \R$. Here, the Hamiltonian $H(\phi)$ associated with the $\Phi^4_2$-measure \eqref{Gibbs} is given by\footnote{By convention, 
we endow $\T^2$ with the normalized Lebesgue measure $dx_{\T^2}= (2\pi)^{-2} dx$.}:
\begin{align}
H(\phi)&=\frac 12\int_{\T^2} |\nb \phi|^2 dx+\frac{1}{4} \int_{\T^2} (|\phi|^2-1)^2 dx \notag \\
&=\frac{1}{2} \int_{\T^2} |\nb \phi|^2 dx+ \textbf{V}(\phi),
\label{Ham0}
\end{align}

\noi
where $\textbf{V}(\phi):=\int_{\T^2} V(\phi) dx$ and $V(\phi):=\frac{\ld}{4}(|\phi|^2-1)^2$ is the double-well potential. The parameter $\ld>0$ is the so-called coupling constant that measures the strength of the interaction potential. The construction of such measures was a crucial achievement of the constructive field theory program during the 70’s and 80’s \cite{Nelson, GlimmJ68, GlimmJ73, GJS76, GJS762, FO76, GlimmJ87}.

One is then interested in estimating observables $F$ under this field, that is,
\begin{align}
\int F(\phi) \rho_\eps (d\phi)=Z_\eps^{-1}\int F(\phi) \exp\Big\{-\frac{1}{\eps} \textbf{V}(\phi) \Big\} \mu_\eps(d\phi),
\label{ob0}
\end{align}

\noi
where $F$ is a functional with sufficiently many Fr\'echet  derivatives and $\mu_\eps$ is the (log-correlated) Gaussian free field with covariance operator $\eps(-\Dl)^{-1}$ on $\T^2$, formally given by\footnote{Here $Z_\eps$ denotes different normalizing constants that may differ from one line to line.} 
\begin{align}
\mu_\eps(d\phi) 
= Z_{\eps}^{-1} \exp\Big\{-\frac 1{2\eps}  \jb{-\Dl\phi,\phi}_{L^2(\T^2)}    \Big\} \prod_{x\in \T^2} d\phi(x).
\end{align}
For example motivated from applications to synchronization by noise, see Subsection \ref{SUBSUB:SYN} below for more details, the observables $F$ in \eqref{ob0} have to be allowed to be polynomially growing.

\noi
The main result of this paper is to obtain asymptotic expansions  of \eqref{ob0} in the low-temperature limit $\eps \to 0$, of the form,   
\begin{align}
\int F(\phi)\exp\Big\{-\frac 1\eps \textbf{V}(\phi)  \Big\} \mu_\eps(d\phi)
= \sum_{j=0}^ka_j \eps^{\frac{j}{2}}+O(\eps^{\frac{k+1}{2}}),
\label{exp00}
\end{align}

\noi
with explicit constants $\{a_j\}_{j=0}^k \subset \R$, see  Theorem \ref{THM:1} below.

For instance, the coefficient $a_0$ is given by $\sum_{w \in \M}\dr_{\text{re}}(w)F(w)$, where $\{w\}_{w\in \M}$ is the set of minimizers of the Hamiltonian $H$, and $\dr_{\text{re}}(w)$ represents the asymptotic mass for the $\Phi^4_2$-measure concentrating on $w$ as $\eps \to 0$, which is explicitly given by the Carleman-Fredholm determinant. Hence, the case $k=0$ corresponds to the law of large numbers for the $\Phi^4_2$-measure, see also Theorem \ref{THM:2} below.

\noi
We further note that, for $k=1$, \eqref{exp00} implies a central limit theorem in the sense that 
 $$\lim_{\eps \to 0}\Bigg[\varepsilon^{-\frac{1}{2}}\bigg(\int F(\phi)\exp\Big\{-\frac 1\eps \textbf{V}(\phi)  \Big\} \mu_\eps(d\phi) - a_0 \bigg) \Bigg]=a_1= \sum_{w\in \M} \dr_{\text{re}}(w)  \int \jb{DF(w),v }  \mu_w(dv)$$

\noi
for a family of explicit Gaussian measures $ \mu_w$. 
Further discussion and another central limit theorem is given in Theorem \ref{THM:3} below.


We emphasize that the expansion \eqref{exp00} is of completely different nature than asymptotic expansions in terms of the coupling constant $\lambda>0$ in perturbative quantum field theory, see also Subsection \ref{SUBSUB:per} below.

In contrast to the one-dimensional case, in two spatial dimensions, the Gaussian measure $\mu_\varepsilon$ is not concentrated on a space of functions, but just on distributions. Due to this singularity, the proof presented in this work relies on a novel combination of the arguments developed for the non-singular case in Ellis-Rosen \cite{ER1} with techniques from singular SPDEs, and renormalization \cite{BG,Hairer}. In particular, this entails the derivation of a quantified Varadhan lemma by a direct use of the variational method \cite{BG}, the renormalization of Fredholm determinants appearing in the change of variables in Gaussian measures, the use of the variational method to prove the convergence of the expansion in the ultraviolet limit, and the exponential concentration of mass on the enhanced model space. For a more detailed account of the proof see Subsection  \ref{sec:structure_proof} below.





\subsection{Main results}

In this subsection, we introduce the main results. Prior to that, we provide a review of the construction of the $\Phi^4_2$-measure and relevant notations. We start by introducing the renormalization of the $\Phi^4_2$-measure. This is standard material and details can be found, for example, in \cite{PS, OTh}.

The (massive) Gaussian free field $\mu$ is the Gaussian measure on $\mathcal{D}'(\T^2)$ with covariance
\begin{align}
&\E_{\mu}\big[ \jb{f,\phi} \jb{g,\phi} \big] =\jb{f, (1-\Dl)^{-1}g}_{L^2(\T^2)},
\end{align}

\noi
which is formally given by
\begin{align*}
d\mu(\phi)=Z^{-1}\exp\Big\{-\frac 1{2} \big\langle(1-\Dl)\phi,\phi\big\rangle_{L^2(\T^2)}  \Big\} \prod_{x\in \T^2} d\phi(x).
\end{align*} 

\noi
The Gaussian free field $\mu$ on $\T^2$ can be understood as follows. Let $u(x;\o)$ be the following Gaussian Fourier series
\begin{align}
\o\in \O \longmapsto u(x;\o)=\sum_{n \in \Z^2} \frac{g_{n}(\o) }{\jb{n}}e^{ i n\cdot x} \in \mathcal{D}'(\T^2),
\label{RFOU}
\end{align}

\noi
where $\jb{\cdot}=(1+|\cdot|^2)^{\frac 12}$ and $\{ g_n \}_{n \in \Z^2}$ is a sequence of mutually independent standard complex-valued Gaussian random variables on a probability space $(\O,\F,\PP)$ 
conditioned on  $g_{-n} = \cj{g_n}$ for all $n \in \Z^2$. By denoting the law of a random variable $X$ by $\Law(X)$  (with respect to the underlying probability measure $\PP$), we have
\begin{align*}
\Law_{\PP} (u) = \mu
\end{align*}

\noi
for $u$ in \eqref{RFOU}. Note that  $\Law (u) = \mu$ is supported on $H^{s}(\T^2)$
for $s < 0$ but not for $s \geq 0$ and more generally in $B^s_{p,q}(\T^2)$ for any $1 \le p, q \le \infty$ and $s < 0$. Given $N\in \N$, we define the frequency projector $\P_N $ onto the frequencies $\{ |n|\le N\}$ as follows
\begin{align}
\P_N f=\sum_{|n|\le N}\ft f(n) e^{in\cdot x}.
\label{defpr}
\end{align}

\noi
We set $f_N:=\P_N f$. In this paper, we face the following ultraviolet (small-scale) problems. For any $\phi$ under the free field $\mu$ and each fixed $x \in \T^2$, $\phi_{N}(x)$ is a mean-zero Gaussian random fields with 
\begin{align}
c_N=\E_{\mu}\Big[|\phi_{N}(x)|^2\Big] &= \sum _{\substack{|n| \le N }} \frac{1}{\jb{n}^2}=\<tadpole>_{N}
\sim \log N \too \infty, \label{tadpole1}
\end{align}

\noi
as $N \to \infty$. Note that $\<tadpole>_{N}$ is  independent of $x\in \T^2$ due to the stationarity of the Gaussian free field $\mu$. We define the Wick powers $:\! \phi_{N}^2 \!:, :\! \phi_{N}^3 \!: $, and $:\! \phi_{N}^4 \!: $ as follows 
\begin{align}
:\! \phi_{N}^2 \!: \, &= \phi_{N}^2 - \<tadpole>_{N} \label{Wickqu}\\
:\! \phi_{N}^3 \!: \, &= \phi^3_{N} -\<tadpole>_{N} \phi_{N} \label{Wickcu}\\
:\! \phi_{N}^4 \!: \, &= \phi^4_{N} -6\<tadpole>_{N} \phi_{N}^2+3\<tadpole>_N^2 \label{Wickquar}
\end{align}

\noi
which implies that every divergent contribution is removed from the expressions so that the limit of \eqref{Wickqu} ,\eqref{Wickcu}, and \eqref{Wickquar} under $\mu$ exist. Therefore, we study the subsequent renormalized interaction potential
\begin{align}
\mathcal{V}_N(\phi)  = \mathcal{V}_{1,N}(\phi)  + \mathcal{V}_{2,N}(\phi)
\label{POTRE}
\end{align}

\noi
where the contracted graphs causing divergent contributions are excluded during the renormalization process, that is,
\begin{align}
\mathcal{V}_{1,N}(\phi):&=\frac 14 \int_{\T^2} :\! \phi_N^4  \!: dx  -\frac 12 \int_{\T^2} :\! \phi_N^2  \!:  dx +\frac 14 \notag \\
\mathcal{V}_{2,N}(\phi):&=-\frac 1{2} \int_{\T^2} :\! \phi_N^2 \!: dx. 
\label{renpote}
\end{align}

\noi
Then, for any given $\eps>0$, we define the truncated and renormalized $\Phi^4_{2}$-measure by
\begin{align}
d\rho_{\eps,N}(\phi)&=Z_{N,\eps}^{-1} \exp\Big\{ -\frac{1}{\eps}\mathcal{V}_N(\phi)   \Big\} \mu_\eps(d\phi),
\label{renormea}
\end{align}

\noi
where $\mu_\eps$ is the Gaussian free field with covariance operator $\eps(1-\Dl)^{-1}$ and  the partition function $Z_{N, \eps}$ is given by
\begin{align}
Z_{N,\eps}=\int \exp\Big\{-\frac{1}{\eps}\mathcal{V}_N(\phi)   \Big\} \mu_\eps(d\phi).
\label{Part1}
\end{align}

\noi
Then, $\rho_{\eps, N}$ is a finite-dimensional approximation of the $\Phi^4_2$-measure. 

\begin{remark}\rm
For technical considerations, we employ a massive Gaussian free field as our reference measure. This involves introducing mass into the covariance $(1-\Delta)^{-1}$ to address the problem of zeroth Fourier mode degeneracy. To maintain consistency with \eqref{Ham0}, we subtract $\frac{1}{2}\int_{\mathbb{T}^2} \phi^2 dx$ from $\mathcal{V}(\phi)$ as in \eqref{POTRE}.
\end{remark}

\begin{proposition}[Nelson \cite{Nelson}]\label{PROP:NEL}
For every $\eps>0$ and $1 \le p<\infty$, the renormalized potential $\mathcal{V}_{N} $  in \eqref{POTRE} converges to some limit $\mathcal{V}$ in $L^p(d\mu_\eps)$ as $N\to \infty$. Moreover, the truncated $\Phi^4_{2}$-measure $\rho_{\eps,N}$ in \eqref{renormea} converges weakly to a non-Gaussian measure $\rho_\eps$ on $\mathcal{D}'(\T^2)$  as $N\to \infty$, given by
\begin{align}
d\rho_{\eps}(\phi)&=Z_{\eps}^{-1} \exp\Big\{ -\frac{1}{\eps}\mathcal{V}(\phi)   \Big\} \mu_\eps(d\phi)
\label{Phi2nN}
\end{align}
	
\noi
where $Z_{N,\eps} \to Z_{\eps}$ as $N \to \infty$ and 
\begin{align}
Z_{\eps}= \int \exp\Big\{-\frac{1}{\eps}\mathcal{V}(\phi)   \Big\} \mu_\eps(d\phi).
\label{PART12}
\end{align}

\noi
Furthermore, the limiting $\Phi^4_{2}$-measure $\rho_\eps$ is equivalent to the log-correlated Gaussian  field~$\mu_\eps$.

\end{proposition}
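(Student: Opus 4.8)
The plan is to run the classical argument, adapted to the logarithmic covariance: first establish $L^p(\mu_\eps)$-convergence of the renormalized potential, then prove a uniform-in-$N$ exponential integrability bound for $\exp(-\eps^{-1}\mathcal{V}_N)$ (Nelson's estimate), and finally read off convergence of $Z_{N,\eps}$, convergence of $\rho_{\eps,N}$, and equivalence with $\mu_\eps$ by soft arguments. For the first step, fix $k\in\{2,4\}$ (any fixed $k\ge 1$ is handled the same way). The random variable $\int_{\T^2}:\!\phi_N^k\!:\,dx$ is the zeroth Fourier coefficient of the $k$-th Wick power and lies in the $k$-th homogeneous Wiener chaos over $\mu_\eps$; expanding in Fourier modes and applying Wick's theorem gives
\begin{align*}
\E_{\mu_\eps}\Big[\Big(\int_{\T^2}:\!\phi_N^k\!:\,dx\Big)^2\Big]\ \lesssim\ \sum_{\substack{n_1+\cdots+n_k=0\\ |n_i|\le N}}\ \prod_{i=1}^{k}\frac{\eps}{\jb{n_i}^{2}},
\end{align*}
and the series $\sum_{n_1+\cdots+n_k=0}\prod_i\jb{n_i}^{-2}$ converges absolutely: this is the borderline summability in two dimensions, which follows from the standard bound $\sum_{n\in\Z^2}\jb{n}^{-2}\jb{n-m}^{-2}\lesssim \jb{m}^{-2}\log\jb{m}$ iterated over the zero-sum constraint. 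The same computation applied to $\int_{\T^2}(:\!\phi_N^k\!:-:\!\phi_M^k\!:)\,dx$ bounds its $L^2$-norm by the tail of this series over multi-indices with $\max_i|n_i|>M$, which tends to $0$ by dominated convergence; hence $\{\int :\!\phi_N^k\!:\,dx\}_N$ is Cauchy in $L^2(\mu_\eps)$. Only absolute convergence of explicit series is used, so no boundedness of the disk multiplier $\P_N$ on $L^p$ is needed. By Nelson's hypercontractivity (equivalence of all $L^p$-norms on a fixed finite sum of chaoses), the convergence upgrades to every $L^p(\mu_\eps)$, $1\le p<\infty$, which defines $\mathcal{V}$ and proves $\mathcal{V}_N\to\mathcal{V}$ in $L^p(\mu_\eps)$.

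The crux --- and the only genuinely hard step --- is the uniform exponential bound
\begin{align*}
\sup_{N\in\N}\ \E_{\mu_\eps}\Big[\exp\Big(-\frac{q}{\eps}\mathcal{V}_N(\phi)\Big)\Big]<\infty\qquad\text{for some (in fact every) }q\in[1,\infty).
\end{align*}
The naive deterministic lower bound, obtained by completing the square in $\phi_N^4-6c_N\phi_N^2$, only gives $\mathcal{V}_N\ge -C\,c_N^2\sim -C(\log N)^2$ and hence the useless estimate $\exp(C\eps^{-1}(\log N)^2)$; genuine probabilistic cancellation is required. I would derive the bound from the Bou\'e--Dupuis variational representation
\begin{align*}
-\log\E_{\mu_\eps}\Big[e^{-\frac{q}{\eps}\mathcal{V}_N(\phi)}\Big]=\inf_{v}\ \E\Big[\frac{q}{\eps}\,\mathcal{V}_N\big(W+I(v)\big)+\frac12\int_0^1\|v_t\|_{L^2(\T^2)}^2\,dt\Big],
\end{align*}
where $W\sim\mu_\eps$ and $I(v)$ is a drift of finite Cameron--Martin energy: after the paraproduct/Wick expansion of $\mathcal{V}_N(W+I(v))$, the positive quartic term $\tfrac14\int(I(v))^4\,dx$ is coercive enough to absorb the renormalized cross terms, the negative quadratic contributions, and the stochastic objects built from $W$, uniformly in $N$, leaving $\inf(\cdots)\ge -C(q,\eps)>-\infty$. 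This is the ``direct use of the variational method'' alluded to in the introduction; Nelson's original hypercontractive multiscale argument is an alternative.

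Given these two ingredients, the rest is routine. By Step 1 a subsequence of $\mathcal{V}_N$ converges $\mu_\eps$-a.s.\ to $\mathcal{V}$, so $e^{-\eps^{-1}\mathcal{V}_N}\to e^{-\eps^{-1}\mathcal{V}}$ in $\mu_\eps$-probability (hence along the full sequence, by uniqueness of the limit), while Step 2 with any $q>1$ makes $\{e^{-\eps^{-1}\mathcal{V}_N}\}_N$ uniformly integrable; by Vitali's theorem the convergence is in $L^1(\mu_\eps)$. In particular $Z_{N,\eps}\to Z_\eps:=\E_{\mu_\eps}[e^{-\eps^{-1}\mathcal{V}}]$, with $Z_\eps\in(0,\infty)$ since the integrand is strictly positive $\mu_\eps$-a.s.\ (as $\mathcal{V}<\infty$ a.s.) and $L^1$-bounded. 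Consequently the densities $Z_{N,\eps}^{-1}e^{-\eps^{-1}\mathcal{V}_N}$ of $\rho_{\eps,N}$ with respect to $\mu_\eps$ converge to $Z_\eps^{-1}e^{-\eps^{-1}\mathcal{V}}$ in $L^1(\mu_\eps)$, so $\rho_{\eps,N}\to\rho_\eps$ in total variation --- in particular weakly on $\mathcal{D}'(\T^2)$ --- where $\rho_\eps$ is the measure \eqref{Phi2nN}. Finally, $\rho_\eps$ has a density with respect to $\mu_\eps$ that is strictly positive $\mu_\eps$-a.s., so any $\rho_\eps$-null set is $\mu_\eps$-null and vice versa; hence $\rho_\eps$ and $\mu_\eps$ are equivalent. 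The main obstacle is Step 2; Steps 1 and 3 are standard once the chaos estimates and the Nelson bound are in hand.
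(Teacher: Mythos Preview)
The paper does not actually prove Proposition~\ref{PROP:NEL}: it is stated as a classical result attributed to Nelson, with the comment ``This is standard material and details can be found, for example, in \cite{PS, OTh}.'' So there is no proof in the paper to compare against.

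Your proposal is correct and follows the standard modern route. A few remarks on the comparison with what the paper does elsewhere. Your Step~1 is the usual chaos estimate and matches Lemma~\ref{LEM:Cor00}(i)--(ii) in spirit. Your Step~2, the Nelson bound via Bou\'e--Dupuis, is exactly the variational method the paper itself uses later (Lemma~\ref{LEM:var3}, and the computations in Proposition~\ref{PROP:Var} and Lemma~\ref{LEM:UNIF}): expand $\mathcal{V}_N(\<1>_N+\Theta_N)$ by the binomial formula for Wick powers, control the cross terms by Lemma~\ref{LEM:Dr1}, and use the coercive pair $\frac{q}{4\eps}\|\Theta_N\|_{L^4}^4+\frac12\|\Theta\|_{H^1}^2$ together with Young's inequality $\|\Theta_N\|_{L^2}^2\le C_\eps+\tfrac{1}{100}\|\Theta_N\|_{L^4}^4$ to absorb the negative quadratic contribution; the resulting lower bound depends on $q,\eps$ but not on $N$. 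Nelson's original argument \cite{Nelson} uses hypercontractivity plus a Chebyshev-type tail bound on $\{\mathcal{V}_N\le -t\}$ rather than the variational formula, so your approach is the Barashkov--Gubinelli alternative rather than the historical one, but both are accepted and your choice is consistent with the rest of the paper. Step~3 (Vitali plus strict positivity of the density) is the standard soft conclusion, as in \cite[Remark 1.3 and Proposition 1.2]{OTh}.
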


We are now ready to present the main results of this work.


\begin{theorem}[Asymptotic expansion]\label{THM:1}
Let $k \ge 1$, $\eta>0$, and let $F$ be a function that is $k+1$-times continuously differentiable on $C^{-\eta}(\T^2)$, with derivatives exhibiting at most polynomial growth rate. Then, the $\Phi^4_{2}$-measure allows an asymptotic expansion in terms of the temperature parameter $\eps$, that is,  there exists $\eps_0>0 $ and a sequence $\{a_j\}_{j=1}^k$ depending on $F$ such that 
\begin{align}
\int F(\phi)\exp\Big\{-\frac 1\eps \mathcal{V}(\phi)  \Big\} \mu_\eps(d\phi)
= \sum_{j=0}^ka_j \eps^{\frac{j}{2}}+O(\eps^{\frac{k+1}{2}})
\label{asymexp}
\end{align}

\noi
for every $0<\eps\le \eps_0$.


The coefficients $a_j$, are explicitly given by
\begin{align*}
a_j=\sum_{w\in \M}   \dr_{\textup{re}}(w)\frac{1}{j!}\int Q_j(w,v)  \mu_{w}(dv),
\end{align*}
\noi
see \eqref{coeff}, where $\{\mu_w\}_{w \in \M}$ is a family of new Gaussian measures with covariances  $\big(\nb^{(2)} H(w)\big)^{-1}=(-\Dl+\nb^{(2)}V(w))^{-1}$, $\dr_{\text{re}}(w)$ is the Carleman–Fredholm determinant,  and $Q_j(w,v)$ is the $j$-th Fr\'echet derivative $D^{(j)}G(w,0)(v,\dots,v)$ of a new density $G(w,v)$ of the $\Phi^4_2$-measure with respect to the Gaussian measure $\mu_w$, 
\begin{align*}
G(w, \sqrt{\eps}v)&=F(w+\sqrt{\eps}v ) \exp\bigg\{-\frac \eps4 \int_{\T^2}  :\! v^4 \!:_w  dx+\frac{3\eps c}{2} \int_{\T^2} :\! v^2 \!:_w dx \\
&\hphantom{XXXXXXXXXX}-\sqrt{\eps}\int_{\T^2}  :\! v^3 \!:_w  \cdot w dx+ 3\sqrt{\eps}c \int_{\T^2} v  wdx   \bigg\},
\end{align*}

\noi
for some constant $c\in \R$, see \eqref{W11}. Here, the Wick powers are taken with respect to the Gaussian measures $\mu_w$.

\end{theorem}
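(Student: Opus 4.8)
The plan is to adapt the Laplace-type argument of Ellis--Rosen \cite{ER1} to the singular two-dimensional setting, working throughout with the finite-dimensional approximations $\rho_{\eps,N}$ and passing to the ultraviolet limit $N\to\infty$ only at the end, fusing the classical scheme with the variational method of \cite{BG} and renormalization. \emph{Step 1 (localization near the minimizers).} First I would use that $H$ in \eqref{Ham0} attains its minimum exactly on the finite set $\M$ of constant fields $w$ with $w^2=1$, that $H|_{\M}=0$, and that the Hessian $\nb^{(2)}H(w)=-\Dl+\nb^{(2)}V(w)$ is strictly positive. The analytic input is an exponential concentration estimate: for every neighbourhood $\U$ of $\M$ in $C^{-\eta}(\T^2)$ (or of its enhancement) there is $\kappa>0$, uniform in $N$, with $\int_{\{\phi\notin\U\}}(1+\|\phi\|_{C^{-\eta}})^{M}\,e^{-\mathcal{V}(\phi)/\eps}\,\mu_\eps(d\phi)\les e^{-\kappa/\eps}$ for each fixed polynomial exponent $M$. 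This follows from the Bou\'e--Dupuis representation of $-\eps\log\int e^{-\mathcal{V}/\eps}\,d\mu_\eps$, which reduces the bound to a quantitative coercivity estimate for the renormalized energy on the complement of $\U$ --- a quantified Varadhan lemma. Since $F$ and its derivatives grow at most polynomially, this localizes the left-hand side of \eqref{asymexp} to a finite sum, over $w\in\M$, of integrals over small neighbourhoods of each $w$, modulo an $e^{-\kappa/\eps}$ error that is negligible against every power $\eps^{(k+1)/2}$.

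\emph{Step 2 (change of variables $\phi=w+\sqrt\eps v$).} On the neighbourhood of each $w\in\M$ I would perform the shift-and-dilation $\phi=w+\sqrt\eps v$. Because $w$ is a critical point of $H$, the term linear in $v$ cancels; the Gaussian measure $\mu_\eps$ with covariance $\eps(1-\Dl)^{-1}$, after absorbing the Hessian $\nb^{(2)}V(w)$ into the quadratic form via a Cameron--Martin/Girsanov shift, turns into the Gaussian measure $\mu_w$ with covariance $(-\Dl+\nb^{(2)}V(w))^{-1}$. The associated Radon--Nikodym factor is a ratio of infinite-dimensional determinants carrying a logarithmic ultraviolet divergence, which after subtracting the divergent trace converges to the Carleman--Fredholm determinant $\dr_{\textup{re}}(w)$. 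Simultaneously the Wick powers $:\!\phi_N^k\!:$ (renormalized with respect to $\mu_\eps$) have to be re-expanded around $w$ in terms of the Wick powers $:\!v^k\!:_w$ (renormalized with respect to $\mu_w$); the mismatch of renormalization constants is the origin of the constant $c$ and of the explicit lower-order terms of $G(w,\sqrt\eps v)$, while the $v$-independent pieces assemble, after renormalization, into $e^{-H(w)/\eps}=1$. Thus the localized integral equals $\dr_{\textup{re},N}(w)\int G_N(w,\sqrt\eps v)\,\mu_w(dv)$ up to the error from Step 1.

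\emph{Step 3 (Taylor expansion and integration against $\mu_w$).} Since $F$ is $(k+1)$-times continuously Fr\'echet differentiable on $C^{-\eta}(\T^2)$ with polynomially bounded derivatives, the map $\eps\mapsto G(w,\sqrt\eps v)$ is $(k+1)$-times differentiable at $0$ with $j$-th derivative $Q_j(w,v)=D^{(j)}G(w,0)(v,\dots,v)$, and Taylor's formula gives $G(w,\sqrt\eps v)=\sum_{j=0}^k\frac{\eps^{j/2}}{j!}Q_j(w,v)+R_k(w,\sqrt\eps v)$ with $|R_k(w,\sqrt\eps v)|\les\eps^{(k+1)/2}(1+\|v\|_{C^{-\eta}})^{M}$ for a suitable $M$. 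Integrating against $\mu_w$ and using Nelson-type exponential integrability of the Wick powers --- which makes $\int e^{-\frac\eps4\int_{\T^2}:\!v^4\!:_w\,dx+\cdots}(1+\|v\|_{C^{-\eta}})^{M}\,\mu_w(dv)$ finite uniformly for small $\eps$ --- yields $\int G(w,\sqrt\eps v)\,\mu_w(dv)=\sum_{j=0}^k\frac{\eps^{j/2}}{j!}\int Q_j(w,v)\,\mu_w(dv)+O(\eps^{(k+1)/2})$. Summing over $w\in\M$ with the weights $\dr_{\textup{re}}(w)$ gives \eqref{asymexp} with the asserted coefficients $a_j$. The remaining point is uniformity in $N$: each of $\dr_{\textup{re},N}(w)$, $G_N(w,\cdot)$, the moment and exponential-integrability bounds, the concentration estimate of Step 1 and the Taylor remainder must be controlled with $N$-independent constants that keep the precise $\eps$-dependence, which is again done through the variational formula, now to pass to the ultraviolet limit, together with the renormalization of the Fredholm determinants.

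\emph{Main obstacle.} I expect the hard parts to be precisely the two places where the two-dimensional singularity forces a genuine departure from the classical Ellis--Rosen argument: (i) the quantified Varadhan lemma of Step 1, i.e.\ exponential concentration with the sharp rate in the negative-regularity topology $C^{-\eta}$, where the renormalized energy is unbounded below before renormalization and only lower semicontinuous, so that coercivity has to be extracted by a careful variational analysis; and (ii) the uniform-in-$N$ renormalization and convergence of the Carleman--Fredholm determinants $\dr_{\textup{re},N}(w)$ together with the re-expansion of the Wick powers around $w$, since these encode the change of Gaussian measure and its ultraviolet divergence at the same time, and must match so that all divergent constants cancel in the limit.
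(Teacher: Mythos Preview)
Your overall strategy matches the paper's closely: the quantified Varadhan lemma via Bou\'e--Dupuis (your Step~1 is Proposition~\ref{PROP:Var}), the shift $\phi=w+\sqrt\eps\,v$ with Cameron--Martin and the Carleman--Fredholm renormalization of the determinant (your Step~2 is Lemmas~\ref{LEM:expN}--\ref{LEM:redet}), and the Taylor expansion of $G$ followed by integration against $\mu_w$ (your Step~3 is Lemma~\ref{LEM:Remain}). Your identification of the two main obstacles is also accurate.

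There is, however, a genuine gap in Step~3. The claimed remainder bound $|R_k(w,\sqrt\eps\,v)|\les\eps^{(k+1)/2}(1+\|v\|_{C^{-\eta}})^{M}$ is false in the singular setting. The Lagrange form of the Taylor remainder of $G(w,\sqrt\eps\,v)=F(w+\sqrt\eps\,v)\exp\{-\tfrac{\eps}{4}\H_4-\sqrt\eps\,\H_3\cdot w\}$ carries the factor $\exp\{\tfrac{\eps}{4}|\H_4|+\sqrt\eps\,|\H_3|\}$ evaluated at an intermediate point, and in two dimensions $\H_3,\H_4$ are built from the Wick powers $:\!v^3\!:_w,\,:\!v^4\!:_w$, which are \emph{not} controlled by any power of $\|v\|_{C^{-\eta}}$: they are independent coordinates of the enhanced data. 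Nelson/Fernique only gives $\E_{\mu_w}\exp\{\alpha\|:\!v^k\!:_w\|_{C^{-\eta}}^{2/k}\}<\infty$, so the direct integral $\E_{\mu_w}\exp\{\sqrt\eps\,|\int:\!v^3\!:_w\,dx|\}$ with a linear exponent diverges for every $\eps>0$, and your remainder is not $\mu_w$-integrable as stated.

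The paper closes this gap by an additional localization \emph{in model space} (Subsection~\ref{SUBSEC:CMM}): one introduces the norm $\VVert{\mathbb{V}_w}=\sum_{k=1}^4\|:\!v^k\!:_w\|_{C^{-\eta}}^{1/k}$, first restricts the integral to $\{\VVert{\mathbb{V}[\sqrt\eps\,v]}<\delta\}$ (Lemma~\ref{LEM:out1}, combining Fernique on the enhanced space, Lemma~\ref{LEM:Fern}, with the uniform exponential integrability of the density, Lemma~\ref{LEM:UNIF}), and on that set exploits the homogeneity of Wick powers to write, e.g., $\sqrt\eps\,\big|\int:\!v^3\!:_w\,dx\big|\le\big|\int:\!(\sqrt\eps\,v)^3\!:_w\,dx\big|^{1/3}\big|\int:\!v^3\!:_w\,dx\big|^{2/3}\le\delta\,\VVert{\mathbb{V}[v]}^{2}$. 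This converts the bad linear exponent into the $2/k$ exponent required by Fernique, at the cost of the small factor $\delta$, and yields the correct bound $|R_{k+1}|\les\eps^{(k+1)/2}\exp\{c\delta\,\VVert{\mathbb{V}[v]}^2\}$, which is integrable for $\delta$ small. Without this step the $O(\eps^{(k+1)/2})$ error in \eqref{asymexp} is unjustified.
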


The inclusion of observables $F$ with polynomial growth rates in \eqref{asymexp} is motivated, for example, from applications to synchronization by noise for the scalar-valued dynamical $\Phi^4_2$-model \eqref{NLH}, see Subsection \ref{SUBSUB:SYN} below.


Following the proof of Theorem \ref{THM:1}, it is possible to also derive the corresponding laws of large numbers (Theorem \ref{THM:2}) and central limit theorems (Theorem \ref{THM:3}). We point out that, while the asymptotic expansion \eqref{expa001} in Theorem \ref{THM:1} requires the assumption that $F$ is $(k+1)$ times Fréchet differentiable, the subsequent theorems only require that $F$ is a continuous functional.

\begin{theorem}[Law of large numbers]\label{THM:2}
Let $F$ be a continuous functional on $C^{-\eta}(\T^2)$, with having at most polynomial growth rate. Then, there exists $\{b(w)\}_{w\in \M}$ such that 
\begin{align*}
\lim_{\eps \to 0}Z^{-1}_\eps\int F(\phi)\exp\Big\{ -\frac{1}{\eps} \mathcal{V}(\phi)   \Big\}  \mu_\eps(d\phi)=\sum_{w\in \M} b(w) F(w)
\end{align*}

\noi
as $\eps \to 0$, where
\begin{align}
b(w):=\frac{ \dr_{\textup{re}}(w)}{\sum_{\bar w\in \M}  \dr_{\textup{re}}(\bar w)}
\label{BW0a}
\end{align}

\noi
and $\dr_{\textup{re}}(w)$ is the Carleman–Fredholm (renormalized) determinant from Lemma \ref{LEM:redet} and \eqref{REDET0}.

\end{theorem}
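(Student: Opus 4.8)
The plan is to deduce the law of large numbers directly from the asymptotic expansion in Theorem \ref{THM:1}, reducing the case of a merely continuous (polynomially growing) functional $F$ to the smooth case by an approximation argument. First, observe that by Theorem \ref{THM:1} applied with $k=1$ to the constant functional $F\equiv 1$, we obtain
\begin{align}
Z_\eps = \int \exp\Big\{-\tfrac1\eps\mathcal{V}(\phi)\Big\}\mu_\eps(d\phi) = \sum_{w\in\M}\dr_{\textup{re}}(w) + O(\sqrt\eps),
\label{PARTEXP}
\end{align}
so in particular $Z_\eps \to \sum_{\bar w\in\M}\dr_{\textup{re}}(\bar w) > 0$ (positivity being part of the content of Lemma \ref{LEM:redet}). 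Hence the normalization in the statement is well-defined for small $\eps$, and it suffices to prove
\begin{align}
\lim_{\eps\to0}\int F(\phi)\exp\Big\{-\tfrac1\eps\mathcal{V}(\phi)\Big\}\mu_\eps(d\phi) = \sum_{w\in\M}\dr_{\textup{re}}(w)F(w);
\label{LLNGOAL}
\end{align}
dividing by \eqref{PARTEXP} then yields the claimed formula with $b(w)$ as in \eqref{BW0a}.

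The second step handles smooth $F$: if $F$ is $2$-times continuously differentiable on $C^{-\eta}(\T^2)$ with polynomially growing derivatives, then Theorem \ref{THM:1} with $k=1$ gives \eqref{LLNGOAL} directly, since the leading coefficient is $a_0 = \sum_{w\in\M}\dr_{\textup{re}}(w)\,Q_0(w,v)\big|_{v} = \sum_{w\in\M}\dr_{\textup{re}}(w)F(w)$ (note $Q_0(w,v)=G(w,0)=F(w)$, which is $v$-independent, so the integral against $\mu_w$ is simply $F(w)$). The third step removes the smoothness assumption. Given $F$ merely continuous with $|F(\phi)|\lesssim 1+\|\phi\|_{C^{-\eta}}^m$, we approximate: fix a large radius $R$ and a smooth cutoff, and mollify $F$ on the ball $\{\|\phi\|_{C^{-\eta'}}\le R\}$ (for $\eta'<\eta$, using the compact embedding $C^{-\eta'}\embeds C^{-\eta}$) to get $F_\dl$ smooth with $\sup_{\|\phi\|_{C^{-\eta'}}\le R}|F(\phi)-F_\dl(\phi)|\le\dl$ and polynomial growth controlled uniformly. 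One then splits
\begin{align}
\int F\, e^{-\frac1\eps\mathcal{V}}d\mu_\eps
&= \int F_\dl\, e^{-\frac1\eps\mathcal{V}}d\mu_\eps
+ \int (F-F_\dl)\ind_{\{\|\phi\|\le R\}} e^{-\frac1\eps\mathcal{V}}d\mu_\eps \notag\\
&\qquad + \int (F-F_\dl)\ind_{\{\|\phi\|> R\}} e^{-\frac1\eps\mathcal{V}}d\mu_\eps.
\label{LLNSPLIT}
\end{align}
The first term converges to $\sum_w\dr_{\textup{re}}(w)F_\dl(w)$ by Step 2, which is within $C\dl$ of the target once $R$ is large enough that all minimizers $w\in\M$ lie in the ball (recall $\M$ is a finite set of smooth functions, so $\|w\|_{C^{-\eta'}}$ is bounded). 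The second term is bounded by $\dl\int e^{-\frac1\eps\mathcal{V}}d\mu_\eps = \dl\,Z_\eps = O(\dl)$. The third term is the crux.

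The main obstacle is the tail term in \eqref{LLNSPLIT}: controlling $\int (F-F_\dl)\ind_{\{\|\phi\|_{C^{-\eta'}}>R\}}e^{-\frac1\eps\mathcal{V}}d\mu_\eps$ uniformly in small $\eps$. This requires an exponential concentration estimate showing that, under the weighted measure, mass escaping to large $C^{-\eta'}$-norm is negligible even after multiplying by the polynomial weight $1+\|\phi\|^m$. Concretely, one needs a bound of the form $\int \ind_{\{\|\phi\|_{C^{-\eta'}}>R\}}(1+\|\phi\|_{C^{-\eta}}^{m})e^{-\frac1\eps\mathcal{V}(\phi)}\mu_\eps(d\phi) \le C e^{-cR^2/\eps}$ or at least $\le \kappa(R)$ with $\kappa(R)\to0$ as $R\to\infty$ uniformly in $\eps\le\eps_0$; this is exactly the ``exponential concentration of mass on the enhanced model space'' advertised in the introduction (Subsection \ref{sec:structure_proof}), proved via the Boué–Dupuis variational formula \cite{BG} applied to the drift-shifted field, combined with the large-deviation lower bound $\mathcal{V}(\phi)\ge -C$ and Gaussian hypercontractivity to absorb the polynomial factor. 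Granting this estimate, choosing first $R$ large (to make the tail and the $F_\dl$-error small) and then letting $\eps\to0$ closes the argument; since $\dl$ was arbitrary, \eqref{LLNGOAL} follows.
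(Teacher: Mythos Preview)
Your approach is correct in outline but takes a genuinely different route from the paper. The paper does \emph{not} reduce to Theorem~\ref{THM:1} or approximate $F$ by smooth functionals. Instead, it observes that the key intermediate results---Proposition~\ref{PROP:Var}, Lemma~\ref{LEM:LNT}, and Lemma~\ref{LEM:out1}---already hold for continuous $F$ of polynomial growth (no differentiability is used in their proofs). These give directly
\[
\int F\,e^{-\mathcal{V}/\eps}\,d\mu_\eps
= O\big(e^{-c/\eps}\big) + \sum_{w\in\M}\dr_{\textup{re}}(w)\int_{\{\VVert{\mathbb{V}[\sqrt\eps v]}<\dl\}} F(w+\sqrt\eps v)\,e^{-\frac\eps4\H_4-\sqrt\eps\H_3\cdot w}\,\mu_w(dv),
\]
and the paper then passes to the limit $\eps\to0$ by dominated convergence: on the set $\{\VVert{\mathbb{V}[\sqrt\eps v]}<\dl\}$ the estimates \eqref{W3}--\eqref{W5} bound the integrand by $\exp\{c\dl\VVert{\mathbb{V}[v]}^2\}$, which is $\mu_w$-integrable by Fernique's theorem (Lemma~\ref{LEM:Fern}) for $\dl$ small. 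The same computation with $F\equiv1$ handles $Z_\eps$.

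What each approach buys: your argument is modular---it treats Theorem~\ref{THM:1} as a black box and upgrades regularity by approximation---but the approximation step in infinite dimensions is nontrivial (you need, say, cylindrical polynomials via Stone--Weierstrass on the compact $C^{-\eta'}$-ball embedded in $C^{-\eta}$, and then to check these approximants are $C^2$ on $C^{-\eta}$ with polynomially growing derivatives; this works but deserves a line of justification). Also, your tail estimate is really Proposition~\ref{PROP:Var} applied with $\eta'$ in place of $\eta$ (large $C^{-\eta'}$-norm implies $\textup{dist}_{C^{-\eta'}}(\phi,\M)$ is large), not the ``enhanced model space'' concentration of Lemma~\ref{LEM:out1}; these are different statements. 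The paper's route is shorter and sidesteps both issues by going straight to dominated convergence under $\mu_w$.
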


Theorem \ref{THM:2} shows that $\Phi^4_{2}$-measure converges in distribution to the probability measure $\sum_{w\in \M}   b(w) \dl_w$ concentrated on the minima $\M$ with masses $b(w) \in [0,1]$. Namely, if $X_n$ is a random variable whose law is given by $\Phi^4_{2}$-measure with temperature $\frac{1}{n}$, then $X_n$ converges weakly to $X$ whose law is $\sum_{w\in \M}b(w) \dl_w $.

We next study the fluctuations of $\Phi^4_{2}$-measure  around its limiting distribution $\sum_{w\in \M}b(w) \dl_w $. The following theorem says that these fluctuations are a mixture of the new Gaussian measures $\mu_w$ for each $w\in \M$. It is worth noting that for a sufficiently small $\delta > 0$, we can associate a unique closest point $\pi(\phi)$ in $\M$ to any $\phi$ satisfying $\text{dist}(\phi, \M) < \delta$. In cases where $\text{dist}(\phi,\M) \ge \delta$, we set $\pi(\phi) = 0$. Therefore, we can define the projection map $\pi$ onto $\M$.  Given that, according to Theorem \ref{THM:2}, the samples from the $\Phi^4_{2}$-measure converge to $\M$ as $\epsilon\to0$ and so it implies that $(\phi-\pi(\phi))$ tends to be zero under the $\Phi^4_2$-measure as $\eps \to 0$.

\begin{theorem}[Central limit theorem]\label{THM:3}
Let $F$ be a continuous functional on $C^{-\eta}(\T^2)$, with having at most polynomial growth rate. Then, there exists $\{b(w)\}_{w\in \M}$ such that 
\begin{align*}
&\lim_{\eps \to 0}Z^{-1}_\eps\int F\big(\sqrt{\eps}^{-1}(\phi-\pi(\phi) )   \big)\exp\Big\{ -\frac{1}{\eps} \mathcal{V}(\phi)   \Big\}  \mu_\eps(d\phi)\\
&=\sum_{w\in \M}   \int   F(v) b(w)  \mu_w(dv)=\int F(v) \nu(dv)
\end{align*}

\noi
as $\eps \to 0$, where $b(w)$ is as in \eqref{BW0a} and
\begin{align*}
\nu:=\sum_{w\in \M} b(w) \mu_w.
\end{align*}

\end{theorem}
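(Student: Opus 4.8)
The plan is to deduce Theorem \ref{THM:3} as a corollary of the machinery behind Theorem \ref{THM:1}, but adapted to the rescaled observable $F(\sqrt{\eps}^{-1}(\phi-\pi(\phi)))$, which is exactly the object that becomes nondegenerate in the $\eps\to0$ limit. First I would recall the decomposition of the measure near the minimizers: by Theorem \ref{THM:2} (or rather the concentration estimates proved on the way to it), for any fixed $\delta>0$ the mass of $\{\mathrm{dist}(\phi,\M)\ge\delta\}$ under $\exp(-\eps^{-1}\mathcal{V}(\phi))\mu_\eps(d\phi)$ is exponentially small compared to $Z_\eps$; hence up to a negligible error we may restrict the integral to a union of disjoint $\delta$-neighborhoods $\mathcal{N}_w$ of the points $w\in\M$, on which $\pi(\phi)=w$. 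On $\mathcal{N}_w$ I would perform the shift $\phi=w+\sqrt{\eps}\,v$, using the Cameron--Martin/change-of-variables computation already carried out for Theorem \ref{THM:1} (equation \eqref{W11} and the definition of $G(w,\sqrt{\eps}v)$), which rewrites $\exp(-\eps^{-1}\mathcal{V}(\phi))\mu_\eps(d\phi)$, restricted near $w$, as $\dr_{\textup{re}}(w)\,G(w,\sqrt\eps v)\,\mu_w(dv)$ up to lower-order and exponentially small corrections, where $\mu_w$ is the Gaussian measure with covariance $(-\Dl+\nb^{(2)}V(w))^{-1}$. Under this shift the rescaled argument becomes simply $\sqrt\eps^{-1}(\phi-\pi(\phi))=\sqrt\eps^{-1}(\phi-w)=v$, so the integrand turns into $F(v)$ times the density $G(w,\sqrt\eps v)$.

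The next step is to pass to the limit $\eps\to0$ inside $\sum_{w\in\M}\dr_{\textup{re}}(w)\int_{\{|v|\le\delta/\sqrt\eps\}}F(v)\,G(w,\sqrt\eps v)\,\mu_w(dv)$. Here $G(w,\sqrt\eps v)\to G(w,0)=F(w+0)\cdot 1$ — wait, more carefully: inspecting the formula for $G(w,\sqrt\eps v)$, every term in the exponent carries a positive power of $\sqrt\eps$, so $G(w,\sqrt\eps v)\to 1$ pointwise in $v$ as $\eps\to0$ (the prefactor $F(w+\sqrt\eps v)$ is there only when the observable is $F(\phi)$ itself; in the present theorem the observable has already been split off as $F(v)$, so what multiplies $F(v)$ is just the exponential density, which tends to $1$). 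Thus the limit should be $\sum_{w\in\M}\dr_{\textup{re}}(w)\int F(v)\,\mu_w(dv)$, and after normalizing by $Z_\eps\to\sum_{\bar w\in\M}\dr_{\textup{re}}(\bar w)$ (Proposition \ref{PROP:NEL} together with the $k=0$ case of Theorem \ref{THM:1}) one obtains $\sum_{w\in\M}b(w)\int F(v)\,\mu_w(dv)=\int F(v)\,\nu(dv)$, which is the claim. To justify the interchange of limit and integral I would produce a uniform-in-$\eps$ integrable majorant: the polynomial growth of $F$ combined with the Gaussian integrability of $\mu_w$ and uniform bounds on the moments of $G(w,\sqrt\eps v)$ under $\mu_w$ (the exponent of $G$ being controlled in every $L^p(\mu_w)$ by Nelson-type estimates, as in Proposition \ref{PROP:NEL}) gives domination, so that dominated convergence applies; the truncation $\{|v|\le\delta/\sqrt\eps\}\uparrow$ all of the space causes no trouble for the same reason.

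The main obstacle, and the part that needs the most care, is controlling the error terms uniformly as $\eps\to0$ when $F$ is only continuous rather than smooth. Unlike in Theorem \ref{THM:1}, we cannot Taylor-expand $F$, so the argument cannot rely on differentiability to estimate remainders; instead I would lean on: (i) the exponential concentration of the $\Phi^4_2$-measure on the enhanced model space near $\M$ (the same ingredient cited in the proof sketch for Theorem \ref{THM:1}), to make the contribution of $\{\mathrm{dist}(\phi,\M)\ge\delta\}$ truly negligible after normalization even against a polynomially growing $F$ evaluated at the large argument $\sqrt\eps^{-1}(\phi-\pi(\phi))$; and (ii) continuity of $F$ together with tightness of the laws of $v$ under the shifted measures to upgrade pointwise convergence of the densities to convergence of the integrals. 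A secondary subtlety is that near the boundary of $\mathcal{N}_w$ the argument $\sqrt\eps^{-1}(\phi-w)$ is of size $\delta/\sqrt\eps\to\infty$; one must check that the Gaussian tail of $\mu_w$ (equivalently the quadratic part of $\mathcal{V}$ restored by the shift) beats the polynomial growth of $F$ there — this is exactly where the nondegeneracy of $\nb^{(2)}H(w)=-\Dl+\nb^{(2)}V(w)$, guaranteed by the structure of the double-well potential at its minimizers, enters. Once these uniform estimates are in place, the limit identification is routine and the theorem follows.
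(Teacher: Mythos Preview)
Your overall strategy matches the paper's proof: restrict to neighborhoods of $\M$ via Proposition~\ref{PROP:Var}, shift $\phi=w+\sqrt\eps\,v$ so that the observable becomes $F(v)$, and pass to the limit under $\mu_w$, then normalize by $Z_\eps\to\sum_{\bar w}\dr_{\textup{re}}(\bar w)$.

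The one place where your proposal is less precise than the paper is the justification of dominated convergence. You propose building an $\eps$-independent majorant from uniform $L^p(\mu_w)$ bounds on the density (``Nelson-type estimates''); but $L^p$ bounds alone give uniform integrability, not an explicit dominating function. The paper handles this in a sharper way: it first uses Lemma~\ref{LEM:out1} to restrict from $\{\|\sqrt\eps v\|_{C^{-\eta}}<\dl\}$ to the \emph{enhanced model space} set $\{\VVert{\mathbb V[\sqrt\eps v]}<\dl\}$, losing only an $O(e^{-c\dl^2/\eps})$ error. On this smaller set the exponents in the density satisfy the homogeneity bounds \eqref{W3}--\eqref{W5}, which yield the \emph{$\eps$-free} pointwise majorant
\[
F(v)\exp\Big\{\tfrac{\eps}{4}|\H_4|+\sqrt\eps\,|\H_3|\Big\}\ \les\ \exp\big\{2c\dl\,\VVert{\mathbb V[v]}^2\big\},
\]
and this is $\mu_w$-integrable for $\dl$ small by Fernique's theorem on the model space (Lemma~\ref{LEM:Fern}). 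Dominated convergence then applies directly. You did mention ``exponential concentration on the enhanced model space'' as an ingredient, which is exactly this step; it is the key mechanism, not an auxiliary one, and it is what produces the majorant rather than the $L^p$ estimates. With that clarification your proof goes through and coincides with the paper's.
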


This shows that the fluctuation around the limiting distribution is the $\sum_{w}b(w)$-mixture of Gaussian measures $\mu_w$ in each $w\in \M$. In particular,
Theorem \ref{THM:3} exhibits that if $X_n$ is a random variable whose law is given by $\Phi^4_{2}$-measure with temperature $\frac{1}{n}$, then the law of $\sqrt{n}\big(X_n-\pi(X_n)\big)$ converges weakly to $\sum_{w\in \M} b(w) \mu_w$.



\subsection{Structure of the proof}\label{sec:structure_proof}

In order to appreciate the challenges arising in the singular setting, let us first recall the strategy of proof of \eqref{exp00} in the non-singular case. Formally speaking, the Laplace principle implies that 
the $\Phi^4_2$-measure concentrates on the minimizers $\{w\}_{w\in \M}$ of the Hamiltonian $H$ with masses $\dr(w)$. 
In first order, one thus obtains the law of large numbers, corresponding to $a_0 = \sum_{w \in \mathcal{M}} \dr(w) F(w)$ in \eqref{exp00}. Going to higher order requires a finer analysis of the fluctuations. Let $w$ be a global minimum point for $H$ in \eqref{Ham0}. This implies $\nb H(w)=0$ and $\nb^2 H(w) \ge 0$. Hence, for $v$ small,
\begin{align}
 H(w+v)=H(w)+\jb{\nb H(w),v}+\frac{1}{2}\jb{\nb^2 H(w)v,v}+\mathcal{E}(w;v),
\label{2ndor}
\end{align}
\noi
where  $H(w)=\nb H(w)=0$ and $\mathcal{E}(w;v)$ is a higher order error term. If $\nb^2 H(w)>0$, then using \eqref{2ndor} in \eqref{ob0} implies that one can expand around a new Gaussian measure $\mu_w$, determined by the covariance operator $\big(\nb^{(2)} H(w) \big)^{-1}=(-\Dl+\nb^{(2)}V(w) )^{-1}$ as follows: if $\phi$ is close to $\M$, namely, $\phi=w+v$, where $w\in \M$ and $v$ has a small size,  then we have, informally,
\begin{align}
& \int F(\phi)\exp\Big\{-\frac 1\eps H(\phi)  \Big\} \prod_{x } d\phi(x) \notag \\
&\approx  \int \int F(w+v)\exp\Big\{ -\frac{1}{\eps}\mathcal{E}(w;v)  \Big\} \exp\Big\{-\frac{1}{2\eps} \jb{\nb^{(2)}H(w)v,v  }     \Big\} \prod_{x} dv(x) \s(dw) \notag \\
&= \int  \int F(w+\sqrt{\eps}v)\exp\Big\{ -\frac{1}{\eps}\mathcal{E}(w;\sqrt{\eps} v)  \Big\}\det(\nb^{(2)}H(w))^{-\frac 12}\mu_\omega(dv) \s(dw), 
\label{URI2}  
\end{align}
\noi
where $\s(dw)$ is the surface measure on $\M$ and we used the change of variable $v\mapsto\sqrt{\eps }v$ in the last line. 
In the non-singular case, this applies for example to the $\Phi^4_1$-measure, and the details of this argument were carried out by Ellis and Rosen in \cite{ER1}. In this case, the sample from the Gaussian free field on $\T$ is a continuous function and no renormalization is required.  

In contrast, in the two-dimensional case $\T^2$ the free field $\mu_\eps$ is not supported on a space of functions anymore, causing the need for renormalization, that is, the subtraction of diverging constants. The first step in the proof of the asymptotic expansion consists in proving the exponential concentration of mass onto the minima of the potential $H$. Since the derivation of this fact from the large deviations principle for the $\Phi^4_2$-measure would be restricted to bounded observables $F$, and would lead to non-quantitative estimates with respect to the parameters of the model, we develop a different and novel argument consisting of a direct use of the variational approach by Barashkov and Gubinelli \cite{BG}, to obtain quantitative estimates on the mass away from the minimizers, see Proposition \ref{PROP:Var} below. 
We recall that the inclusion of polynomially growing observables is important, for example, in the context of synchronization by noise, see Subsection \ref{SUBSUB:SYN} below. 
This direct argument further allows the derivation of a quantified version of Varadhan's lemma including the explicit control of the estimates on parameters of the model like the viscosity and the size of the periodic domain.

Given the concentration of mass on sufficiently small neighborhoods of the minimizers, the proof proceeds by rewriting \eqref{ob0} in form of a perturbation around the manifold of minimizers. See Lemma \ref{LEM:expN}. This relies on a change of variables and the introduction of corresponding new Gaussian measures $\mu_w$, determined by the covariance operator $\big(\nb^{(2)} H(w) \big)^{-1}$, as indicated in \eqref{URI2}. In higher dimensions ($d\ge 2$), this, however, leads to several difficulties: The first key difficulty arises from the divergence of the Fredholm determinant linked to the partition function of the new Gaussian measures $\mu_w$, that is,
\begin{align*}
\det\Big(\Id+(-\Dl)^{-1}\nb^{(2)}V(w)\Big)=\infty,
\end{align*}
\noi
since, unlike in $L^2(\T)$, the operator $\big(\Id+(-\Dl)^{-1}\nb^{(2)}V(w)\big)$ is not of trace class in $L^2(\T^2)$. This difficulty is overcome in the present work by passing to the renormalization of the Fredholm determinant in form of the Carleman–Fredholm determinant
\begin{align*}
\det\big(\Id + (-\Dl)^{-1}\nb^{(2)}V(w) \big)e^{- \text{Tr} \big[ (-\Dl)^{-1}\nb^{(2)}V(w) \big]}.
\end{align*}
\noi
See Lemma \ref{LEM:expN} and \ref{LEM:redet}. Note that in contrast to the Fredholm determinant, the Carleman–Fredholm determinant is well-defined for Hilbert–Schmidt perturbations of the identity, thus going beyond the smaller class of trace-class perturbations. Recall that $(-\Dl+1)^{-1}$ is Hilbert–Schmidt both in two and three dimensions.


The next issue is to show the ultraviolet stability for the new Gibbs type measures
\begin{align}
 \int F(w+\sqrt{\eps}v)\exp\Big\{ -\frac{1}{\eps}\mathcal{E}(w;\sqrt{\eps} v)  \Big\}\det(\nb^{(2)}H(w))^{-\frac 12}\mu_\omega(dv)  \label{NGIB}
\end{align} 
\noi 
arising in \eqref{URI2}. Since the Gaussian measures $\mu_w$ are not supported on a space of functions, we need a small-scale (ultraviolet) cutoff $\textbf{P}_N$ to regularize the field, see  \eqref{defpr} below. This leads to the $N$-dependent expansion 
\begin{align}
\eqref{URI2}=\sum_{j=0}^ka_{j}^N \eps^{\frac{j}{2}}+O_N(\eps^{\frac{k+1}{2}}).
\label{URI1}
\end{align}
\noi
The challenge then is to prove the convergence of each $a_j^N$ as $N\to \infty$, and to estimate the error $O_N(\eps^{\frac{k+1}{2}})$ uniformly in $N$. The first of these is resolved in the present work by invoking again the variational method for each $w\in \mathcal{M}$. 

The remaining difficulty is to estimate $O_N(\eps^{\frac{k+1}{2}})$ uniformly in $N$. The proof of this relies on establishing the concentration of the new Gaussian measures $\mu_w$ with covariance $(-\Dl+\nb^{(2)}V(w))^{-1}$ on model space, see Lemma \ref{LEM:out1}. Specifically, we focus on the enhanced data set comprised of Wick powers, denoted as $\Xi_{w}=\big(\<1>_{w}, \<2>_{w}, \<3>_{w}, \<4>_{w} \big)$, which is associated with the new Gaussian measure $\mu_w$. We prove the  concentration of Gaussian functional integrals on the model space related to $\Xi_w$, by combining both the variational argument (Lemma \ref{LEM:UNIF}) and Fernique’s theorem on the enhanced model space (Lemma \ref{LEM:Fern}). 

The main contribution of this work is to address these issues and, thereby, to extend the asymptotic expansion by Ellis and Rosen \cite{ER1} to the singular setting. 

\subsection{Motivation and comments on the literature}

\subsubsection{Construction of $\Phi^4$ measures}

The construction of $\Phi^4$ measures was a major achievement of constructive quantum field theory in the 20th century. See for example \cite{Nelson, GlimmJ68, GlimmJ73,
GJS76, GJS762, FO76, Imbrie1, Imbrie2, GlimmJ87}. As mentioned above, the measure in two spatial dimensions was constructed by Nelson in \cite{Nelson}. In three spatial dimensions, the initial breakthrough in \cite{GlimmJ73} was achieved by Glimm and Jaffe, with subsequent results emerging in later works such as \cite{BDH, MW17, GHb, BG, CWG, BDlog}. In higher dimensions, there are triviality results showing that the Gibbs measures become Gaussian. For $d\ge 5$, such results were established by Aizenman and Fröhlich \cite{Aiz, Fro82}, while $d=4$ was more recently solved by Aizenman and Duminil-Copin in \cite{ADC}.

\subsubsection{Perturbative quantum field theory}\label{SUBSUB:per}

Asymptotic expansions of the $\Phi^4$-measure with respect to the coupling constant $\ld>0$ play a crucial role in interpreting perturbative quantum field theory, where observables are computed as formal power series in the regime of small coupling constant $\ld>0$ by 
\begin{align}
\int F(\phi) \exp\Big\{-\frac{\ld}{4} \int \big(|\phi|^{2}-1\big)^{2} dx -\frac{1}{2}\int  |\nb \phi |^2 dx  \Big\} \prod_{x} d\phi(x)=\sum_{j=0}^\infty b_j \ld^j.
\label{expal}
\end{align}
\noi
It is well known that this formal expansion is a divergent series for any fixed $\ld>0$, which was proved by Jaffe \cite{Jaffe65}. Even though the expansion is divergent, this perturbative approach is widely employed in other quantum field theories in physics due to its consistency with experiments with extremely high accuracy. This is so-called asymptoticity means that a satisfactory approximation can be achieved by truncating the series after a fixed finite number of terms as $\ld$ approaches zero. Hence, the goal of perturbative theory is to prove that quantum field theories allow for a perturbative expansion, where the coefficients appearing in the expansion \eqref{expal} converge to finite limits as the ultraviolet cutoff is removed. Regarding the $\Phi^4_2$ theory, Dimock \cite{Dimock} showed that the perturbative expansion is asymptotic. In a recent work, Shen-Zhu-Zhu \cite{SZZ0} presented an alternative method relying on singular stochastic PDEs.
The asymptotic nature of the $\Phi^4_3$ theory was also established by Feldman-Osterwalder \cite{FO76}, and an alternative recent proof has been given in Berglund-Klose \cite{BK22}.

We emphasize that the two asymptotic expansions presented in \eqref{exp00} and \eqref{expal} are of completely different nature. Indeed, by rescaling we note that
\begin{align}
&\int F(\phi)\exp\Big\{-\frac 1\eps \mathcal{V}(\phi)  \Big\} \mu_\eps(d\phi) \notag \\
&= \int F(\sqrt{\eps}\phi)\exp\bigg\{-\frac{\eps}{4}\int\big(|\phi|^{2}-\eps^{-1}\big)^{2}dx-\frac{1}{2}\int|\nb\phi|^{2}dx\bigg\}\prod_{x}d\phi(x),
\end{align}
which demonstrates the different nature of the scaling than \eqref{expal}. In particular, the corresponding coefficients of the expansion are different, for example, the leading-order term $b_0$ (when $\lambda=0$) in \eqref{expal} is associated with the Gaussian free field. In contrast, in \eqref{exp00} the leading-order term corresponds to the concentration of the $\Phi^4_2$-measure around the minimizers of Hamiltonian. 


%

\subsubsection{Stochastic quantization}\label{SUBSUB:Quan}
Sampling from $\Phi^4$ measures is crucial for computing observables. Thanks to the ergodicity $\Law(u_\eps(t)) \to \Phi^4$ for $t\to \infty$, where $u_\eps$ is the solution to the dynamical $\Phi^4$ model
\begin{align}
\dt u_\eps -  \Dl  u_\eps + u_\eps^{3}-\infty \cdot u_\eps  =  \sqrt {2\eps}\xi, \qquad (x,t) \in \T^d\times \R_{+},
\label{NLH}
\end{align} samples from the $\Phi^4$-measures can be obtained by sampling from the stochastic PDE. Here, $\eps$ is a positive (temperature) parameter as in \eqref{Gibbs} and $\xi=\xi(x,t)$ denotes the (Gaussian) space-time white noise on $\T^d \times \R_+$. In fact, this was one of the motivations to introduce stochastic quantization of Euclidean quantum field theories by Parisi and Wu in \cite{PW}. 

\noi
The rigorous analysis of \eqref{NLH} and its renormalization has been a notable achievement in recent years \cite{DPD, Hairer, GIP, CC, RZZ, MW17, TsW, GH0, SZZ0}. We further note that the parabolic $\Phi^4$-model \eqref{NLH} also emerges in appropriate continuum limits of Ising-type models \cite{MW17b, GMW23}.

\subsubsection{Large deviations for singular SPDEs}

In the small noise regime $\eps \to 0$, the large deviation principle (LDP) for the family of solutions $\{u_\eps\}_{\eps >0}$ for the dynamical $\Phi^4$ model \eqref{NLH} has been extensively studied. In two and three dimensions, the LDP was proved by Hairer and Weber \cite{HaWeb}, relying on the theory of regularity structures. A feature of the result in \cite{HaWeb} is that the diverging counter terms in \eqref{NLH} are shown to disappear on the level of the large deviation rate function. 

The large deviation principle for the $\Phi^4_2$-measure in infinite volume has been studied by Barashkov-Gubinelli \cite{BG1}, and the LDP for the $\Phi^4_3$-measure on $\T^3$ has been obtained by Barashkov \cite{Bar}, based on the Boué–Dupuis formula as in \cite{BG}, and in Klose-Mayorcas \cite{KM04} via stochastic quantization, respectively. Similar to the dynamical $\Phi^4$ model \eqref{NLH}, the diverging counter terms are shown not to appear in the corresponding large deviations rate function. 

\subsubsection{Synchronization by noise}\label{SUBSUB:SYN} In \cite{GT} the first and third authors established synchronization by noise for the vector-valued dynamical $\Phi^4_1$-model \eqref{NLH} when $\eps \to 0$, based on a small-noise asymptotic expansion formula for the non-singular, vector-valued $\Phi^4_1$-measure. Notably, this requires the treatment of polynomially growing functions $F$ in \eqref{asymexp}, which is a source of significant technical challenges in the present work. 
See \cite[(1.25), (1.26), (1.27)]{GT} for the proof of the synchronization by noise by exploiting the asymptotic expansion of the invariant measure.  In particular, the observable appearing in \cite[(1.25)]{GT} is not bounded, but of polynomial growth. \\
In Theorem \ref{THM:1}, we prove the small-noise asymptotic expansion for the \textit{singular}, scalar-valued $\Phi^4_2$-measure. Hence, by combining the argument in \cite{GT} with the asymptotic expansion in Theorem \ref{THM:1}, we expect that it will be possible to obtain an alternative approach to demonstrate synchronization by noise for the scalar-valued dynamical $\Phi^4_2$-model. This complements the earlier work \cite{GT0} by the first and third authors, which proved synchronization for the scalar-valued dynamical $\Phi^4_2$ model, based on comparison principles. The arguments introduced in the present work do not rely on comparison principles, and, therefore, potentially can be extended to the vector-valued setting, hence representing a stepping stone on the way to synchronization by noise for vector-valued, singular  $\Phi^4_2$-models.


\subsection{Organization of the paper} 
In Section \ref{SEC:Not}, we introduce some notations and preliminary lemmas. 
In Section \ref{SEC:Var}, we prove the quantified version of Varadhan's lemma (Proposition \ref{PROP:Var}) by exploiting the variational formulation of the partition
function (Lemma \ref{LEM:var3}). In Section \ref{SEC:ASYM}, we present an expansion of $\Phi^4_2$-measure around a new Gaussian measure and prove the main results (i.e.~Theorem \ref{THM:1}, \ref{THM:2}, and \ref{THM:3}).

\section{Notations and basic lemmas}
\label{SEC:Not}

When addressing regularities of functions and distributions, we  use $\eta > 0$ to denote a small constant. We usually  suppress the dependence on such $\eta > 0$ in estimates. For $a, b > 0$, $a\lesssim b$  means that
there exists $C>0$ such that $a \leq Cb$. By $a\sim b$, we mean that $a\lesssim b$ and $b \lesssim a$. Regarding space-time functions, we use the following short-hand notation $L^q_TL^r_x$ = $L^q([0, T]; L^r(\T^2))$, etc.

\subsection{Function spaces}
\label{SUBSEC:21}

Let $s \in \R$ and $1 \leq p \leq \infty$. We define the $L^p$-based Sobolev space $W^{s, p}(\T^d)$ by 
\begin{align*}
\| f \|_{W^{s, p}} = \big\| \F^{-1} [\jb{n}^s \ft f(n)] \big\|_{L^p}.
\end{align*}

\noi
When $p = 2$, we have $H^s(\T^d) = W^{s, 2}(\T^d)$.

Let $\phi:\R \to [0, 1]$ be a smooth  bump function supported on $[-\frac{8}{5}, \frac{8}{5}]$ 
and $\phi\equiv 1$ on $\big[-\frac 54, \frac 54\big]$.
For $\xi \in \R^d$, we set $\varphi_0(\xi) = \phi(|\xi|)$
and 
\begin{align}
\varphi_{j}(\xi) = \phi\big(\tfrac{|\xi|}{2^j}\big)-\phi\big(\tfrac{|\xi|}{2^{j-1}}\big)
\label{phi1}
\end{align}

\noi
for $j \in \N$.
Then, for $j \in \Z_{\geq 0} := \N \cup\{0\}$, 
we define  the Littlewood-Paley projector  $\pi_j$ 
as the Fourier multiplier operator with a symbol $\varphi_j$.
Note that we have 
\begin{align*}
\sum_{j = 0}^\infty \varphi_j (\xi) = 1
\end{align*}

\noi
for each $\xi \in \R^d$ and $f = \sum_{j = 0}^\infty \pi_j f$. We next recall the basic properties of the Besov spaces $B^s_{p, q}(\T^d)$
defined by the norm
\begin{equation*}
\| u \|_{B^s_{p,q}} = \Big\| 2^{s j} \| \pi_{j} u \|_{L^p_x} \Big\|_{\l^q_j(\Z_{\geq 0})}.
\end{equation*}

\noi
We denote the H\"older-Besov space by  $\mathcal{C}^s (\T^d)= B^s_{\infty,\infty}(\T^d)$.
Note that the parameter $s$ measures differentiability and $p$ measures integrability. In particular, $H^s (\T^d) = B^s_{2,2}(\T^d)$
and for $s > 0$ and not an integer, $\mathcal{C}^s(\T^d)$ coincides with the classical H\"older spaces $C^s(\T^d)$; see  \cite{Gra2}.

We recall the following basic estimates in Besov spaces, see \cite{BCD}, for example.

\begin{lemma}\label{LEM:KCKON0}
The following estimates hold.

\noi
\textup{(i) (interpolation)} 
Let $s, s_1, s_2 \in \R$ and $p, p_1, p_2 \in (1,\infty)$
such that $s = \ta s_1 + (1-\ta) s_2$ and $\frac 1p = \frac \ta{p_1} + \frac{1-\ta}{p_2}$
for some $0< \ta < 1$.
Then, we have
\begin{equation}
\| u \|_{W^{s,  p}} \les \| u \|_{W^{s_1, p_1}}^\ta \| u \|_{W^{s_2, p_2}}^{1-\ta}.
\label{INT0P}
\end{equation}

\noi
\textup{(ii) (embeddings)}
Let $s_1, s_2 \in \R$ and $p_1, p_2, q_1, q_2 \in [1,\infty]$.
Then, we have
\begin{align} 
\begin{split}
\| u \|_{B^{s_1}_{p_1,q_1}} 
&\les \| u \|_{B^{s_2}_{p_2, q_2}} 
\qquad \text{for $s_1 \leq s_2$, $p_1 \leq p_2$,  and $q_1 \geq q_2$},  \\
\| u \|_{B^{s_1}_{p_1,q_1}} 
&\les \| u \|_{B^{s_2}_{p_1, \infty}}
\qquad \text{for $s_1 < s_2$},\\
\| u \|_{B^0_{p_1, \infty}}
 &  \les  \| u \|_{L^{p_1}}
 \les \| u \|_{B^0_{p_1, 1}}.
\end{split}
\label{embed}
\end{align}

%


\smallskip

\noi
\textup{(iii) (Besov embedding)}
Let $1\leq p_2 \leq p_1 \leq \infty$, $q \in [1,\infty]$,  and  $s_2 \ge s_1 + d \big(\frac{1}{p_2} - \frac{1}{p_1}\big)$. Then, we have
\begin{equation}
 \| u \|_{B^{s_1}_{p_1,q}} \les \| u \|_{B^{s_2}_{p_2,q}}.
\label{BE0}
\end{equation}

\smallskip

\noi
\textup{(iv) (duality)}
Let $s \in \mathbb{R}$
and  $p, p', q, q' \in [1,\infty]$ such that $\frac1p + \frac1{p'} = \frac1q + \frac1{q'} = 1$. Then, we have
\begin{align}
\bigg| \int_{\T^d}  uv \, dx \bigg|
\le \| u \|_{B^{s}_{p,q}} \| v \|_{B^{-s}_{p',q'}},
\label{dual}
\end{align}

\noi
where $\int_{\T^d} u v \, dx$ denotes  the duality pairing between $B^{s}_{p,q}(\T^d)$ and $B^{-s}_{p',q'}(\T^d)$.

\smallskip
	
\noi		
\textup{(v) (fractional Leibniz rule)} 
Let $p, p_1, p_2, p_3, p_4 \in [1,\infty]$ such that 
$\frac1{p_1} + \frac1{p_2} 
= \frac1{p_3} + \frac1{p_4} = \frac 1p$. 
Then, for every $s>0$, we have
\begin{equation}
\| uv \|_{B^{s}_{p,q}} \les  \| u \|_{B^{s}_{p_1,q}}\| v \|_{L^{p_2}} + \| u \|_{L^{p_3}} \| v \|_{B^s_{p_4,q}} .
\label{fprod}
\end{equation}

\end{lemma}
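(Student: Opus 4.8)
The final statement is Lemma \ref{LEM:KCKON0}, a collection of standard Besov space estimates. Here is how I would approach its proof.

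\textbf{Overall strategy.} These are classical facts from Littlewood--Paley theory on the torus, and the cleanest approach is to reduce each item to the corresponding statement on $\R^d$ (or to cite a standard reference such as \cite{BCD} or \cite{Gra2}) and then verify that periodicity causes no essential new difficulty. Concretely, I would fix the Littlewood--Paley decomposition $f = \sum_{j\ge 0}\pi_j f$ from \eqref{phi1} and repeatedly use the two workhorses: (a) \emph{Bernstein's inequalities}, which say that for $f$ with Fourier support in a ball (resp.\ annulus) of radius $\sim 2^j$, one has $\|\pi_j f\|_{L^{p_1}}\lesssim 2^{jd(1/p_2-1/p_1)}\|\pi_j f\|_{L^{p_2}}$ for $p_2\le p_1$ and $\|\partial^\alpha \pi_j f\|_{L^p}\sim 2^{j|\alpha|}\|\pi_j f\|_{L^p}$; and (b) the almost-orthogonality of the blocks $\pi_j$, i.e.\ $\pi_j\pi_k = 0$ unless $|j-k|\le 1$ (or $2$, depending on conventions). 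I would treat the six items essentially independently, in the order (ii), (iii), (iv), (v), (i), since the embeddings are needed implicitly in the others.

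\textbf{Step-by-step.} For \textup{(ii)}, the three embeddings follow directly from the definition of the $B^s_{p,q}$ norm: monotonicity in $q$ of the $\ell^q_j$ norm gives the $q_1\ge q_2$ part; monotonicity in $p$ of the $L^p(\T^d)$ norm on a bounded-measure space gives the $p_1\le p_2$ part; and the $s_1<s_2$ statement comes from the elementary inequality $\|(c_j)\|_{\ell^{q_1}}\lesssim \|(2^{(s_2-s_1)j}c_j)\|_{\ell^\infty}$ via summing a geometric series in $j$. The last line is the case $s=0$. For \textup{(iii)}, the Besov embedding, one applies Bernstein (a) blockwise: $2^{s_1 j}\|\pi_j u\|_{L^{p_1}}\lesssim 2^{s_1 j}2^{jd(1/p_2-1/p_1)}\|\pi_j u\|_{L^{p_2}}\le 2^{s_2 j}\|\pi_j u\|_{L^{p_2}}$ using the hypothesis $s_2\ge s_1+d(1/p_2-1/p_1)$, then take $\ell^q_j$ norms. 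For \textup{(iv)}, the duality pairing, I would expand $\int uv = \sum_{j,k}\int \pi_j u\,\pi_k v$, note that only $|j-k|\le 1$ contribute by almost-orthogonality (after a harmless symbol adjustment), bound each term by H\"older in $x$ with exponents $(p,p')$ and then H\"older in $j$ with exponents $(q,q')$, recognizing the result as $\|u\|_{B^s_{p,q}}\|v\|_{B^{-s}_{p',q'}}$. For \textup{(v)}, the fractional Leibniz / Bony paraproduct estimate, I would decompose $uv = \Pi_{<}(u,v) + \Pi_{<}(v,u) + \Pi_{\sim}(u,v)$ into low-high, high-low, and resonant paraproducts, and estimate each using Bernstein and the support properties of the paraproduct pieces; the low-high term is bounded by $\|u\|_{L^{p_3}}\|v\|_{B^s_{p_4,q}}$ (and symmetrically), while the resonant term, for $s>0$, is summable and controlled by either side. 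Finally \textup{(i)}, the interpolation inequality in Sobolev spaces $W^{s,p}$: since $W^{s,p}$ is defined directly via $\|\F^{-1}[\jb{n}^s\ft f]\|_{L^p}$, I would argue by (complex or real) interpolation of the family of Bessel-potential operators, or more elementarily apply H\"older's inequality in $x$ with exponents $p_1/(\theta p)$ and $p_2/((1-\theta)p)$ to the pointwise identity $|\jb{D}^s f| = |\jb{D}^{s_1}f|^{?}\cdots$ — more honestly, I would just cite the standard Sobolev interpolation result, since it is textbook material.

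\textbf{Main obstacle.} There is no deep obstacle here — the lemma is a compilation of standard results and in the paper it is explicitly introduced with "see \cite{BCD}, for example." The only points requiring a modicum of care are: ensuring the periodic setting (sums over $n\in\Z^d$ rather than integrals over $\R^d$) is handled correctly in Bernstein's inequalities, where one must use that on $\T^d$ the relevant frequency sets contain $\sim 2^{jd}$ lattice points and the kernels $\F^{-1}\varphi_j$ have the expected $L^1$ bounds uniformly in $j$ (this is where the normalization of Lebesgue measure on $\T^2$, fixed earlier in the paper, enters); and, in \textup{(v)}, tracking that the resonant paraproduct genuinely requires $s>0$ for $\ell^q_j$-summability. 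Accordingly, I would present the proof as a sequence of short verifications, flagging the periodic Bernstein inequality as the one lemma to state carefully and otherwise deferring to \cite{BCD,Bg2,Gra2} for the routine details.
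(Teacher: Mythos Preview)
Your proposal is correct and outlines the standard Littlewood--Paley arguments for each item. The paper itself does not give a proof of this lemma at all: it is stated as a collection of well-known facts with the sentence ``We recall the following basic estimates in Besov spaces, see \cite{BCD}, for example,'' and no argument is supplied. So your sketch is already more than what the paper does; if anything, you could simply cite \cite{BCD} (and \cite{Gra2} for the H\"older--Besov identification) and move on, as the authors do.
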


\subsection{Tools from stochastic analysis}

We conclude this section by recalling some lemmas
from stochastic analysis.
See \cite{Bog, Shige} for basic definitions.
Let $(H, B, \mu)$ be an abstract Wiener space, that is, $\mu$ is a Gaussian measure on a separable Banach space $B$, and $H \subset B$ is its Cameron-Martin space.
Given  a complete orthonormal system $\{e_j \}_{ j \in \N} \subset B^*$ of $H^* = H$, 
we  define a polynomial chaos of order
$k$ to be an element of the form $\prod_{j = 1}^\infty H_{k_j}(\jb{x, e_j})$, 
where $x \in B$, $k_j \ne 0$ for only finitely many $j$'s, $k= \sum_{j = 1}^\infty k_j$, 
$H_{k_j}$ is the Hermite polynomial of degree $k_j$, 
and $\jb{\cdot, \cdot} = \vphantom{|}_B \jb{\cdot, \cdot}_{B^*}$ denotes the $B$--$B^*$ duality pairing.
We then 
denote the closure  of 
polynomial chaoses of order $k$ 
under $L^2(B, \mu)$ by $\mathcal{H}_k$.
The element in $\H_k$ 
is called homogeneous  Wiener chaos of order $k$.
We also set
\[ \H_{\leq k} = \bigoplus_{j = 0}^k \H_j\]

\noi
 for $k \in \N$.

Let $L = \Dl -x \cdot \nabla$ be 
 the Ornstein-Uhlenbeck operator. Then, 
it is known that 
any element in $\mathcal H_k$ 
is an eigenfunction of $L$ with eigenvalue $-k$.
Then, as a consequence
of the  hypercontractivity of the Ornstein-Uhlenbeck
semigroup $U(t) = e^{tL}$ due to Nelson \cite{Nelson}, 
we have the following Wiener chaos estimate
\cite[Theorem~I.22]{Simon}.

\begin{lemma}\label{LEM:hyp}
Let $k \in \N$.
Then, we have
\begin{equation*}
\|X \|_{L^p(\O)} \leq (p-1)^\frac{k}{2} \|X\|_{L^2(\O)}
 \end{equation*}
 
\noi
for any $p \geq 2$ and any $X \in \H_{\leq k}$.

\end{lemma}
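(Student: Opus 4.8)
The statement to prove is the hypercontractivity-type bound $\|X\|_{L^p(\O)} \le (p-1)^{k/2}\|X\|_{L^2(\O)}$ for $X \in \H_{\le k}$ and $p \ge 2$. The plan is to deduce this from Nelson's hypercontractivity theorem for the Ornstein--Uhlenbeck semigroup $U(t) = e^{tL}$, which asserts that $U(t): L^q(B,\mu) \to L^{p}(B,\mu)$ is a contraction precisely when $e^{2t} \ge \frac{p-1}{q-1}$. First I would recall the spectral decomposition: by the cited fact that every element of $\H_j$ is an eigenfunction of $L$ with eigenvalue $-j$, any $X \in \H_{\le k}$ has the form $X = \sum_{j=0}^k X_j$ with $X_j \in \H_j$, and hence $U(t)X = \sum_{j=0}^k e^{-jt} X_j$.

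The key trick is to run the semigroup \emph{backwards} in the following sense: fix $p \ge 2$ and choose $t = t(p) \ge 0$ so that $e^{2t} = p - 1$, i.e. $t = \tfrac12 \log(p-1)$. Then Nelson's theorem with $q = 2$ gives that $U(t)$ is a contraction from $L^2(B,\mu)$ to $L^p(B,\mu)$. Now I would apply this contraction not to $X$ itself but to the rescaled element $Y := \sum_{j=0}^k e^{jt} X_j$, which still lies in $\H_{\le k} \subset L^2(B,\mu)$; one checks $U(t) Y = \sum_{j=0}^k X_j = X$. Therefore
\begin{align*}
\|X\|_{L^p(\O)} = \|U(t) Y\|_{L^p(\O)} \le \|Y\|_{L^2(\O)}.
\end{align*}
It then remains to bound $\|Y\|_{L^2}$ in terms of $\|X\|_{L^2}$: since the chaoses $\{\H_j\}$ are mutually orthogonal in $L^2(B,\mu)$, we have $\|Y\|_{L^2}^2 = \sum_{j=0}^k e^{2jt}\|X_j\|_{L^2}^2 \le e^{2kt} \sum_{j=0}^k \|X_j\|_{L^2}^2 = e^{2kt}\|X\|_{L^2}^2$, using $e^{2t} = p-1 \ge 1$ so that $e^{2jt} \le e^{2kt}$ for $0 \le j \le k$. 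Substituting $e^{2kt} = (p-1)^k$ yields $\|X\|_{L^p(\O)} \le (p-1)^{k/2}\|X\|_{L^2(\O)}$, as claimed.

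The only genuine input is Nelson's hypercontractivity theorem, which is quoted (via \cite{Simon, Nelson}); everything else is the elementary orthogonality of Wiener chaoses and the eigenfunction property of $L$. I do not expect a real obstacle here — the one point requiring a little care is the backward-in-time rescaling argument, namely that one applies the contraction $U(t)$ to the auxiliary element $Y$ rather than directly to $X$ (applying $U(t)$ to $X$ would only give the weaker and less useful decay bound). A reader should also note that the estimate is stated with the probabilistic notation $L^p(\O)$, which matches $L^p(B,\mu)$ under the identification of the abstract Wiener space $(H,B,\mu)$ with the underlying probability space; no measurability subtleties arise since $X \in \H_{\le k}$ is by definition an $L^2$ limit of polynomial chaoses.
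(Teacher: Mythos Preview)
Your argument is correct and is exactly the standard derivation the paper has in mind: the paper does not spell out a proof but simply cites \cite[Theorem~I.22]{Simon} and remarks that the estimate follows from Nelson's hypercontractivity of the Ornstein--Uhlenbeck semigroup, which is precisely the input you use together with the eigenfunction property of $L$ on each $\H_j$ and the orthogonality of the chaoses.
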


We recall the following orthogonality
relation for the Hermite polynomials, 
see  \cite[Lemma 1.1.1]{Nua}.

\begin{lemma}\label{LEM:Wick2}
Let $X$ and $Y$ be jointly Gaussian random variables with mean zero 
and variances $\s_X$
and $\s_Y$.
Then, we have 
\begin{align*}
\E\big[ H_k(X; \s_X) H_\l(Y; \s_Y)\big] = \dl_{k\l} k! \big\{\E[ X Y] \big\}^k, 
\end{align*}

\noi
where $H_k (x,\s)$ denotes the Hermite polynomial of degree $k$ with variance parameter $\s$.

\end{lemma}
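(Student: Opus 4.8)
The statement to be proved is the orthogonality relation for Hermite polynomials, Lemma~\ref{LEM:Wick2}. The plan is to reduce everything to the generating-function identity for Hermite polynomials and to bilinearity of the expectation. First I would recall the standard generating function: with $H_k(x;\sigma)$ the degree-$k$ Hermite polynomial adapted to variance $\sigma$ (so that $H_k(x;\sigma)=\sigma^{k/2}H_k(x/\sqrt\sigma;1)$, normalized so that $H_k$ of a centered Gaussian of variance $\sigma$ are orthogonal), one has
\begin{align*}
\sum_{k=0}^\infty \frac{t^k}{k!}H_k(x;\sigma)=\exp\Big(tx-\tfrac{\sigma}{2}t^2\Big).
\end{align*}
Then, for the jointly Gaussian centered pair $(X,Y)$ with variances $\sigma_X,\sigma_Y$ and covariance $\E[XY]$, I would compute the double generating function
\begin{align*}
\E\Big[\exp\Big(sX-\tfrac{\sigma_X}{2}s^2\Big)\exp\Big(tY-\tfrac{\sigma_Y}{2}t^2\Big)\Big]
\end{align*}
by the Gaussian moment-generating formula $\E[\exp(sX+tY)]=\exp\big(\tfrac12(s^2\sigma_X+2st\,\E[XY]+t^2\sigma_Y)\big)$. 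The quadratic terms cancel and one is left with $\exp\big(st\,\E[XY]\big)=\sum_{k\geq 0}\frac{(st)^k}{k!}\big(\E[XY]\big)^k$.

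\textbf{Key steps in order.} (1) State/recall the generating-function identity and the normalization convention for $H_k(\cdot;\sigma)$, making sure it is the one consistent with \eqref{Wickqu}--\eqref{Wickquar} and Lemma~\ref{LEM:hyp}. (2) Expand the left-hand side of the computed expectation as a double power series $\sum_{k,\ell}\frac{s^k t^\ell}{k!\,\ell!}\E[H_k(X;\sigma_X)H_\ell(Y;\sigma_Y)]$, which is legitimate since the Hermite expansions converge in $L^2$ (or, more elementarily, because all the series involved are entire in $s,t$ and one has uniform integrability from the Gaussian tails — a dominated-convergence / Fubini argument). (3) Match coefficients of $s^kt^\ell$ on both sides: the right-hand side $\sum_m \frac{(st)^m}{m!}(\E[XY])^m$ has no $s^kt^\ell$ term unless $k=\ell$, giving the Kronecker $\delta_{k\ell}$; when $k=\ell=m$, comparing $\frac{s^mt^m}{m!\,m!}\E[H_m H_m]$ with $\frac{s^mt^m}{m!}(\E[XY])^m$ yields $\E[H_m(X;\sigma_X)H_m(Y;\sigma_Y)]=m!\,(\E[XY])^m$. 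This is exactly the claimed formula.

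\textbf{Main obstacle.} There is no serious analytic obstacle here — this is a classical identity and the proof is essentially a one-line generating-function computation. The only point requiring a little care is the justification of interchanging the expectation with the (double) infinite sum in step~(2); I would handle this either by noting that the partial sums are polynomials dominated by an integrable Gaussian envelope (so dominated convergence applies), or simply by citing \cite[Lemma 1.1.1]{Nua} as the reference does. A secondary bookkeeping point is to keep the variance-parameter normalization of $H_k(\cdot;\sigma)$ straight throughout, since an inconsistent convention would change the combinatorial constant; once the convention matching the generating function above is fixed, the result follows immediately. Given that the paper is content to cite Nualart for this lemma, I would keep the proof to the short generating-function argument rather than an induction on $k$ via the three-term recurrence, which also works but is longer.
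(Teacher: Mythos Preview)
Your proposal is correct: the generating-function argument you outline is the standard proof of this orthogonality relation, and all the steps (including the justification for interchanging sum and expectation) are handled appropriately. The paper itself does not give a proof but simply cites \cite[Lemma~1.1.1]{Nua}, where precisely this generating-function computation appears, so your approach coincides with the one in the cited reference.
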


\section{Quantified version of Varadhan's lemma}
\label{SEC:Var}
In this section, we prove a quantified version of Varadhan's lemma (Proposition \ref{PROP:Var}), which enables us to  reduce the Gaussian functional integral to  neighborhoods of the manifold $\M$ of minimizers.

\subsection{Bou\'e-Dupuis variational formalism for the Gibbs measure}
\label{SUBSEC:var}

Let $W(t)$ be the cylindrical Wiener process on $L^2(\T^2)$ with respect to the underlying probability measure $\PP$, that is, 
\begin{align}
W(t)
 = \sum_{n \in \Z^2} B_n (t) e^{in\cdot x}, 
\label{W1}
\end{align}

\noi
where
$\{ B_n \}_{n \in \Z^3}$ is defined by 
$B_n(t) = \jb{\xi, \ind_{[0, t]} \cdot e^{in\cdot x}}_{ x, t}$.
Here, $\jb{\cdot, \cdot}_{x, t}$ denotes 
the duality pairing on $\T^2\times \R_{+}$. We  then define a centered Gaussian process $\<1>(t)$
by 
\begin{align}
\<1>(t)
=  \jb{\nabla}^{-1}W(t).
\label{P2}
\end{align}

\noi
Then, 
we have $\Law (\<1>(1)) = \mu$. 
By setting  $\<1>_N = \textbf{P}_N\<1> $, 
we have   $\Law (\<1>_N(1)) = (\textbf{P}_N)_\#\mu$. 
In particular, 
we have  $\E [\<1>_N(1)^2]  \sim \log N$.

Next, let $\Ha$ denote the space of drifts, which are the progressively measurable processes 
 belonging to
$L^2([0,1]; L^2(\T^2))$, $\PP$-almost surely. We next recall  the  Bou\'e-Dupuis variational formula \cite{BD, Ust}; in particular, see \cite[Theorem 7]{Ust} and \cite[Theorem 2]{BG}. See also \cite{OSeoT, Seong, RSTW, Seo1}.

\begin{lemma}\label{LEM:var3}
Let $\<1>(t)
=  \jb{\nabla}^{-1}W(t)$ be as in \eqref{P2}.
Fix $N \in \N$.
Suppose that  $F:C^\infty(\T^2) \to \R$
is measurable such that $\E\big[|F(\<1>_N(1))|^p\big] < \infty$
and $\E\big[|e^{-F(\<1>_N(1))}|^q \big] < \infty$ for some $1 < p, q < \infty$ with $\frac 1p + \frac 1q = 1$.
Then, we have
\begin{align}
- \log \E\Big[e^{-F(\<1>_N(1))}\Big]
= \inf_{\dr \in \mathbb H_a}
\E\bigg[ F( \<1>_N(1) +  \Dr_N(1)) + \frac{1}{2} \int_0^1 \| \dr(t) \|_{L^2_x}^2 dt \bigg], 
\label{BD1}
\end{align}

\noi
where $\Dr(t)$ is defined by 
\begin{align}
\Dr(t) = \int_0^t \jb{\nabla}^{-1} \dr(t') dt'.
\label{P3a}
\end{align}
\end{lemma}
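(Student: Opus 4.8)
The plan is to deduce this finite-dimensional statement from the classical Boué–Dupuis formula for Brownian motion on a Hilbert space, as stated in \cite[Theorem 7]{Ust} or \cite[Theorem 2]{BG}. First I would observe that since $N$ is fixed, the random variable $\<1>_N(1) = \textbf{P}_N \jb{\nabla}^{-1} W(1)$ depends only on the finitely many Fourier modes $\{B_n(1)\}_{|n| \le N}$, and more precisely the process $t \mapsto \<1>_N(t)$ is a Gaussian martingale with values in the finite-dimensional subspace $E_N = \textup{span}\{e^{in\cdot x} : |n| \le N\} \subset L^2(\T^2)$, with covariance structure inherited from $\jb{\nabla}^{-1}$. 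The abstract Boué–Dupuis formula states that for a cylindrical Wiener process $W$ on a Hilbert space $\mathcal{H}$ and a functional $G$ on the path space which is bounded below and satisfies suitable integrability, one has
\begin{align*}
-\log \E\big[e^{-G(W)}\big] = \inf_{v} \E\bigg[G\Big(W + \int_0^\cdot v(t)\,dt\Big) + \frac12 \int_0^1 \|v(t)\|_{\mathcal{H}}^2\,dt\bigg],
\end{align*}
where the infimum is over progressively measurable drifts $v \in L^2([0,1];\mathcal{H})$ a.s.

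The key steps are then: (1) Apply this formula with $\mathcal{H} = L^2(\T^2)$ and the functional $G(W) = F\big(\textbf{P}_N\jb{\nabla}^{-1}W(1)\big)$, which by hypothesis satisfies $\E[|G(W)|^p] < \infty$ and $\E[|e^{-G(W)}|^q] < \infty$; these conditions (together with a standard truncation/approximation argument to reduce to bounded $G$ if the cited theorem requires it) suffice to justify the formula. (2) Identify $W + \int_0^\cdot v(t)\,dt$ pushed through $\textbf{P}_N\jb{\nabla}^{-1}$ with $\<1>_N(1) + \Dr_N(1)$, where $\Dr(t) = \int_0^t \jb{\nabla}^{-1}\dr(t')\,dt'$ as in \eqref{P3a} and $\dr$ ranges over $\Ha$; this is immediate from linearity and the definition of $\Dr_N = \textbf{P}_N\Dr$. (3) Check that the cost term $\frac12\int_0^1 \|\dr(t)\|_{L^2_x}^2\,dt$ is the correct one: since the drift enters $W$ directly (not through $\jb{\nabla}^{-1}$), the penalty is measured in $L^2(\T^2)$, matching the right-hand side of \eqref{BD1}. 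One can also note that only the projection $\textbf{P}_N\dr$ affects the value $F(\<1>_N(1)+\Dr_N(1))$, while dropping the high modes of $\dr$ only decreases the cost, so the infimum over $\Ha$ equals the infimum over drifts supported on $E_N$ — this is a harmless remark but clarifies that \eqref{BD1} is genuinely a finite-dimensional variational problem.

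The main obstacle is purely a matter of \emph{matching hypotheses}: the cited versions of Boué–Dupuis are typically stated for functionals that are bounded (or bounded below) and measurable, whereas here $F$ is only assumed to satisfy the dual integrability bounds $\E[|F|^p] < \infty$ and $\E[|e^{-F}|^q] < \infty$. I would handle this by a standard approximation argument: truncate $F$ to $F_M = \max(\min(F,M),-M)$, apply the formula to each $F_M$, and pass to the limit $M \to \infty$ using monotone/dominated convergence on both sides — the integrability hypotheses are exactly what is needed to control the limit of the left-hand side and to ensure the variational problems on the right converge (here Hölder's inequality with exponents $p,q$ enters to bound $\E[e^{-F}\cdot(\textup{stuff})]$). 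This is routine but is the only place where genuine care is required; everything else is bookkeeping identifying the finite-dimensional Gaussian $\<1>_N(1)$ and its Cameron–Martin shifts with the abstract objects in \cite{Ust, BG}.
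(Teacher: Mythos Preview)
The paper does not supply its own proof of this lemma; it is stated as a known result recalled from the literature, with explicit pointers to \cite[Theorem~7]{Ust} and \cite[Theorem~2]{BG} (and the original \cite{BD}). Your proposal is therefore in line with the paper's treatment: you correctly identify the statement as an instance of the abstract Bou\'e--Dupuis formula applied to the functional $G(W)=F(\textbf{P}_N\jb{\nabla}^{-1}W(1))$, and your remarks on matching the integrability hypotheses via truncation are exactly the kind of routine verification that the cited references absorb.
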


We define the second, third, and fourth Wick powers of $\<1>_N(t)$ as follows
\begin{align}
\<2>_N(t)&=\<1>_N^2(t)-\<tadpole>_N(t) \notag \\
\<3>_N(t)&=\<1>_N^3(t)-3\<tadpole>_N(t) \notag \\
\<4>_N(t)&=\<1>_N^4(t)- 6 \<tadpole>_N(t) \<1>_N^2(t)+3\<tadpole>_N^2(t)
\label{Wick0}
\end{align}

\noi
where $\<tadpole>_N(t)=\E_\PP\big[| \<1>_N(t)|^2 \big] \sim t \log N$. We next recall  a lemma on pathwise regularity estimates  of $(\<1>(t), \<2>(t), \<3>(t), \<4>(t))$ and $\Dr(t)$. 


\begin{lemma}  \label{LEM:Cor00}

\textup{(i)} 
For any finite $p \ge 2$, $t\in [0,1]$, and $\eta>0$, each Wick power in \eqref{Wick0} converges to the limit in $L^p(\O; \mathcal{C}^{-\eta}(\T^2))$ as $N\to \infty$  and also almost surely in $\mathcal{C}^{-\eta}(\T^2)$. Moreover, we have
\begin{equation}
\begin{split}
\E \Big[& \|\<1>_{N}(t)\|_{\mathcal{C}^{ - \eta}}^p+  \| \<2>_{N}(t) \|_{\mathcal{C}^{ - \eta}}^p+ \| \<3>_{N}(t)   \|_{\mathcal{C}^{-\eta}}^p \Big] \les  p^{\frac 32}   <\infty
\end{split}
\label{QS0}
\end{equation}

\noi
uniformly in $N \in \N\cup\{\infty\}$\footnote{When $N=\infty$, it can be understood as the identity operator.} and $t \in [0, 1]$.

\smallskip

\noi
\textup{(ii)} 
For any $N \in \N\cup \{\infty\}$ and $t\in [0,1]$, we have 
\begin{align}
\E \bigg[ \int_{\T^2} \<2>_{N}(1) dx \bigg]
&= 0,\\
\E \bigg[ \int_{\T^2} \<3>_{N}(1) dx \bigg]
&= 0,\\
\E \bigg[ \int_{\T^2} \<4>_{N}(1) dx \bigg]
&= 0.
\end{align}

\smallskip

\noi
\textup{(iii)}
The drift term $ \dr\in \mathbb{H}_a $ has the regularity
of the Cameron-Martin space, that is, for any $\dr\in \mathbb{H}_a $, we have
\begin{align}
\| \Dr(1)\|_{ H^{1}_x}^2 \leq \int_0^1 \|  \dot{\Dr}(t) \|_{H^1_x}^2dt.
\label{CCS5}
\end{align}

\noi
where $\dot{\Dr}(t)=\jb{\nb}^{-1}\dr(t)$.
\end{lemma}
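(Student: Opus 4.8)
I would establish the three parts in order of increasing difficulty. Part (iii) is immediate: writing $\Dr(1)=\int_0^1\dot{\Dr}(t)\,dt$ with $\dot{\Dr}(t)=\jb{\nb}^{-1}\dr(t)\in H^1_x$, the triangle inequality for the Bochner integral in the Hilbert space $H^1_x$, followed by Cauchy--Schwarz in $t\in[0,1]$, gives $\|\Dr(1)\|_{H^1_x}\le\int_0^1\|\dot{\Dr}(t)\|_{H^1_x}\,dt\le\big(\int_0^1\|\dot{\Dr}(t)\|_{H^1_x}^2\,dt\big)^{1/2}$, and squaring yields \eqref{CCS5}.

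For part (ii), I would argue that, at every fixed $x\in\T^2$, the Wick powers $\<2>_N(1,x)$, $\<3>_N(1,x)$, $\<4>_N(1,x)$ each belong to a homogeneous Wiener chaos $\H_k$ of order $k\ge1$, hence are orthogonal to the constants and have vanishing expectation; alternatively this follows from a direct application of Wick's theorem, using that by stationarity of $\<1>_N(1)$ the variance $\E[\<1>_N(1,x)^2]=\<tadpole>_N(1)$ is independent of $x$. Once part (i) supplies the integrability needed to invoke Fubini, one interchanges $\E$ and $\int_{\T^2}$ to conclude that the spatial integral of each of $\<2>_N(1),\<3>_N(1),\<4>_N(1)$ has vanishing expectation.

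The heart of the lemma is part (i). Let $X_N(t)$ denote any of $\<1>_N(t),\<2>_N(t),\<3>_N(t),\<4>_N(t)$ from \eqref{Wick0}, of chaos order $k\in\{1,2,3,4\}$, respectively. The plan has four steps. (a) Bound $\mathcal{C}^{-\eta}=B^{-\eta}_{\infty,\infty}$ by an $L^r$-based Besov norm $B^{-\eta/2}_{r,r}$ for $r$ large depending on $\eta$, using the Besov embeddings in Lemma \ref{LEM:KCKON0}, and reduce to $p$ large by H\"older's inequality in $\O$. (b) Expand the $B^{-\eta/2}_{r,r}$-norm into Littlewood--Paley blocks $\pi_j X_N(t)$ and use Minkowski's inequality in $\O$ together with the Minkowski integral inequality (valid since $p\ge r$) to move $L^p(\O)$ inside $L^r_x$, localizing the estimate to the pointwise quantity $\|\pi_j X_N(t,x)\|_{L^p(\O)}$. (c) Since $\pi_j X_N(t,x)\in\H_{\le k}$, apply the Wiener chaos estimate (Lemma \ref{LEM:hyp}) to bound this by $(p-1)^{k/2}\|\pi_j X_N(t,x)\|_{L^2(\O)}$; the exponent $k/2$, worst for $\<3>$, is precisely what produces the power of $p$ on the right-hand side of \eqref{QS0}. (d) Compute the second moment $\E[|\pi_j X_N(t,x)|^2]$ via the orthogonality relation for Hermite polynomials (Lemma \ref{LEM:Wick2}), reducing it to the lattice convolution sum $t^k\sum_{n_1+\dots+n_k=m}\prod_i\jb{n_i}^{-2}$ weighted by $|\varphi_j(m)|^2$ and summed over $m$; this is $\les 2^{\delta j}$ for arbitrarily small $\delta>0$ (the borderline logarithmic divergences in two dimensions being absorbed into $\delta$), uniformly in $N\in\N\cup\{\infty\}$, uniformly in $x$ by stationarity, and with at most bounded powers of $t\in[0,1]$. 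Choosing $\delta<\eta$ makes the sum over $j$ converge and gives \eqref{QS0}.

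The convergence statements follow by rerunning steps (a)--(d) on the differences $X_N(t)-X_M(t)$ with $N>M$: the restriction of the Fourier support to $|n|>M$ provides a quantitative decay $\les M^{-\theta}$ for some $\theta>0$, so $\{X_N(t)\}_N$ is Cauchy in $L^p(\O;\mathcal{C}^{-\eta})$; summability of this rate along $M=2^\ell$ together with Borel--Cantelli upgrades convergence to almost sure convergence in $\mathcal{C}^{-\eta}$, uniformly in $t\in[0,1]$. I expect the main obstacle to be step (d) --- the uniform-in-$N$ control of the $k$-fold lattice convolution sums --- since this is exactly where the two-dimensional geometry enters: the sums are only logarithmically divergent, the very mechanism forcing the renormalization in \eqref{Wick0}, and the bookkeeping must keep every constant independent of $N$ so that the estimates pass to the limit $N=\infty$.
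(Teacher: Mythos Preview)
Your proposal is correct and matches the paper's approach: the paper omits all details, simply calling parts (i) and (ii) ``a standard computation'' with references to \cite[Proposition 2.3]{OTh2} and \cite[Proposition 2.1]{GKO0}, and attributing (iii) to ``Minkowski's and Cauchy--Schwarz' inequalities'' --- exactly the Besov embedding / Minkowski / hypercontractivity / second-moment scheme you outline for (i), the chaos-orthogonality-plus-Fubini argument you give for (ii), and the Bochner-triangle-plus-Cauchy--Schwarz argument you give for (iii). You have essentially written out what the cited references contain.
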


\begin{proof}
Part (i) and (ii) of Lemma \ref{LEM:Cor00} follow from a standard computation and thus we omit details. See, for example, \cite[Proposition 2.3]{OTh2} and \cite[Proposition 2.1]{GKO0}. As for Part (iii), the estimate \eqref{CCS5} follows from Minkowski’s and Cauchy-Schwarz’ inequalities. See also the proof of Lemma 4.7 in \cite{GOTW} .
\end{proof}

In the following, for simplicity, we denote $\<1>_N(1), \<2>_N(1), \<3>_N(1), \<4>_N(1)$ and $\Dr_N(1)$ by $\<1>_N, \<2>_N, \<3>_N, \<4>_N$, and $\Dr_N$, respectively.

\subsection{Concentration of the $\Phi^4_2$-measure}

In this subsection, we present the proof of a quantified version of Varadhan's lemma by
relying on the Bou\'e-Dupuis formula (Proposition \ref{PROP:Var}).

We first study the optimizers for the specified Hamiltonian 
\begin{align}
H(\phi)=\frac 12\int_{\T^2} |\nb \phi|^2 dx+\frac{\ld}{4} \int_{\T^2} 
\big(|\phi|^2-1\big)^2 dx \label{H2},
\end{align}

\noi
along with its stability property (Lemma \ref{LEM:sta}), where $\ld >0$ measures the strength of the interaction potential\footnote{Since in the subsequent analysis the precise value of $\ld>0 $ does not play any role, we set $\ld = 1$ in the following.}. In particular, we have $\inf_{\phi \in H^1(\T^2)} H(\phi) \ge 0$. We note that the minimum occurs at either $\phi=+1$ or $-1$ and thus $\min_{\phi \in H^1(\T^2)}H(\phi)=0$.

We first study the stability property of minimizers. For future use, let $\M$ be the set of minimum points of $V(u)=\frac 14(|u|^2-1)^2$.

\begin{lemma}\label{LEM:sta}
Given $\dl>0$, there exists $c=c(\dl)>0$ such that for any $\phi \in H^1(\T^2)$ satisfying $\inf_{w \in \M} \|\phi- w \|_{H^1} \ge \dl $, we have
\begin{align}
H(\phi)\geq \inf_{\varphi \in H^1} H(\varphi)+c(\dl)=c(\dl).
\label{sta01}
\end{align}

\end{lemma}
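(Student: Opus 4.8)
The plan is to prove a quantitative coercivity bound for $H$ away from the two minimizers $\M = \{+1, -1\}$, exploiting both the gradient term and the potential term of the Hamiltonian. The key observation is that the statement is a compactness-plus-strict-positivity argument, but we want the constant $c(\dl)$ explicit enough that it depends only on $\dl$; I will argue by a dichotomy on the size of the gradient.

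First I would fix $\phi \in H^1(\T^2)$ with $\inf_{w \in \M}\|\phi - w\|_{H^1} \ge \dl$ and consider two cases. \emph{Case 1: the gradient is not small}, say $\|\nb \phi\|_{L^2}^2 \ge c_1(\dl)$ for a threshold $c_1(\dl)$ to be chosen. Then $H(\phi) \ge \frac12 \|\nb\phi\|_{L^2}^2 \ge \frac12 c_1(\dl)$ since the potential term is nonnegative, and we are done. \emph{Case 2: $\|\nb\phi\|_{L^2}^2 < c_1(\dl)$.} Here I would use the Poincaré (or Gagliardo--Nirenberg / spectral gap) inequality on $\T^2$: writing $\bar\phi = \fint_{\T^2}\phi\,dx$ for the mean, one has $\|\phi - \bar\phi\|_{H^1}^2 \lesssim \|\nb\phi\|_{L^2}^2 < C c_1(\dl)$, so $\phi$ is $H^1$-close to the constant $\bar\phi$. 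Since $\phi$ is far from both $+1$ and $-1$ in $H^1$, by the triangle inequality the constant $\bar\phi$ must be bounded away from $\pm 1$: specifically $|\bar\phi - 1|$ and $|\bar\phi + 1|$ are both $\ge \dl - C\sqrt{c_1(\dl)} \ge \dl/2$ once $c_1(\dl)$ is chosen small enough (here I use that the $H^1$-norm of a constant on $\T^2$ is comparable to its absolute value, up to the $(2\pi)^{-2}$ normalization). Now I estimate the potential: $\textbf{V}(\phi) = \frac14\int_{\T^2}(|\phi|^2 - 1)^2\,dx$. Splitting $\phi = \bar\phi + (\phi - \bar\phi)$ and using $\|\phi - \bar\phi\|_{L^2}$ small together with $\|\phi - \bar\phi\|_{L^4} \lesssim \|\phi-\bar\phi\|_{H^1}$ (Sobolev embedding $H^1(\T^2) \embeds L^4(\T^2)$), one shows $\textbf{V}(\phi) \ge \frac14(|\bar\phi|^2 - 1)^2 - C'\sqrt{c_1(\dl)} \ge \frac18(\dl/2)^4 - C'\sqrt{c_1(\dl)}$, where the last step uses $\min(|\bar\phi - 1|, |\bar\phi+1|) \ge \dl/2$ to bound $(|\bar\phi|^2-1)^2 = (|\bar\phi|-1)^2(|\bar\phi|+1)^2$ from below by a positive constant depending only on $\dl$. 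Choosing $c_1(\dl)$ small enough that $C'\sqrt{c_1(\dl)} \le \frac1{16}(\dl/2)^4$ gives $H(\phi) \ge \textbf{V}(\phi) \ge c(\dl) := \frac1{16}(\dl/2)^4$ in Case 2, and taking $c(\dl) = \min\{\frac12 c_1(\dl), \frac1{16}(\dl/2)^4\}$ handles both cases.

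The main obstacle, or rather the point requiring care, is the quantitative interplay in Case 2 between the two competing terms: one must make the potential lower bound survive the $L^4$-error coming from $\phi - \bar\phi$, which is controlled via $H^1 \embeds L^4$ but whose constant must be tracked against the Poincaré constant so that the threshold $c_1(\dl)$ can actually be chosen consistently. A cleaner alternative that avoids some bookkeeping: argue by contradiction and compactness. If no such $c(\dl)$ existed, there would be a sequence $\phi_n$ with $\inf_{w}\|\phi_n - w\|_{H^1}\ge\dl$ and $H(\phi_n) \to 0$; then $\|\nb\phi_n\|_{L^2}\to 0$ and $\|(|\phi_n|^2-1)^2\|_{L^1}\to 0$, which forces $\phi_n \to \pm1$ in $H^1$ along a subsequence (the gradient bound plus $L^2$-boundedness from the potential gives weak $H^1$-compactness and then the limit must be a constant equal to $\pm1$), contradicting $\inf_w\|\phi_n - w\|_{H^1}\ge\dl$. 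I would present the explicit-constant version since it is self-contained and the paper elsewhere emphasizes quantitative control, but either route closes the proof.
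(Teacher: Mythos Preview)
Your contradiction/compactness alternative at the end is precisely the paper's proof: take a minimizing sequence $\varphi_n$ with $\inf_{w\in\M}\|\varphi_n-w\|_{H^1}>\dl$ and $H(\varphi_n)\to 0$, extract a limit $\psi$ in $H^{1-\eta}$ via Rellich, identify $\psi=\pm1$ from the vanishing of the potential term, and upgrade to $H^1$-convergence using $\|\nabla\varphi_n\|_{L^2}\to 0=\|\nabla\psi\|_{L^2}$ to reach a contradiction. The paper does not pursue any explicit-constant argument, so your dichotomy route is a genuine alternative and would yield a quantitative $c(\dl)$ rather than a pure existence statement.

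There is, however, a gap in Case~2 as written. The claimed bound $\textbf{V}(\phi)\ge\frac14(|\bar\phi|^2-1)^2-C'\sqrt{c_1(\dl)}$ with $C'$ independent of $\bar\phi$ does not follow: expanding $(\phi^2-1)^2$ around the constant $\bar\phi$ produces cross terms of size $\bar\phi^{\,2}\|\phi-\bar\phi\|_{L^2}^2$ and $|\bar\phi|\,\|\phi-\bar\phi\|_{L^3}^3$, and nothing in the hypotheses bounds $|\bar\phi|$. The fix is easy. Either treat large $|\bar\phi|$ as a further subcase (then $\textbf{V}(\bar\phi)\sim\bar\phi^{\,4}$ dominates the $O(\bar\phi^{\,2} c_1)$ error), or---more cleanly---bypass the expansion altogether via Jensen's inequality:
\[
\int_{\T^2}(\phi^2-1)^2\,dx\;\ge\;\Big(\int_{\T^2}\phi^2\,dx-1\Big)^2\;=\;\big(\bar\phi^{\,2}+\|\phi-\bar\phi\|_{L^2}^2-1\big)^2,
\]
which together with $|\bar\phi^{\,2}-1|=|\bar\phi-1|\,|\bar\phi+1|\ge(\dl/2)^2$ and $\|\phi-\bar\phi\|_{L^2}^2\ll(\dl/2)^2$ (from Poincar\'e, once $c_1(\dl)$ is small) gives the desired lower bound uniformly in $\bar\phi$, with no need for the $H^1\hookrightarrow L^4$ embedding at all.
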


\begin{proof}
In order to prove \eqref{sta01}, we proceed by contradiction. Namely, we
suppose that there exists $\dl>0$ such that for every $n\in \N$, there exists $\varphi_n$ with
\begin{align}
\inf_{w\in \M} \| \varphi_n- w \|_{H^1} > \dl
\label{cont}
\end{align}

\noi
but 
\begin{align}
\inf_{\varphi \in H^1} H(\varphi)\ \le H(\varphi_n)<\inf_{\varphi \in H^1} H(\varphi)+\frac 1n,
\label{minimiz}
\end{align}

\noi
which implies that $\{\varphi_n\}$ is a minimizing sequence. Hence, $\{\varphi_n\}$ is bounded in $H^1(\T^2)$ and so thanks to Rellich's Theorem, for all $\eta>0$, there exists $\psi $ in $H^{1-\eta}(\T^2)$ such that up to subsequences, we have
\begin{align}
\|\varphi_n-\psi \|_{H^{1-\eta}} \to 0.
\label{Relli}
\end{align}

\noi
Since $\lim_{n \to \infty}H(\varphi_n)=0$, by taking a further subsequence, we have
\begin{align*}
0=\lim_{n\to \infty }\int_{\T^2} \big(|\varphi_n|^2-1\big)^2 dx \ge \int_{\T^2} \lim_{n\to \infty } \big(|\varphi_n|^2-1\big)^2 dx =\int_{\T^2} \big(|\psi|^2-1 \big)^2 dx\ge 0,
\end{align*}

\noi
which implies that $\psi=1$ or $-1$ almost everywhere. 
Moreover, thanks to $\lim_{n\to \infty} \| \nb \varphi_n \|_{L^2}=0=\| \nb \psi \|_{L^2}$ and \eqref{Relli}, we have 
\begin{align*}
\|\varphi_n-\psi \|_{H^1} \to 0,
\end{align*}
 
\noi
from which $\psi$ becomes a minimizer of $H$. Therefore, there exists $w_0 \in \M$ such that $\psi=w_0$ and
\begin{align*}
0\leftarrow \|\varphi_n-w_0 \|_{H^1} \ge \inf_{w\in \M} \| \varphi_n- w \|_{H^1} > \dl,
\end{align*}

\noi
which makes a contradiction.

\end{proof}

\begin{remark}\rm
The argument in the proof of Lemma \ref{LEM:sta} shows that if $u$ is a minimizer of the Hamiltonian \eqref{H2}, then either $u=1$ or $u=-1$. Indeed, one can set $\varphi_n=u$ for every $n\ge 1$ and so $\varphi_n$ becomes a minimizing sequence. Hence,
we can proceed with the argument. 
\end{remark}



We are now ready to prove the quantified version of Varadhan's lemma.

\begin{proposition}\label{PROP:Var}
Let $\dl>0$ and $F: \mathcal{C}^{-\eta}(\T^2;\R)\to \R$ be a function with at most polynomial growth. Then, there exists $c=c(\dl)>0$ and sufficiently small $\eps_0=\eps_0(\dl)>0$ such that 
\begin{align}
\sup_{\kappa \ge 1}\int_{\{ \textup{dist}_{\mathcal{C}^{-\eta} }(\phi,\mathcal{M}) \ge \dl  \}   } F(\phi) \exp\bigg\{ -\frac 1\eps \mathcal{V}(\phi)\bigg\} \mu_{\eps}^\kappa(d\phi) \les \exp\Big\{ -\frac c{2\eps} \Big\}
\label{VARLEM2}
\end{align}

\noi
for every $0<\eps \le \eps_0$, where $\mu_\eps^\kappa$ is the Gaussian measure with covariance $\eps(-\kappa\Dl+1)^{-1}$ and $\mathcal{V}(\phi)=\mathcal{V}_1(\phi)+\mathcal{V}_2(\phi)$, with
\begin{align}
\mathcal{V}_1(\phi):&=\frac 14 \int_{\T^2} :\! \phi^4  \!: dx  -\frac 12 \int_{\T^2} :\! \phi^2  \!:  dx +\frac 14   \notag \\
\mathcal{V}_2(\phi):&=-\frac 1{2} \int_{\T^2} :\! \phi^2 \!: dx \label{V12},
\end{align}

\noi
and $\M$ is the set of minimum points of $V(u)=\frac 14(|u|^2-1)^2$.   
\end{proposition}

\begin{remark}\rm
In comparison to the proof presented in \cite[Lemma 2.5]{GT}, it is important to note that the coercivity of the potential $\mathcal{V}$ cannot be applied. Namely, the renormalization process destroys the coercive structure of the potential.
\end{remark}

\begin{remark}\rm
The error term $O(e^{-\frac{c}{2\eps}})$ in \eqref{VARLEM2} is uniform in the viscosity coefficient $\kappa$ if $\kappa \ge \kappa_0 > 0$ for arbitrary fixed $\kappa_0 > 0$. For simplicity of notation, we set $\kappa_0 = 1$ in the statement, but the proof holds for every $\kappa_0 > 0$.
\end{remark}

\begin{proof}

To simplify the notation, we let $\kappa = 1$ but the proofs can be extended trivially to arbitrary $\kappa\ge \kappa_0 > 0$. We first observe that if $u$ represents a Gaussian random variable with $\Law(u)=\mu$, applying the linear transformation $u \mapsto \sqrt{\eps} u$, $\sqrt{\eps} u$ yields a
Gaussian random variable with $\Law(\sqrt{\eps}u )=\mu_\eps$. Hence, we get
 \begin{align}
&\int_{  \{   \text{dist}_{C^{-\eta}}(\phi, \M ) \ge \dl    \}     } F( \phi)\exp\bigg\{ -\frac{1}{\eps}\mathcal{V}(\phi) \bigg\} d\mu_\eps(\phi) \notag \\
&=\int_{  \{   \text{dist}_{C^{-\eta}}(\sqrt{\eps}\phi, \M ) \ge \dl    \}   \}    } F(\sqrt{\eps} \phi)\exp\bigg\{ -\frac{1}{\eps}\mathcal{V}(\sqrt{\eps}\phi) \bigg\} d\mu(\phi). \notag 
\end{align}

\noi
We initially split the integral in the following manner
\begin{align}
&\int_{  \{   \text{dist}_{C^{-\eta}}(\sqrt{\eps}\phi, \M ) \ge \dl    \}    } F(\sqrt{\eps} \phi)\exp\bigg\{ -\frac{1}{\eps}\mathcal{V}(\sqrt{\eps}\phi) \bigg\} d\mu(\phi)  \notag \\
&=\int_{  \{   \text{dist}_{C^{-\eta}}(\sqrt{\eps}\phi, \M ) \ge \dl    \} \cap  \{\|\sqrt{\eps}\phi \|_{C^{-\eta}}\le M  \}    } F(\sqrt{\eps} \phi)\exp\bigg\{ -\frac{1}{\eps}\mathcal{V}(\sqrt{\eps}\phi) \bigg\} d\mu(\phi)  \notag  \\
&\hphantom{X}+ \int_{  \{   \text{dist}_{C^{-\eta}}(\sqrt{\eps}\phi, \M ) \ge \dl    \} \cap  \{\|\sqrt{\eps}\phi \|_{C^{-\eta}}> M  \}    } F(\sqrt{\eps} \phi)\exp\bigg\{ -\frac{1}{\eps}\mathcal{V}(\sqrt{\eps}\phi) \bigg\} d\mu(\phi), 
\label{III}
\end{align}

\noi
where $M \ge 1$ will be specified later. Next, we consider the first term in \eqref{III}.
For convenience, we omit the topology $C^{-\eta}$ in $\text{dist}_{C^{-\eta}}$ in the following. Thanks to the polynomial growth of $F$, we have
\begin{align}
&\int_{  \{   \text{dist}(\sqrt{\eps}\phi, \M ) \ge \dl    \} \cap  \{\|\sqrt{\eps}\phi \|_{C^{-\eta}}\le M  \}    } F(\sqrt{\eps} \phi)\exp\bigg\{ -\frac{1}{\eps}\mathcal{V}(\sqrt{\eps}\phi) \bigg\} d\mu(\phi) \notag \\
& \le M^k \int \exp\bigg\{ -\frac{1}{\eps}\mathcal{V}(\sqrt{\eps}\phi)  \ind_{\{
\text{dist}(\sqrt{\eps}\phi, \M ) \ge \dl  \}   } \bigg\} d\mu(\phi)=M^k \wt Z_\eps \notag \\
&=M^k\lim_{N\to \infty} \wt Z_{N,\eps},
\label{limpart}
\end{align}

\noi
where $k$ comes from the polynomial growth of $F$ and  in the last line we used Proposition \ref{PROP:NEL} to guarantee the convergence and
\begin{align*}
\wt Z_{N,\eps}:&=\int \exp\bigg\{ -\frac{1}{\eps}\mathcal{V}(\sqrt{\eps}\phi_N)  \ind_{\{
\text{dist}(\sqrt{\eps}\phi_N, \M ) \ge \dl  \}   } \bigg\} d\mu(\phi)\\
&=\E_{\PP}\Bigg[\exp\bigg\{ -\frac{1}{\eps}\mathcal{V}(\sqrt{\eps}\<1>_N)  \ind_{\{
\text{dist}(\sqrt{\eps}\<1>_N, \M ) \ge \dl  \}   } \bigg\} 
\Bigg].
\end{align*}


\noi
By exploiting the Bou\'e-Dupuis formula (Lemma \ref{LEM:var3}), we have
\begin{align}
-\log \wt Z_{N,\eps}&=\inf_{\dr \in \Ha }\E\bigg[  \frac{1}{\eps}\mathcal{V}(\sqrt{\eps}(\<1>_N+\Dr_N) ) \ind_{\{ \text{dist}(\sqrt{\eps}\<1>_N+\sqrt{\eps}\Dr_N, \M ) \ge \dl  \}   }   +\frac 12 \int_0^1 \| \dot{\Dr}(t) \|_{H^1_x}^2 dt   \bigg] \notag  \\
&=\inf_{\dr \in \Ha }\E\bigg[  \Phi(\Xi_N^\eps, \sqrt{\eps} \Dr_N)  \ind_{\{
\text{dist}(\sqrt{\eps}\<1>_N+\sqrt{\eps}\Dr_N, \M ) \ge \dl  \}   }   +\frac 12 \int_0^1 \| \dot{\Dr}(t) \|_{H^1_x}^2 dt   \bigg],
\label{CCZ0}
\end{align}

\noi
where $\Xi_N^{\eps}=\big(\eps^2\<4>_N, \eps^{\frac 32}\<3>_N, \eps \<2>_N, \sqrt{\eps} \<1>_N \big)$ is the enhanced data set and
\begin{align*}
\Phi(\Xi_N^\eps, \sqrt{\eps} \Dr_N)&=\frac \eps4\int_{\T^2} \<4>_N \,dx + \frac {3\eps}2\int_{\T^2} \<3>_N\, \Dr_N dx +\frac {3\eps}2 \int_{\T^2  } \<2>_N\, \Dr_N^2 dx +\frac {3\eps}{4}\int_{\T^2} \<1>_N \, \Dr_N^3 dx \notag \\
& \hphantom{X}+\frac{\eps}{4} \int_{\T^2} \Dr_N^4 dx-\int_{\T^2} \<2>_N dx-2\int_{\T^2} \<1>_N  \Dr_N dx-\int_{\T^2} \Dr_N^2 dx+\frac 1{4\eps}.
\end{align*}

\noi
Thanks to Lemma \ref{LEM:Dr1} and Lemma \ref{LEM:Cor00}, we have that for every $\zeta>0$ and large enough $p$ 
\begin{align}
&\bigg|\Phi(\Xi_N^\eps, \sqrt{\eps} \Dr_N)-\frac{\eps}{4}\int_{\T^2} \Dr_N^4 dx-\frac{1}{4\eps}\bigg| \notag \\
&  \le  \zeta \frac\eps4 \int_{\T^2} \Dr_N^4 dx+\zeta \| \Dr_N \|^2_{H^1}+\int_{\T^2} \Dr_N^2 dx+O(\|\Xi_N^\eps\|^p)+O(\zeta^{-1}),
\label{CCZ1}
\end{align}

\noi
where $\VVert{\Xi_N^\eps}^p:=\|\eps^2\<4>_N \|_{C^{-\eta}}^p+\|\eps^{\frac 32}\<3>_N \|_{C^{-\eta}}^p+\|\eps \<2>_N \|_{C^{-\eta}}^p+\|\sqrt{\eps} \<1>_N \|_{C^{-\eta}}^p$. By combining \eqref{CCZ0} and \eqref{CCZ1}, we have that for sufficiently small $\zeta>0$
\begin{align}
&\log  \wt Z_{N, \eps} \notag \\
&\le \sup_{\Dr \in \mathcal{H}^1 }\E\bigg[- \Phi(\Xi_N^\eps,  \sqrt{\eps} \Dr_N)  \ind_{\{
\text{dist}(\sqrt{\eps}\<1>_N+\sqrt{\eps}\Dr_N, \M ) \ge \dl  \}   } -\frac{1}{2}\|\Dr \|_{H^1}^2  \bigg]  \notag \\
&\le \sup_{\Dr \in \mathcal{H}^1 }\E\bigg[- \Big( \Phi(\Xi_N^\eps, \sqrt{\eps}  \Dr_N)  +\frac{1}{2} \|  \Dr_N\|_{H^1}^2 \Big) \ind_{\{\text{dist}(\sqrt{\eps}\<1>_N+\sqrt{\eps}\Dr_N, \M ) \ge \dl  \}   } \bigg] \notag \\
&\le \sup_{\Dr \in \mathcal{H}^1 }\E\Bigg[- \bigg( \Big(\frac 14 -\zeta \Big)\eps \int_{\T^2} \Dr_N^4 dx+\Big(\frac 12 -\zeta \Big)\int_{\T^2} |\nb \Dr_N|^2 dx -\Big(\frac 12 +\zeta \Big)\int_{\T^2}  \Dr_N^2 dx+\frac{1}{4\eps} \bigg) \notag \\
&\hphantom{XXXXXXXXX}\times  \ind_{\{
\text{dist}(\sqrt{\eps}\<1>_N+\sqrt{\eps }\Dr_N, \M ) \ge \dl  \}   }  \Bigg]+O(1),
\label{Ia}
\end{align}

\noi
where $\H^1$ represents the collection of drifts $\Dr$, characterized as processes that belong to $H^1(\T^2)$ $\PP$-almost surely (possibly non-adapted) and $O(1)$ arises from Lemma \ref{LEM:Cor00} (i) by computing the expected values of the higher moments for each component of $\Xi_N^{\eps}=\big(\eps^2\<4>_N, \eps^{\frac 32}\<3>_N, \eps \<2>_N, \sqrt{\eps} \<1>_N \big)$ in $C^{-\eta}$, uniformly in $N\ge 1$ and $O(\zeta^{-1})$ in \eqref{CCZ1}. From \eqref{Ia} and  using the change of variable $\Dr \to  \sqrt{\eps}^{-1} \Dr$, we have
\begin{align}
&\log \wt Z_{N,\eps } \notag \\
& \le  \sup_{\Dr \in \mathcal{H}^1 }\E\Bigg[- \bigg( \Big(\frac 14 -\zeta \Big)\frac{1}{\eps} \int_{\T^2} \Dr_N^4 dx+\Big(\frac 12 -\zeta \Big)\frac{1}{\eps} \int_{\T^2} |\nb \Dr_N|^2 dx \notag \\
&\hphantom{XXXXXXXX}-\Big(\frac 12 +\zeta \Big)\frac{1}{\eps}\int_{\T^2}  \Dr_N^2 dx+\frac{1}{4\eps}\bigg) \ind_{\{
\text{dist}(\sqrt{\eps}\<1>_N+\Dr_N, \M ) \ge \dl  \}   }  \Bigg]+O(1) \notag \\
&\le \sup_{\Dr \in \mathcal{H}^1 } \E\Bigg[  -\bigg(  \frac{1}{4\eps}\int_{\T^2} \big(|\Dr_N|^2-1\big)^2 dx -\frac{\zeta}{\eps}\int_{\T^2}\Dr_N^4 dx-\frac{\zeta}{\eps} \int_{\T^2} \Dr_N^2 dx \notag \\
&\hphantom{XXXXXXXX}   +\Big(\frac 12 -\zeta \Big)\frac{1}{\eps} \int_{\T^2} |\nb \Dr_N|^2 dx    \bigg) \ind_{\{
\text{dist}(\sqrt{\eps}\<1>_N+\Dr_N, \M ) \ge \dl  \}   }      \Bigg] +O(1).
\label{Ia2}
\end{align}

\noi
There exists a sufficiently small $\zeta_0>0$ and a large $M_0>0$ such that for every $0< \zeta\le \zeta_0$, we have that $\frac{1}{8}(|a|^2-1)^2 \ge \zeta (|a|^4+|a|^2)$ holds for every $|a| \ge M_0$, which is equivalent to write $\frac{1}{4}(|a|^2-1)^2-\zeta |a|^4-\zeta|a|^2\ge \frac{1}{8}(|a|^2-1)^2$. Hence, we get
\begin{align}
&\frac{1}{4\eps}\int_{\{|\Dr_N|\ge M_0\}}  \big(|\Dr_N|^2-1\big)^2 dx -\frac{\zeta}{\eps}\int_{\{|\Dr_N|\ge M_0\}}\Dr_N^4 dx-\frac{\zeta}{\eps} \int_{\{|\Dr_N|\ge M_0\}} \Dr_N^2 dx \notag \\
&\ge \frac{1}{8\eps}\int_{\{|\Dr_N|\ge M_0\}} \big(|\Dr_N|^2-1\big)^2 dx.
\label{Ia2022}
\end{align}

\noi
Moreover, we write
\begin{align}
&\frac{1}{4\eps}\int_{\{|\Dr_N|\le M_0\}}  \big(|\Dr_N|^2-1\big)^2 dx -\frac{\zeta}{\eps}\int_{\{|\Dr_N|\le M_0\}}\Dr_N^4 dx-\frac{\zeta}{\eps} \int_{\{|\Dr_N|\le M_0\}} \Dr_N^2 dx \notag\\
& \ge \frac{1}{8\eps}\int_{\big\{ |\Dr_N| \le  M_0 \big\}} \big(|\Dr_N|^2-1\big)^2dx-\frac{c_1 \zeta}{\eps}
\label{Ia201}
\end{align}

\noi
for some constant $c_1>0$.
Hence, by combining \eqref{Ia2}, \eqref{Ia2022}, and \eqref{Ia201}, we have
\begin{align}
\log \wt Z_{N,\eps} \le \sup_{\Dr \in \mathcal{H}^1 }\E\bigg[-\frac{c_2}{\eps}H(\Dr_N)\ind_{\{
\text{dist}(\sqrt{\eps}\<1>_N+\Dr_N, \M ) \ge \dl  \}   }   \bigg]+\frac{c_1\zeta}{\eps}+O(1)
\label{Ia202}
\end{align}

\noi
for some constant $c_2>0$, where
\begin{align*}
H(\Dr_N)=\frac 12 \int_{\T^2 } |\nb \Dr_N|^2 dx+\frac 14\int_{\T^2} \big(|\Dr_N|^2-1\big)^2 dx.
\end{align*}

\noi
Also, for sufficiently large $A \ge 1$, there exists $\eps_0=\eps_0(A,\dl)>0$ such that for all $0 < \eps \le \eps_0$,
\begin{align}
\big\{  \| \Dr_N-w \|_{C^{-\eta}} \ge 2 \dl \}&\cap \big\{ \| \<1>_N\|_{C^{-\eta}} \le A \big\} \subset \big\{  \|\sqrt{\eps}\<1>_N+\Dr_N-w \|_{C^{-\eta}} \ge \dl \big\} 
\label{set1}
\end{align}

\noi
for any $w\in \M=\{-1,+1\}$. Hence, it follows from \eqref{Ia202}, \eqref{set1}, and Lemma \ref{LEM:sta} that 
\begin{align}
\log \wt Z_{N,\eps} &\le  \sup_{\Dr \in \mathcal{H}^1 }  \E\bigg[ -\frac{c}{\eps}H(\Dr_N) \ind_{ \{ \text{dist}(\Dr_N,\M) \ge 2\dl   \}\cap \{ \| \<1>_N\|_{C^{-\eta}} \le A  \} }     \bigg]+\frac{c_1\zeta}{\eps}+O(1) \notag \\
& \le - \frac{c}{\eps} \E\bigg[  \inf_{\Dr \in \H^1 } H(\Dr_N)  \ind_{ \{ \text{dist}(\Dr_N,\M) \ge 2\dl   \}\cap \{ \| \<1>_N\|_{C^{-\eta}} \le A  \} }      \bigg]+\frac{c_1\zeta}{\eps} +O(1) \notag \\
& \le -\frac{c(\dl)}{\eps}+\frac{c_1\zeta}{\eps}+O(1),
\label{sta1a}
\end{align}

\noi
uniformly in $N \ge 1$, where $c(\dl)$ comes from Lemma \ref{LEM:sta}. This implies that choosing $\zeta$ small enough gives
\begin{align}
\wt Z_{N,\eps} \les \exp \Big\{-\frac{c(\dl)}{2\eps} \Big\}.
\label{CoPh00}
\end{align}

\noi
Hence, from \eqref{limpart} and \eqref{CoPh00}, we have that there exists sufficiently small $\eps_0=\eps_0(\dl)>0$ such that 
\begin{align}
&\int_{  \{   \text{dist}(\sqrt{\eps}\phi, \M ) \ge \dl    \} \cap  \{\|\sqrt{\eps}\phi \|\le M  \}    } F(\sqrt{\eps} \phi)\exp\bigg\{ -\frac{1}{\eps}\mathcal{V}(\sqrt{\eps}\phi) \bigg\} d\mu(\phi) \notag \\
&\les M^k\exp\Big\{-\frac{c(\dl)}{2\eps}\Big\} \les \exp\Big\{-\frac{c(\dl)}{4\eps}\Big\}
\label{I0}
\end{align}

\noi
for all $0<\eps\le  \eps_0$, where $M$ will be specified later (see \eqref{s2} and \eqref{G22}).

Now, let us next examine the second term in equation \eqref{III}. In the subsequent discussion, we establish the existence of a sufficiently large constant $M \ge 1$ such that
\begin{align*}
\int_{  \{   \text{dist}(\sqrt{\eps}\phi, \M ) \ge \dl    \} \cap  \{\|\sqrt{\eps}\phi \|> M  \}    } F(\sqrt{\eps} \phi)\exp\bigg\{ -\frac{1}{\eps}\mathcal{V}(\sqrt{\eps}\phi) \bigg\} d\mu(\phi) \les  \exp\Big\{ -\frac {c'}{2\eps} \Big\}.
\end{align*}

\noi
Given a sufficiently large $K=K(\eps) \ge 1$ which will be specified later (see \eqref{II}), let $G$ be a bounded smooth non-negative function such that 
\begin{align}
G(\phi)= 
\begin{cases}
K, & \quad \text{if } \quad \|\phi\|_{C^{-\eta}} \leq \frac M2,\\
0, & \quad \text{if } \quad \|\phi\|_{C^{-\eta} }> M.
\end{cases}
\label{T0a}
\end{align}

\noi
The existence of such $G$ is guaranteed by Uryshon’s lemma, as both sets $\{\|\phi \|_{C^{-\eta} } \le \frac{M}{2}  \}$ and $\{\|\phi \|_{C^{-\eta} } \ge M  \}$  are closed. We also note that thanks to the polynomial growth of $F$, we get 
\begin{align}
F(\phi) \les \| \phi\|_{C^{-\eta}}^k \les \exp\{\kappa \|\phi \|_{C^{-\eta}}^2  \}
\label{poa}
\end{align}

\noi
for some $k\ge 1$ and small $\kappa>0$. Hence, by combining \eqref{T0a} and \eqref{poa}, we have
\begin{align}
& \int_{  \{   \text{dist}(\sqrt{\eps}\phi, \M ) \ge \dl    \} \cap  \{\|\sqrt{\eps}\phi \|> M  \}    } F(\sqrt{\eps} \phi)\exp\bigg\{ -\frac{1}{\eps}\mathcal{V}(\sqrt{\eps}\phi) \bigg\} d\mu(\phi)   \notag \\
&\les \int  \exp\bigg\{ -G(\sqrt{\eps}\phi) +\kappa\| \sqrt{\eps}  \phi\|_{C^{-\eta}}^2 -\frac 1\eps \mathcal{V}(\sqrt{\eps}\phi)   \bigg\}=Z_{\eps, G} \notag \\
&=\lim_{N\to \infty} Z_{N,\eps, G},
\label{wtZ}
\end{align}

\noi
where in the last line we used Proposition \ref{PROP:NEL} and
\begin{align*}
Z_{N,\eps,G}:=\int  \exp\bigg\{ -G(\sqrt{\eps}\phi_N) +\kappa\| \sqrt{\eps}  \phi_N\|_{C^{-\eta}}^2 -\frac 1\eps \mathcal{V}(\phi_N)   \bigg\}.
\end{align*}

\noi
Then, by using the Bou\'e-Dupuis variational formula (Lemma \ref{LEM:var3}), we have
\begin{align}
-\log Z_{N,\eps, G}&=\inf_{\dr \in \Ha}\E \bigg[ G(\sqrt{\eps}\<1>_N+\sqrt{\eps} \Dr_N) +\frac 1\eps \mathcal{V}(  \sqrt{\eps}\<1>_N +\sqrt{\eps}\Dr_N )-\kappa \| \sqrt{\eps}\<1>_N +\sqrt{\eps} \Dr_N \|_{C^{-\eta}}^2 \notag\\
&\hphantom{XXXX}  +\frac 12 \int_0^1 \|\nb \dot{\Dr}(t) \|_{L^2_x}^2 dt+\frac 12 \int_0^1 \| \dot{\Dr}(t) \|_{L^2_x}^2 dt  \bigg].
\label{wtZ2}
\end{align}

\noi
It follows from Sobolev's inequality, Chebyshev's inequality and choosing $M\gg 1$ that 
\begin{align}
\PP\bigg\{ \| \sqrt{\eps}\<1>_N+\sqrt{\eps}\Dr_N \|_{C^{-\eta}}>\frac M2 \bigg\}&\le \PP\bigg\{ \|  \sqrt{\eps}\<1>_N\|_{C^{-\eta}} >\frac M4 \bigg\}+ \PP\bigg\{  \| \sqrt{\eps}\Dr_N  \|_{H^1}   >\frac M4 \bigg\} \notag \\
&\le \frac 12 +\frac {16\eps}{M^2} \E \Big[\| \Dr_N \|_{H^1_x}^2  \Big].
\label{s2}
\end{align}

\noi
Then, from \eqref{T0a}, \eqref{s2} and Lemma \ref{LEM:Cor00}, we have
\begin{align}
\E\bigg[ G(\sqrt{\eps}\<1>_N+\Dr_N) \bigg]&\ge K \E\Big[\ind_{ \big\{ \|\sqrt{\eps}\<1>_N+\sqrt{\eps}\Dr_N   \|_{C^{-\eta}} \le \frac M2   \big\} } \Big] \notag \\
&\ge \frac K2-\frac {16 K \eps}{M^2} \E \Big[ \| \Dr_N \|_{H^1_x}^2  \Big] \notag \\
&\ge \frac K2-\frac 14\E\bigg[ \int_0^1 \| \dot{\Dr}(t)\|_{H^1_x}^2 dt \bigg],
\label{G22}
\end{align}

\noi
where we take sufficiently large $M$. Hence, by combining \eqref{wtZ2} and \eqref{G22}, we have 
\begin{align}
-\log Z_{N,\eps, G} &\ge \frac K2+\inf_{\dr \in \Ha}\E\bigg[ \frac 1\eps \mathcal{V}(\sqrt{\eps}\<1>_N +\sqrt{\eps}\Dr_N)-\kappa \| \sqrt{\eps} \<1>_N +\sqrt{\eps}\Dr_N  \|_{C^{-\eta}}^2  +\frac{1}{4} \int_0^1 \| \dot{\Dr}(t)\|_{H^1_x}^2   dt\bigg]   \notag \\ 
&\ge \frac K2 + \inf_{\dr \in \Ha}\E\bigg[  \Phi(\Xi_N^\eps, \sqrt{\eps} \Dr_N)-\kappa \| \sqrt{\eps} \<1>_N +\sqrt{\eps}\Dr_N  \|_{C^{-\eta}}^2   +\frac 14 \int_0^1 \| \dot{\Dr}(t) \|_{H^1_x}^2 dt \bigg],
\label{logwtZ}
\end{align}

\noi
where
\begin{align}
\Phi(\Xi_N^\eps, \sqrt{\eps} \Dr_N)&=\frac \eps4\int_{\T^2} \<4>_N \,dx + \frac {3\eps}2\int_{\T^2} \<3>_N\, \Dr_N dx +\frac {3\eps}2 \int_{\T^2  } \<2>_N\, \Dr_N dx +\frac {3\eps}{4}\int_{\T^2} \<1>_N \, \Dr_N^3 dx \notag \\
& \hphantom{X}+\frac{\eps}{4} \int_{\T^2} \Dr_N^4 dx-\int_{\T^2} \<2>_N dx-2\int_{\T^2} \<1>_N  \Dr_N dx-\int_{\T^2} \Dr_N^2 dx+\frac 1{4\eps}.
\label{pathlow}
\end{align}

\noi
The main strategy is to establish a pathwise lower bound on $\Phi(\Xi_N^\eps, \sqrt{\eps} \Dr_N)$ in~\eqref{pathlow}, uniformly in $\dr \in \Ha$ and $N\ge 1$, by making use of the positive terms
\begin{equation}
\U(\Dr_N) = \frac{\eps}{4} \int_{\T^2} \Dr_N^4 dx +\frac 14 \int_0^1 \| \dot{\Dr}(t) \|_{H^1_x}^2 dt  +\frac 1{4\eps}  
\label{UN0}
\end{equation}

\noi
in \eqref{logwtZ}. From \eqref{pathlow}, Lemma \ref{LEM:Dr1}, and Lemma \ref{LEM:Cor00}, we have that for every $\zeta>0$
\begin{align}
&\E\Bigg[\bigg|\Phi(\Xi_N^\eps, \sqrt{\eps} \Dr_N)-\frac{\eps}{4}\int_{\T^2} \Dr_N^4 dx-\frac{1}{4\eps}\bigg|\Bigg] \notag \\
&  \le  \zeta \frac\eps4 \int_{\T^2} \Dr_N^4 dx+\zeta \| \Dr_N \|^2_{H^1}+\int_{\T^2} \Dr_N^2 dx+O(\zeta^{-1})+O(1)
\label{CCZ110}
\end{align}

\noi
where the term $O(1)$ comes from Lemma \ref{LEM:Cor00} (i) by computing the expectation of  the higher moments for each component of $\Xi_N^{\eps}=\big(\eps^2\<4>_N, \eps^{\frac 32}\<3>_N, \eps \<2>_N, \sqrt{\eps} \<1>_N \big)$ in $C^{-\eta}$.  From Sobolev's and Young's inequalities, we have
\begin{align}
\E\Big[\| \sqrt{\eps} \<1>_N +\sqrt{\eps}\Dr_N  \|_{C^{-\eta}}^2 \Big]&\les 1+\sqrt{\eps} \E\Big[\| \Dr \|_{H^1_x}^2 \Big] \label{Idr0} \\ 
\bigg| \int_{\T^2} \Dr_N^2 dx \bigg| &\le \frac c{\eps}+\frac \eps{100}\|\Dr_N \|_{L^4}^4  \label{Idr}
\end{align}

\noi
for some $c \ge 1$. It follows from \eqref{logwtZ}, \eqref{UN0}, \eqref{CCZ110}, \eqref{Idr0}, \eqref{Idr}, and choosing $\zeta>0$ sufficiently small that 
\begin{align*}
-\log Z_{N,\eps, G} &\ge \frac K2+ \inf_{\dr \in \Ha}\E\bigg[  \Phi(\Xi_N^\eps, \sqrt{\eps} \Dr_N)-\kappa \| \sqrt{\eps} \<1>_N +\sqrt{\eps}\Dr_N  \|_{C^{-\eta}}^2 +\frac 14 \int_0^1 \| \dot{\Dr}(t) \|_{H^1_x}^2 dt     \bigg]\\
&\ge \frac K2-O(1)-\frac c{\eps} +\inf_{\dr \in \Ha}\frac{1}{10}\E\Big[\U(\Dr_N)\Big]   \\
&\ge \frac K2-O(1) -\frac{c}{\eps}
\end{align*}

\noi
uniformly in $N  \ge 1$. Hence, by choosing $M\gg 1$ (from \eqref{s2} and \eqref{G22}) and setting $K=\frac {4c}\eps \gg 1$, we obtain
\begin{align}
-\log Z_{\eps, G}=-\lim_{N\to \infty} \log Z_{N,\eps, G} \ge \frac K2-O(1)-\frac c\eps \ge \frac {c'}{2\eps}
\label{II}
\end{align}

\noi
for some $c'>0$.
Then, \eqref{wtZ} and \eqref{II} imply
\begin{align}
& \int_{  \{   \text{dist}(\sqrt{\eps}\phi, \M ) \ge \dl    \} \cap  \{\|\sqrt{\eps}\phi \|> M  \}    } F(\sqrt{\eps} \phi)\exp\bigg\{ -\frac{1}{\eps}\mathcal{V}(\sqrt{\eps}\phi) \bigg\} d\mu(\phi)   \notag \\
&=\lim_{N\to \infty} Z_{N,\eps, G} \les \exp\Big\{ -\frac {c'}{2\eps} \Big\}.
\label{IIa}
\end{align}

\noi
Combining \eqref{III}, \eqref{I0}, and \eqref{IIa} completes the proof of Proposition \ref{PROP:Var}.

\end{proof}

\begin{remark}\rm\label{REM:UBSY1}
Recall that $F$ is a continuous and unbounded observable with polynomial growth rates.
Since $F$ is a bounded functional on the region $\big\{\|\sqrt{\eps}\phi \|_{C^{-\eta}} \le M \big\}$, in order to obtain the estimate \eqref{I0} for the first part of the right-hand side of \eqref{III}, one can use the LDP for the $\Phi^4_2$-measure with Lemma \ref{LEM:sta}. We, however, point out that when considering the region  $\big\{\|\sqrt{\eps}\phi \|_{C^{-\eta}} > M \big\}$, it is not possible to use the LDP and so necessary to proceed with the argument above to obtain \eqref{IIa}.
\end{remark}

We conclude this subsection by presenting the proof of  Lemma \ref{LEM:Dr1}.
\begin{lemma} \label{LEM:Dr1}
\textup{(i)} Let $\eta>0$. For every $\zeta>0$, there exists $c(\zeta)>0$ such that
\begin{align}
\bigg| \int_{\T^2}  \<3>_{N}  \Dr_Ndx  \bigg|
&\le c(\zeta) \| \<3>_{N} \|_{C^{-\eta}}^2  
+ \zeta
\| \Dr_N\|_{ H^{1}}^2,   
\label{Y1}\\
\bigg| \int_{\T^2}  \<2>_{N} \Dr_N^2 dx \bigg|
&\le c(\zeta) \| \<2>_{N} \|_{C^{-\eta}}^4  
+ \zeta \Big(
\| \Dr_N\|_{ H^{1}}^2 + 
\| \Dr_N \|_{L^4}^4   \Big),   
\label{Y2} \\
\bigg| \int_{\T^2}  \<1>_{N} \Dr_N^3    dx\bigg|
&\le c(\zeta)  \| \<1>_{N}\|_{C^{-\eta}}^{c_0}+ \zeta \Big(
\| \Dr_N\|_{ H^{1}}^2 + 
\| \Dr_N \|_{L^4}^4   \Big), 
\label{Y3}
\end{align}

\noi
for some $c_0>0$ and every $N \in \N\cup \{\infty\} $.

\smallskip

\noi
\textup{(ii)} For every $\zeta>0$, there exists $c(\zeta)>0$ such that
\begin{align}
\bigg| \int_{\T^2}  \<2>_{N} \Dr_N dx \bigg|
&\le c(\zeta) \| \<2>_{N} \|_{C^{-\eta}}^2  
+ \zeta  \| \Dr_N \|_{H^1}^2, 
\label{YY22} \\
\bigg| \int_{\T^2}  \<1>_{N} \Dr_N^2  dx \bigg|
&\le c(\zeta) \| \<1>_{N} \|_{C^{-\eta}}^{c_1} + \zeta \Big(
\| \Dr_N \|_{H^1}^2 +  \| \Dr_N \|_{L^3}^3 \Big),
\label{YY3}
\end{align}

\noi
for some $c_1>0$ and every $N \in \N\cup \{\infty\}$.

\smallskip 

\noi
\textup{(iii)} For every $\zeta>0$, there exists $c(\zeta)>0$ such that
\begin{align}
\bigg| \int_{\T^2}  \<1>_{N} \Dr_N dx \bigg| \le c(\zeta) \|\<1>_N \|_{C^{-\eta}}^2 +\zeta \| \Dr_N \|_{H^1}^2
\label{YYY30}
\end{align}

\noi
for every $N \in \N \cup \{\infty\}$.

\end{lemma}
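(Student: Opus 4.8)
The plan is to prove the three families of estimates in Lemma \ref{LEM:Dr1} by a uniform recipe: use the duality pairing \eqref{dual} to separate the stochastic factor $\<k>_N$ from the drift $\Dr_N$, place $\<k>_N$ in a negative-regularity Besov space $\mathcal{C}^{-\eta}$ and the relevant power of $\Dr_N$ in the dual space $B^{\eta}_{p',q'}$, then interpolate/Sobolev-embed to bring the drift norm down to a combination of $\|\Dr_N\|_{H^1}$ and $\|\Dr_N\|_{L^4}$ (or $L^3$), and finally apply Young's inequality to absorb the small parameter $\zeta$. Throughout one uses that $\Dr_N = \textbf{P}_N \Dr$ so all bounds are uniform in $N$ (including $N=\infty$). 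I would state once at the start that the fractional Leibniz rule \eqref{fprod} and the Besov embedding \eqref{BE0} on $\T^2$ are the only tools needed, and that all implicit constants depend only on $\eta$.

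For \eqref{YYY30}, \eqref{YY22}, and \eqref{Y1} the drift enters linearly, so after \eqref{dual} one needs $\|\Dr_N\|_{B^{\eta}_{2,2}} = \|\Dr_N\|_{H^{\eta}} \lesssim \|\Dr_N\|_{H^1}^{\eta}\|\Dr_N\|_{L^2}^{1-\eta}$ by interpolation \eqref{INT0P}, hence $\lesssim \|\Dr_N\|_{H^1}$ since $H^1 \hookrightarrow L^2$; squaring and applying Young's inequality $ab \le \zeta a^2 + c(\zeta) b^2$ gives the claimed form with exponent $2$ on $\|\<k>_N\|_{\mathcal{C}^{-\eta}}$. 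For the quadratic estimates \eqref{Y2}, \eqref{YY3} one first writes $|\int \<k>_N \Dr_N^2\,dx| \le \|\<k>_N\|_{\mathcal{C}^{-\eta}}\|\Dr_N^2\|_{B^{\eta}_{1,1}}$, then uses the fractional Leibniz rule \eqref{fprod} to get $\|\Dr_N^2\|_{B^{\eta}_{1,1}} \lesssim \|\Dr_N\|_{B^{\eta}_{2,2}}\|\Dr_N\|_{L^2}$, and bounds this by $\|\Dr_N\|_{H^1}\|\Dr_N\|_{L^2} \lesssim \|\Dr_N\|_{H^1}^2$; a more careful accounting routes the extra integrability through $L^4$ to produce the $\|\Dr_N\|_{L^4}^4$ term with its own $\zeta$. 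The cubic estimate \eqref{Y3} is handled the same way with $\|\Dr_N^3\|_{B^{\eta}_{1,1}} \lesssim \|\Dr_N\|_{B^{\eta}_{p_1,2}}\|\Dr_N\|_{L^{p_2}}^2$ and Besov embedding to land everything in $H^1$ and $L^4$, after which Young's inequality with several exponents produces the constant $c_0$. The same scheme with $L^3$ in place of $L^4$ gives \eqref{YY3}.

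The main obstacle, and the only place requiring genuine care, is the bookkeeping of H\"older and Young exponents in the cubic terms \eqref{Y3} and \eqref{YY3}: one must check that the total "budget" of derivatives and integrability carried by $\Dr_N^3$ (respectively $\Dr_N^2$ paired against $\<1>_N$ with the $L^3$ route) can indeed be split so that each factor sits in a space controlled by $\|\Dr_N\|_{H^1}$ or $\|\Dr_N\|_{L^4}$ (resp.\ $\|\Dr_N\|_{L^3}$), using the two-dimensional Besov embedding $B^{\eta}_{p_2,q}\hookrightarrow B^{0}_{p_1,q}$ when $\eta \ge 2(\tfrac1{p_2}-\tfrac1{p_1})$. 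Since $\eta>0$ is fixed and arbitrarily small, there is a small loss of integrability that must be compensated, but the presence of the genuinely positive quartic term $\frac{\eps}{4}\int \Dr_N^4$ in the variational functional (hence the $\|\Dr_N\|_{L^4}^4$ slot on the right-hand side) leaves enough room; one picks, say, $p_2$ slightly below $2$ and $p_1$ slightly above, then interpolates between $H^1$ and $L^4$, and Young's inequality with the conjugate exponents closes the estimate. I would also remark that the exponents $c_0, c_1$ are not sharp and can be taken as any sufficiently large number depending on $\eta$, which removes the need to optimize. The limit $N=\infty$ is immediate since $\Dr_\infty = \Dr \in H^1$ by Lemma \ref{LEM:Cor00}(iii) and all the functional-analytic inequalities hold verbatim without the projector.
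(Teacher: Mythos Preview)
Your proposal is correct and follows essentially the same route as the paper: duality \eqref{dual} against $\mathcal{C}^{-\eta}$, then the fractional Leibniz rule \eqref{fprod} and Besov embeddings/interpolation to reduce the drift factor to a product of $\|\Dr_N\|_{H^1}$ and $\|\Dr_N\|_{L^4}$ (or $L^3$, or $L^2$), and finally Young's inequality. The only point where your write-up is slightly loose is \eqref{Y2}: the bound $\|\<2>_N\|_{\mathcal{C}^{-\eta}}\|\Dr_N\|_{H^1}^2$ that you display first cannot be split by Young into $\zeta\|\Dr_N\|_{H^1}^2$ plus a term in $\|\<2>_N\|$ alone, so the ``more careful accounting through $L^4$'' you mention afterwards is not optional---it is the actual argument. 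The paper makes this explicit by embedding $B^\eta_{1,1}\hookleftarrow B^\eta_{4/3,1}$ and applying \eqref{fprod} with exponents $(2,4)$ to get $\|\Dr_N^2\|_{B^\eta_{1,1}}\lesssim\|\Dr_N\|_{H^1}\|\Dr_N\|_{L^4}$, after which Young with exponents $(4,2,4)$ yields exactly \eqref{Y2}.
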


\begin{proof}

We first prove Part (i). From Young's inequality,  we have
\begin{align}
\begin{split}
\bigg| \int_{\T^2} \<3>_{N} \Dr_N dx \bigg|
&\le  \| \<3>_{N} \|_{C^{-\eta}} \| \Dr_N\|_{B^{\eta}_{1,1} }\le  \| \<3>_{N}\|_{C^{-\eta}} 
\| \Dr_N\|_{H^1}\\
&\le c(\zeta)\| \<3>_{N}\|_{C^{-\eta}}^2 + \zeta \| \Dr_N\|_{H^1}^2. 
\end{split}
\label{ZZZ3}
\end{align}
	
\noi
This yields  \eqref{Y1}. By the fractional Leibniz rule \eqref{fprod}, we have 
\begin{align}
\begin{split}
\bigg| \int_{\T^2}  \<2>_{N} \Dr_N^2 dx \bigg|
&  \le  \| \<2>_{N} \|_{C^{-\eta}}
\|\Dr_N^2\|_{B^{\eta}_{1,1} }\\
& \leq  \| \<2>_{N} \|_{C^{-\eta}} 
\|\Dr_N^2\|_{B^{\eta}_{\frac{4}{3},1 }}\\
& \les  \| \<2>_{N} \|_{C^{-\eta}}\|\Dr_N\|_{H^1}\|\Dr_N\|_{L^4}.
\end{split}
\label{ZZZ4}
\end{align}

\noi
Then, the second estimate \eqref{Y2}
follows from Young's inequality. Lastly, we consider \eqref{Y3}.
By the fractional Leibniz rule \eqref{fprod}, the interpolation inequality  \eqref{INT0P}, and Sobolev's inequality, we have
\begin{align}
\begin{split}
\bigg| \int_{\T^2}    \<1>_N   \Dr_N^3 dx \bigg|
&\le \| \<1>_N \|_{C^{-\eta}} \|   \Dr_N^3\|_{B_{1,1}^\eta  }\\
&\les\| \<1>_N \|_{C^{-\eta}} (\| \Dr_N  \|_{B_{2,1}^\eta} \| \Dr_N^2 \|_{L^2} +\| \Dr_N  \|_{L^2} \| \Dr_N^2 \|_{B_{2,1}^\eta} )   \\
&\les\| \<1>_N \|_{C^{-\eta}} (\| \Dr_N  \|_{B_{2,2}^{2\eta} } \| \Dr_N \|_{L^4}^2 +\| \Dr_N  \|_{L^2} \|\Dr_N \|_{L^4} \| \Dr_N \|_{B_{4,1}^\eta} )\\
&\les\| \<1>_N \|_{C^{-\eta}} 
\| \Dr_N \|_{H^1}^\be \| \Dr_N^0 \|_{L^4}^{3-\be}
\end{split}
\label{ZZZ2a}
\end{align}

\noi
for some $\be > 0$.
Then, the third  estimate \eqref{Y3}
follows from Young's inequality
since $\frac{\be}2 + \frac {3-\be}4 < 1$ for small $\be > 0$.

We now prove Part (ii). The estimate \eqref{YY22} follows
by replacing $\<3>_N$ in \eqref{ZZZ3}
by $\<2>_N$. It follows from 
the fractional Leibniz rule \eqref{fprod} and the interpolation inequality  \eqref{INT0P}
that
\begin{align*}
\bigg| \int_{\T^2}   \<1>_N \Dr_N^2 dx \bigg|
&  \le  \|  \<1>_N\|_{C^{-\eta}}
\|\Dr_N^2\|_{B_{1,1}^\eta}\\
& \leq
 \|  \<1>_N\|_{C^{-\eta}} 
\|\Dr_N\|^2_{H^{2\eta}}\\
& \les  \|  \<1>_N\|_{C^{-\eta}}
\|\Dr_N\|_{H^1}^{\be} \|\Dr_N\|_{L^2}^{2-\be}
\end{align*}

\noi
for some small $\be> 0$.
Then,  the second estimate \eqref{YY3} follows
from Young's inequality since
$\frac{\be} 2 + \frac{2-\be}3 < 1$.

Regarding Part (iii), the estimate \eqref{YYY30} follows
from replacing $\<3>_N$ in \eqref{ZZZ3} by $\<1>_N$.  This completes the proof of Lemma \ref{LEM:Dr1}.
\end{proof}

\section{Asymptotic expansion of the $\Phi^4_2$-measure}
\label{SEC:ASYM}
In this section, we obtain the asymptotic expansion presented in Theorem \ref{THM:1}.

\subsection{Expansion around a new Gaussian measure}
In this subsection, we present an expansion of $\Phi^4_2$-measure around a new Gaussian measure below (see \eqref{new}). Thanks to Proposition \ref{PROP:Var}, we have that for any $\dl>0$, there exists $c=c(\dl)>0$ such that
\begin{align*}
\int_{\{ \textup{dist}_{\mathcal{C}^{-\eta} }(\phi,\mathcal{M}) \ge \dl  \}   } F(\phi) \exp\bigg\{ -\frac 1\eps \mathcal{V}(\phi)\bigg\} d\mu_{\eps}(\phi)\les \exp\Big\{ -\frac c{2\eps} \Big\}.
\end{align*}

\noi
as $\eps \to 0$. In other words, Proposition \ref{PROP:Var} implies that the main contribution of $\int F(\phi) d\rho_\eps(\phi)$ comes from a neighborhood of the minimizers of the Hamiltonian \eqref{Ham0}. Hence, it suffices to consider the contribution on $\{\text{dist}_{C^{-\eta}}(\phi,\M) <\dl  \}$, that is,
\begin{align}
\int_{\{ \textup{dist}_{\mathcal{C}^{-\eta} }(\phi,\mathcal{M}) <\dl  \}   } F(\phi) \exp\bigg\{ -\frac 1\eps \mathcal{V}(\phi)\bigg\} d\mu_{\eps}(\phi).
\label{ins0}
\end{align}

\noi
The idea of performing the expansion is that
given $w\in \M$, we can write for small $v$ that
\begin{align}
H(w+v)=H(w)+\frac{1}{2}\jb{\nb^{(2)}H(w)v,v}+\text{error terms}
\label{expa001}
\end{align}

\noi
where $H(\phi)$ is as in \eqref{Ham0}.
Thus, the essential term in \eqref{expa001} is $\jb{\nb^{(2)}H(w)v,v}$. In particular, if $\nb^{(2)}H(w)>0$, then inserting \eqref{expa001} into 
\begin{align*}
\int_{ \{ \| v\|\ll 1 \} } F(w+v) \exp\Big\{ -\frac{1}{\eps} H(w+v) \Big\} dv
\end{align*}

\noi
shows that we can expand around a new Gaussian measure, with covariance operator $\big(\nb^{(2)}H(w)\big)^{-1}=(-\Dl+\nb^{(2)}V(w))^{-1}$, where $V(\phi)=\frac 14(|\phi|^2-1)^2$, which was conducted in Ellis and Rosen \cite{ER1} in the non-singular case. We, however, point out that because of the counter terms (i.e.~renormalizations) in the singular case, we cannot take the expansion directly as in Ellis and Rosen \cite{ER1}. Moreover, compared to the non-singular case, the Fredholm determinant associated with the partition function of the new Gaussian measure exhibits divergence. Consequently, we substitute it with the Carleman–Fredholm (renormalized) determinant, precisely compensating  the divergence of the original Fredholm determinant.

Before we move to the proof of the expansion, we first introduce notations which will be used later. For $w \in \M$, we define $\mu_w(dv)$ to be the Gaussian measure with the covariance operator
$(-\Dl+\nb^{(2)}V(w))^{-1}$. Then, we have
\begin{align}
c_{w,N}=\E_{\mu_w}\Big[|v_N(x)|^2\Big]=\sum_{|m|\le N}\frac{1}{|2\pi m|^2+\nb^{(2)}V(w)}=\sum_{|m|\le N}\frac{1}{|2\pi m|^2+2}\sim \log N
\label{cwN01}
\end{align}

\noi
as $N \to \infty$. We note that as $N$ approaches infinity, both $c_{N}$ in \eqref{tadpole1} and $c_{w,N}$ show a logarithmic divergence. Since the Gaussian reference measure is being changed, we define the Wick powers taken with respect to the new reference measure $\mu_w(dv)$ as follows: 
\begin{align*}
:\! v_N^2 \!:_{w}&=v_N^2-c_{w,N}\\
: \! v_N^3 \! :_{w}&=v_N^3-3c_{w,N}v_N\\
: \! v_N^4 \!:_{w}&=v_N^4-6c_{w,N}v_N^2+3c_{w,N}^2.
\end{align*}

\noi
We first consider \eqref{ins0} with the ultraviolet cutoff $\phi_N=\textbf{P}_N\phi$ and then take the limit $N \to \infty$ later in Lemma \ref{LEM:LNT} below.

\begin{lemma}\label{LEM:expN}
There exists $\dl_0>0$ such that for any  $F: \mathcal{C}^{-\eta}(\T^2;\R)\to \R$ with at most polynomial growth rate, any $\eps>0$, any $N \ge 1$, and every $0<\dl\le \dl_0$, we have
\begin{align}
&\int_{\{ \textup{dist}_{\mathcal{C}^{-\eta} }(\phi_N,\mathcal{M}) <\dl  \}   } F(\phi_N) \exp\bigg\{ -\frac 1\eps \mathcal{V}(\phi_N)\bigg\} \mu_{\eps}(d\phi) \notag \\
&=\sum_{w \in \M}\Bigg[\dr_{\textup{re},N}(w)  \int_{ \{ \| \sqrt{\eps}v_N \|_{C^{-\eta}} < \dl   \}  } F(w+\sqrt{\eps}v_N ) \notag \\
&\hphantom{XXXXXXXXX} \times \exp\Big\{-\frac \eps4 \H_4(v,c_{w,N},c_N) -\sqrt{\eps} \H_3(v,c_{w,N},c_N)\cdot w  \Big\}\mu_{w,N}(dv)  \Bigg],
\label{CHA01}
\end{align}

\noi
where $\mu_{w,N}:=(\textbf{P}_N)_\#\mu_w$ is the Gaussian measure with the covariance operator $\big(\P_N(-\Dl+\nb^{(2)}V(w) )\P_N\big)^{-1}$ for each $w\in \M$,
\begin{align}
\H_4(v_N,c_{w,N},c_N):&=\int_{\T^2}  :\! v_N^4 \!:_w  dx-6(c_N-c_{w,N}) \int_{\T^2} :\! v_N^2 \!:_w dx  \\
\H_3(v_N,c_{w,N},c_N):&=\int_{\T^2}  :\! v_N^3 \!:_w  dx- 3(c_N-c_{w,N}) \int_{\T^2} v_N dx 
\end{align}

\noi
and $\dr_{\textup{re},N }(w)$ is the renormalized Fredholm determinant 
\begin{align}
\exp\Big\{\frac{c_N}{2}\Big\}\bigg(\det\big(\Id_N+\P_N(1-\Dl)^{-1}(\nb^{(2)}V(w)-1)\P_N\big) \bigg)^{-\frac 12}\exp\Big\{-\frac{3\eps}{4}|c_{w,N}-c_N|^2 \Big\}.
\label{CF01}
\end{align}

\end{lemma}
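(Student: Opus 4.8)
The goal is to rewrite the integral over a neighborhood of $\M$ by performing, for each minimizer $w \in \M$, the change of variables $\phi_N = w + v_N$ (so $v_N = \phi_N - w$) in the Gaussian integral against $\mu_\eps$, and then rescaling $v_N \mapsto \sqrt{\eps}\, v_N$. Since $\M = \{-1, +1\}$ is a finite set of isolated points, for $\dl$ sufficiently small the sets $\{\mathrm{dist}_{\mathcal{C}^{-\eta}}(\phi_N, \M) < \dl\}$ decompose into a disjoint union of neighborhoods of each $w$, so it suffices to treat a single $w$ and sum. The first step is to record the Cameron–Martin shift: under $\mu_\eps$ (covariance $\eps(1-\Dl)^{-1}$), the translated measure $\mu_\eps(\cdot - w)$ — note $w$ is a constant function, which lies in the Cameron–Martin space $H^1(\T^2)$ — has Radon–Nikodym density $\exp\{-\frac{1}{2\eps}\jb{(1-\Dl)w, w} + \frac{1}{\eps}\jb{(1-\Dl)w, v_N}\}$. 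Since $w = \pm 1$ is constant, $\Dl w = 0$, so this simplifies; one must track the $N$-dependence carefully since all objects are projected by $\P_N$ and $\P_N w = w$ for $N \ge 1$.

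**Key algebraic step.** The heart of the computation is to expand the renormalized potential $\mathcal{V}(\phi_N) = \mathcal{V}(w + \sqrt{\eps}\, v_N)$ in powers of $\sqrt{\eps}\,v_N$, being scrupulous about which Wick ordering is used. The Wick powers in $\mathcal{V}$ are taken with respect to $\mu_\eps$ (equivalently, involve the constant $c_N$ from \eqref{tadpole1}), whereas the target expression uses Wick powers $:\!v_N^j\!:_w$ relative to the new Gaussian measure $\mu_w$ (involving $c_{w,N}$ from \eqref{cwN01}). The plan is to first expand $(w + \sqrt\eps v_N)$ naively: writing $\phi_N^4, \phi_N^2$ in terms of $w$ and $v_N$ via the binomial theorem, then re-express monomials $v_N^j$ in terms of $:\!v_N^j\!:_w$ using the inversion formulas for Hermite polynomials, and finally collect: the quadratic-in-$v_N$ terms combine with the Gaussian weight $\exp\{-\frac{1}{2\eps}\jb{(1-\Dl)v_N, v_N}\}$ coming from $\mu_\eps$ to produce, after rescaling, the Gaussian measure $\mu_{w,N}$ with covariance $(\P_N(-\Dl + \nb^{(2)}V(w))\P_N)^{-1}$, because $\nb^{(2)}V(w) = 3w^2 - 1 = 2$ at $w = \pm 1$, matching the coefficient $3w^2$ arising from differentiating $\frac14(\phi^2-1)^2$ twice and the $-\frac12 \phi^2$ term. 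Everything that does not land in the Gaussian exponent or in $F$ must be collected into (i) the deterministic prefactor $\dr_{\mathrm{re},N}(w)$ — which is where the ratio of Gaussian normalizations produces the Fredholm determinant $\det(\Id_N + \P_N(1-\Dl)^{-1}(\nb^{(2)}V(w)-1)\P_N)^{-1/2}$ together with the exponential correction $\exp\{\frac{c_N}{2}\}\exp\{-\frac{3\eps}{4}|c_{w,N}-c_N|^2\}$ coming from the mismatch between the two Wick constants in the quartic and quadratic terms — and (ii) the $v_N$-dependent remainder $\exp\{-\frac\eps4 \H_4 - \sqrt\eps \H_3 \cdot w\}$, where $\H_4, \H_3$ are exactly the cubic and quartic contributions with the Wick-constant discrepancy $(c_N - c_{w,N})$ built in.

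**Where the normalization constant comes from.** To produce $\dr_{\mathrm{re},N}(w)$ precisely, the plan is to write the new Gaussian weight relative to the old one: $\exp\{-\frac{1}{2\eps}\jb{(-\Dl + \nb^{(2)}V(w))v_N, v_N}\} = \exp\{-\frac{1}{2\eps}\jb{(1-\Dl)v_N, v_N}\} \cdot \exp\{-\frac{1}{2\eps}\jb{(\nb^{(2)}V(w)-1)v_N, v_N}\}$, and then recognize $\mu_{\eps}$-integration against the second factor as a finite-dimensional Gaussian computation on the range of $\P_N$, yielding $\det(\Id_N + \P_N(1-\Dl)^{-1}(\nb^{(2)}V(w)-1)\P_N)^{-1/2}$ times the change in normalization. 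The $\eps$-independence of this determinant is because the $\eps$ scales out of both the covariance and the quadratic form. The remaining exponential factors $\exp\{c_N/2\}$ and $\exp\{-\frac{3\eps}{4}|c_{w,N}-c_N|^2\}$ are bookkeeping from the constant terms $+\frac14$ in $\mathcal V_1$, the $3c_N^2$ inside $:\!\phi_N^4\!:$, and the recentering of Wick powers; these I would extract by carefully matching \eqref{Wickquar}, \eqref{Wickqu} against the $:\!\cdot\!:_w$ definitions. Finally, the domain of integration $\{\mathrm{dist}_{\mathcal{C}^{-\eta}}(\phi_N, \M) < \dl\}$ becomes $\{\|\sqrt\eps v_N\|_{\mathcal C^{-\eta}} < \dl\}$ for $\dl \le \dl_0$ small enough that the neighborhoods of distinct minimizers are disjoint and each is exactly a ball around the corresponding $w$.

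**Main obstacle.** The principal difficulty is purely the bookkeeping of the two competing renormalizations: the Wick constants $c_N$ (from the reference measure $\mu_\eps$) versus $c_{w,N}$ (from $\mu_w$), which differ by an $O(1)$ quantity $c_N - c_{w,N}$ that does \emph{not} vanish as $N \to \infty$ (both diverge like $\log N$, but with the same leading behavior, so their difference converges — here to a computable constant since $\nb^{(2)}V(w) = 2$). Getting every term to land in exactly the right place — the determinant, the two scalar exponential corrections, or $\H_3, \H_4$ — requires disciplined Hermite-polynomial algebra, but involves no analytic subtlety at fixed $N$; the whole identity \eqref{CHA01} is a finite-dimensional change of variables once $N$ is fixed. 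The convergence as $N \to \infty$ (hence the renormalization of the Fredholm determinant into the Carleman–Fredholm determinant) is deferred to Lemma \ref{LEM:redet} and the subsequent limiting lemma, so it does not enter the proof of this lemma.
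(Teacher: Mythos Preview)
Your proposal is correct and follows essentially the same approach as the paper: decompose into disjoint balls around each $w\in\M$, apply the Cameron--Martin shift by $w$ together with the $\sqrt{\eps}$-rescaling, expand $\mathcal{V}(w+\sqrt{\eps}v_N)$ via the Hermite addition formula, absorb the quadratic-in-$v_N$ term into the new Gaussian $\mu_{w,N}$ (producing the finite-dimensional determinant), and convert the residual Wick powers from the $c_N$ convention to the $c_{w,N}$ convention to obtain $\H_3,\H_4$ and the scalar corrections $\exp\{c_N/2\}\exp\{-\tfrac{3\eps}{4}|c_N-c_{w,N}|^2\}$. The only cosmetic difference is the order of operations: the paper first rescales $\mu_\eps$ to $\mu$ and then translates by $\sqrt{\eps}^{-1}w$, whereas you translate under $\mu_\eps$ and then rescale, but the composite change of variables is identical.
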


\begin{proof}
By taking $\dl>0$ sufficiently small such that $B(1,\dl)\cap B(-1,\dl)=\emptyset $, where $B(w,R)$ denotes the ball of radius $R$ in $\mathcal{C}^{-\eta}(\T^2)$ centered at $w$, we have
\begin{align}
&\int_{\{ \textup{dist}_{\mathcal{C}^{-\eta} }(\phi_N,\mathcal{M}) <\dl  \}   } F(\phi_N) \exp\bigg\{ -\frac 1\eps \mathcal{V}(\phi_N)\bigg\} d\mu_{\eps}(\phi) \notag \\
&=\sum_{w \in \M}\int_{ \{ \| \phi_N -w \|_{C^{-\eta}} < \dl   \}  } F(\phi_N )\exp\Big\{-\frac 1\eps \mathcal{V}(\phi_N)     \Big\} d\mu_\eps(\phi) \notag \\
&= \sum_{w \in \M}\int_{ \{ \|  \sqrt{\eps}\phi_N -w \|_{C^{-\eta}} < \dl   \}  } F(\sqrt{\eps}\phi_N )\exp\Big\{-\frac 1\eps \mathcal{V}(\sqrt{\eps}\phi_N)     \Big\} d\mu(\phi).
\label{a0}
\end{align}

\noi
By using the translation $v\mapsto v+\sqrt{\eps}^{-1}w$  and denoting the translation map $T_{(\sqrt{\eps}^{-1}w)}(v)=v-\sqrt{\eps}^{-1}w$, we have
\begin{align}
&\sum_{w \in \M}\int_{ \{ \| \sqrt{\eps}\phi_N -w \|_{C^{-\eta}} < \dl   \}  } F(\sqrt{\eps} \phi_N )\exp\Big\{-\frac 1\eps \mathcal{V}(\sqrt{\eps} \phi_N)     \Big\} d\mu(\phi) \notag \\
&=\sum_{w \in \M}\int_{ \{ \| \sqrt{\eps}v_N \|_{C^{-\eta}} < \dl   \}  } F(w+\sqrt{\eps}v_N )\exp\Big\{-\frac 1\eps \mathcal{V}(w+\sqrt{\eps}v_N )     \Big\} d(T_{(\sqrt{\eps}^{-1}w)})_{\#} \mu(v)
\label{a1}
\end{align}

\noi
By recalling the Cameron–Martin formula for the free field $\mu$ with covariance operator $(1-\Dl)^{-1}$, we have
\begin{align}
\frac{d(T_{h})_{\#} \mu}{d\mu}(v)=\exp\Big\{-\frac 12 \big\langle(1-\Dl)h,h \big \rangle_{L^2}  -\big\langle(1-\Dl)h, v\big\rangle_{L^2} \Big\}
\label{a2}
\end{align}

\noi
where $h$ is an element of the Cameron–Martin space $H^1(\T^2)$. By combining \eqref{a0}, \eqref{a1}, and \eqref{a2}, we have
\begin{align}
&\int_{\{ \textup{dist}_{\mathcal{C}^{-\eta} }(\phi_N,\mathcal{M}) <\dl  \}   } F(\phi_N) \exp\bigg\{ -\frac 1\eps \mathcal{V}(\phi_N)\bigg\} d\mu_{\eps}(\phi) \notag \\
&=\sum_{w \in \M}\int_{ \{ \| \sqrt{\eps}v_N \|_{C^{-\eta}} < \dl   \}  } F(w+\sqrt{\eps}v_N )\exp\Big\{-\frac 1\eps \mathcal{V}(w+\sqrt{\eps}v_N ) -\frac{1}{2\eps}-\frac{w}{\sqrt{\eps}}\int_{\T^2} v_N  dx    \Big\}   d\mu(v)
\label{CM01}
\end{align}

\noi
where we used the fact that $w \in \M=\{-1,1\}$ is a constant. We recall the identity
\begin{align}
H_k(x+y; \s )&  = 
\sum_{\l = 0}^k
\begin{pmatrix}
k \\ \l
\end{pmatrix}
 x^{k - \l} H_\l(y; \s),
\label{Herm}
\end{align}

\noi
where $H_k(x;\s)$ is the Hermite polynomial of degree $k$. Therefore, from \eqref{V12} and \eqref{Herm}, we have 
\begin{align}
\mathcal{V}(w + \sqrt{\eps} v_N ) & =\mathcal{V}_{1}(w + \sqrt{\eps} v_N)+\mathcal{V}_{2}(w + \sqrt{\eps} v_N) \notag \\
&=\mathcal{V}_{1,1}(w + \sqrt{\eps} v_N)-\mathcal{V}_{1,2}(w + \sqrt{\eps} v_N)+\frac 14+\mathcal{V}_2(w + \sqrt{\eps} v_N )
\label{a20}
\end{align} 

\noi
where
\begin{align}
\mathcal{V}_{1,1}(w + \sqrt{\eps} v_N)&=\frac 14\int_{\T^2}  :\! (\sqrt{\eps}v_N)^4 \!:  dx
+ \int_{\T^2}  :\! ((\sqrt{\eps}v_N))^3 \!: w dx+\frac {3}2\int_{\T^2}  :\! (\sqrt{\eps}v_N)^2 \!:  w^2 dx \notag \\
&\hphantom{X}
+ \int_{\T^2} (\sqrt{\eps}v_N)  w^3 dx
+ \frac 14\int_{\T^2} w^4 dx,   \label{a21}\\
\mathcal{V}_{1,2}(w + \sqrt{\eps} v_N)&=\frac 12 \int_{\T^2}  :\! (\sqrt{\eps}v_N)^2 \!:  dx+\int_{\T^2}   (\sqrt{\eps}v_N) w  dx+ \frac 12 \int_{\T^2}  w^2 dx   \label{a3}\\
\mathcal{V}_{2}(w + \sqrt{\eps} v_N)&=-\frac 12 \int_{\T^2}  :\! (\sqrt{\eps}v_N)^2 \!:  dx-\int_{\T^2}   (\sqrt{\eps}v_N) w  dx- \frac 12 \int_{\T^2}  w^2 dx.
\label{a4}
\end{align}

\noi
It follows from \eqref{CM01}, \eqref{a20}, \eqref{a21}, \eqref{a3} and \eqref{a4} that\footnote{Recall that we endow $\T^2$ with the normalized Lebesgue measure $dx_{\T^2}= (2\pi)^{-2} dx$.} 
\begin{align}
&\frac 1\eps\mathcal{V}(w + \sqrt{\eps} v_N)+\frac{1}{2\eps}+\frac{w}{\sqrt{\eps}} \int_{\T^2} v_N dx \notag \\
&= \frac \eps4\int_{\T^2}  :\! v_N^4 \!:  dx +\sqrt{\eps} \int_{\T^2}  :\! v_N^3 \!: w dx+\frac{1}{2}\int_{\T^2}  v_N^2 dx-\frac{c_N}{2}
\label{WICKO}
\end{align}

\noi
where $c_N$ is as in \eqref{tadpole1}, which is a divergent constant as $N\to\infty$.
By combining \eqref{WICKO} with \eqref{CM01}, we have
\begin{align}
&\int_{\{ \textup{dist}_{\mathcal{C}^{-\eta} }(\phi_N,\mathcal{M}) <\dl  \}   } F(\phi_N) \exp\bigg\{ -\frac 1\eps \mathcal{V}(\phi_N)\bigg\} d\mu_{\eps}(\phi) \notag \\
&=\sum_{w \in \M}\int_{ \{ \| \sqrt{\eps}v_N \|_{C^{-\eta}} < \dl   \}  } F(w+\sqrt{\eps}v_N ) \notag \\
&\hphantom{XXXXXXXXXX}\times \exp\Big\{-\frac \eps4\int_{\T^2}  :\! v_N^4 \!:  dx -\sqrt{\eps} \int_{\T^2}  :\! v_N^3 \!: w dx-\frac{1}{2}\int_{\T^2}  v_N^2 dx+\frac{c_N}{2}\Big\}   d\mu(v).
\label{CCM01}
\end{align}

\noi
Regarding the term $-\frac{1}{2}\int_{\T^2} v_N^2 dx$ on the right hand side of \eqref{CCM01}, we notice that\footnote{Recall that $\nb^{(2)}V(u)=3|u|^2-1$ and so $\nb^{(2)}V(w)=2$ for any $w \in \M$.}  
\begin{align}
\exp\Big\{- \frac{1}{2} \int_{\T^2} |v_N|^2 dx \Big\}=\exp\Big\{-\frac 12 \big\langle (\nb^{(2)} V(w)-1)v_N, v_N \big\rangle_{L^2} \Big\} 
\label{S1}
\end{align}

\noi
for any $w\in \M=\{-1,1\}$, where $V(u)=\frac{1}{4}\big(|u|^2-1\big)^2$. Hence, by combining \eqref{S1} with the Gaussian free field $\mu$, we introduce a new Gaussian measure with the covariance operator 
\begin{align*}
B_{w,N}:=(\P_N(1-\Dl)\P_N+\P_N(\nb^{(2)}V(w)-1) \P_N)^{-1}=(\P_N(-\Dl+\nb^{(2)}V(w)) \P_N)^{-1}
\end{align*} 

\noi
as follows
\begin{align}
\mu_{w,N}(dv)&=\sqrt{\det \big(\Ld_N\big)}\exp\Big\{-\frac 12 \big\langle (\nb^{(2)}V(w)-1)v_N,v_N\big\rangle_{L^2} \Big\}d\mu_N(v) \notag \\
&=\frac{1}{\sqrt{\det\big(B_{w,N}\big)}}\exp\Big\{-\frac 12 \big\langle (-\Dl+\nb^{(2)}V(w))v_N,v_N\big\rangle_{L^2} \Big\} dv_N
\label{S2}
\end{align}

\noi
where $\mu_N$ is the Gaussian measure with covariance $A_N=\P_N(1-\Dl)^{-1}\P_N$ and
\begin{align*}
B_{w,N}&=\sqrt{A_{N}} \big(\Id_N+\sqrt{A_{N}}(\nb^{(2)}V(w)-1)\sqrt{A_N} \big)^{-1}     \sqrt{A_{N}}\\
&= \sqrt{A_{N}}  \Ld_N^{-1} \sqrt{A_{N}}
\end{align*}


\noi
where $\Ld_N:=\Id_N+\sqrt{A_{N}}(\nb^{(2)}V(w)-1)\sqrt{A_N}$.
We also note that $\det\big(\Ld_N\big)$ can be written as
\begin{align*}
\det\big(\Ld_N\big)&=\det\big(\Id_N+\P_N(1-\Dl)^{-\frac 12}(\nb^{(2)}V(w)-1)(1-\Dl)^{-\frac 12}\P_N\big)\\
&=\det\big(\Id_N+\P_N(1-\Dl)^{-1}(\nb^{(2)}V(w)-1)\P_N\big).
\end{align*}


\noi
Notice that the new Gaussian measure $\mu_{w,N}$ converges weakly to the limiting Gaussian measure 
\begin{align}
\mu_{w}(dv)=\frac{1}{\sqrt{\det\big(B_{w}\big)}}\exp\Big\{-\frac 12 \big\langle (-\Dl+\nb^{(2)}V(w))v,v\big\rangle_{L^2} \Big\}dv
\label{new}
\end{align} 

\noi
with the covariance operator $C_w=(-\Dl+\nb^{(2)}V(w) )^{-1}$ for any $w\in \M$. Hence, we need to rewrite the Wick powers $:v_N^4:$ and $:v_N^3:$ in \eqref{WICKO} in terms of the new Gaussian measure $\mu_w$ in \eqref{new}. We note that
for any Gaussian $X$ and $\s_1, \s_2$ in $\R$, we have 
\begin{align}
H_1(X;\s_1)&=H_1(X;\s_2) \label{Her1} \\
H_2(X;\s_1)&=H_2(X;\s_2)-(\s_1-\s_2) \label{Her2} \\
H_3(X;\s_1)&=H_3(X;\s_2)-3(\s_1-\s_2)H_1(X,\s_2) \label{Her3} \\
H_4(X;\s_1)&=H_3(X;\s_2)-6(\s_1-\s_2)H_2(X,\s_2)+3(\s_1-\s_2)^2 \label{Her4}
\end{align}

\noi
where $H_k(x;\s)$ is the Hermite polynomial of degree $k$. Hence, it follows from \eqref{Her1}, \eqref{Her2}, \eqref{Her3}, and \eqref{Her4} that
\begin{align}
:\!v_N^2\!:&=:\!v_N^2\!:_{w}-(c_N-c_{w,N}), \label{S20}\\
:\!v_N^3\!:&=:\!v_N^3\!:_{w}-3(c_N-c_{w,N})v_N, \label{S3}\\
:\!v_N^4\!:&=:\!v_N^4\!:_{w}-6(c_N-c_{w,N}):\!v_N^2\!:_w+3|c_{N}-c_{w,N}|^2
\label{S4}
\end{align}

\noi
where 
\begin{align*}
:\! v_N^2 \!:_{w}&=v_N^2-c_{w,N}\\
: \! v_N^3 \! :_{w}&=v_N^3-3c_{w,N}v_N\\
: \! v_N^4 \!:_{w}&=v_N^4-6c_{w,N}v_N^2+3c_{w,N}^2
\end{align*}

\noi
and
\begin{align}
c_{w,N}=\E_{\mu_w}\Big[|v_N(x)|^2\Big]=\sum_{|m|\le N}\frac{1}{|2\pi m|^2+\nb^{(2)}V(w)}=\sum_{|m|\le N}\frac{1}{|2\pi m|^2+2}\sim \log N
\label{cwNdiv}
\end{align}

\noi
as $N \to \infty$. We note that as $N$ approaches infinity, both $c_{N}$ in \eqref{tadpole1} and $c_{w,N}$ in \eqref{cwNdiv} exhibit logarithmic divergence. However, the difference between them, $c_{N} - c_{w,N}$, converges as $N$ tends to infinity since
\begin{align}
c_{N}-c_{w,N}&=\sum_{|m|\le N}\bigg(\frac{1}{|2\pi m|^2+1}-\frac{1}{|2\pi m|^2+2}\bigg) \notag \\
&=\sum_{|m|\le N} \frac{1}{(|2\pi m|^2+2)(|2\pi m|^2+1)}<\infty,
\label{diffcov}
\end{align}

\noi
uniformly in $N \ge 1$. Hence, by using  \eqref{S20}, \eqref{S3}, and \eqref{S4}, we rewrite the terms in \eqref{CCM01} as follows
\begin{align}
\int_{\T^2}  :\! v_N^4 \!:  dx&=\int_{\T^2}  :\! v_N^4 \!:_w  dx-6(c_N-c_{w,N}) \int_{\T^2} :\! v_N^2 \!:_w dx++3|c_{N}-c_{w,N}|^2 \label{JJJM1}\\
\int_{\T^2}  :\! v_N^3 \!:  dx&=\int_{\T^2}  :\! v_N^3 \!:_w  dx- 3(c_N-c_{w,N}) \int_{\T^2} v_N dx \label{JJJM2}. 
\end{align}

\noi
It follows from \eqref{CCM01}, \eqref{S1}, \eqref{S2}, \eqref{JJJM1}, and \eqref{JJJM2} that 
\begin{align}
&\int_{\{ \textup{dist}_{\mathcal{C}^{-\eta} }(\phi_N,\mathcal{M}) <\dl  \}   } F(\phi_N) \exp\bigg\{ -\frac 1\eps \mathcal{V}(\phi_N)\bigg\} d\mu_{\eps}(\phi) \notag \\
&=\sum_{w \in \M}\Bigg[\dr_{\textup{re},N}(w)  \int_{ \{ \| \sqrt{\eps}v_N \|_{C^{-\eta}} < \dl   \}  } F(w+\sqrt{\eps}v ) \notag \\
&\hphantom{XXXXXXXXX} \times \exp\Big\{-\frac \eps4 \H_4(v,c_{w,N},c_N) -\sqrt{\eps} \H_3(v,c_{w,N},c_N)\cdot w  \Big\}d\mu_{w,N}(v)  \Bigg]
\label{IIN1}
\end{align}

\noi
where 
\begin{align}
\H_4(v_N,c_{w,N},c_N):&=\int_{\T^2}  :\! v_N^4 \!:_w  dx-6(c_N-c_{w,N}) \int_{\T^2} :\! v_N^2 \!:_w dx \label{H4}\\
\H_3(v_N,c_{w,N},c_N):&=\int_{\T^2}  :\! v_N^3 \!:_w  dx- 3(c_N-c_{w,N}) \int_{\T^2} v_N dx \label{H3}
\end{align}

\noi
and $\dr_{\textup{re},N }(w)$ is the renormalized Fredholm determinant
\begin{align}
&\exp\Big\{\frac{c_N}{2}\Big\} \Big(\det\big(\Ld_N\big)\Big)^{-\frac{1}{2}}\exp\Big\{-\frac{3\eps}{4}|c_N-c_{w,N}|^2 \Big\}.
\label{Redet}
\end{align}

\noi
This completes the proof of Lemma \ref{LEM:expN}.
\end{proof}

First recall that for a linear operator $L$ on a Hilbert space $H$ with eigenvalues $\{\ld_k\}_{k\in J}$, where $J$ is a
countable set, the Fredholm determinant $\det(\Id+L)$ is defined by
\begin{align*}
\det(\Id+L)=\prod_{k \in J}(1+\ld_k).
\end{align*}

\noi
Notice that the Fredholm determinant $\det\big(\Id_N+\P_N(1-\Dl)^{-1}(\nb^{(2)}V(w)-1)\P_N\big)$ in \eqref{Redet} appears as a consequence of transforming the Gaussian reference measure. We, however, note that compared to previous cases \cite{ER1, GT}, the Fredholm determinant is not well-defined, since $(1-\Dl)^{-1}$ is not of trace class in $L^2(\T^2)$, where $\text{Tr}\big[(-\Delta+1)^{-1} \big]$ exhibits a logarithmic divergence. It is important to observe that  the additional term $\exp\big\{\frac{c_N}{2}\big\}$ in \eqref{CF01} precisely compensates the divergence of the Fredholm determinant, replacing it by a so-called Carleman–Fredholm renormalized determinant. 
More precisely, we introduce the modified determinant 
\begin{align*}
\det(\Id + L )e^{- \text{Tr} L}
\end{align*}

\noi
of a linear operator $L$, as appearing in \eqref{Redet}, commonly known as the Carleman–Fredholm determinant, which is defined for every Hilbert–Schmidt perturbation $L$ of the identity, without the requirement of $L$ to be of trace-class, see \cite[Chapter 5]{BSim}. The same phenomenon was observed in \cite[Section 2]{BDW} in the context of obtaining the Eyring--Kramers formula.

In the following lemma, we show that thanks to the renormalization factor effect $\exp\big\{\frac{c_N}{2}\big\}$, the renormalized determinant $\dr_{\textup{re},N }$ in \eqref{Redet} converges as $N \to \infty$.

\begin{lemma}\label{LEM:redet}
For every $w\in \M$, the  renormalized determinant $\dr_{\textup{re},N }(w)$ defined in \eqref{CF01} converges as $N\to \infty$ where $c_N$ and $c_{w,N}$ are given by \eqref{tadpole1} and \eqref{cwN01}, respectively.
\end{lemma}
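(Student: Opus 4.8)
The plan is to show that the renormalized determinant
$$\dr_{\textup{re},N}(w)=\exp\Big\{\tfrac{c_N}{2}\Big\}\Big(\det\big(\Id_N+\P_N(1-\Dl)^{-1}(\nb^{(2)}V(w)-1)\P_N\big)\Big)^{-\frac 12}\exp\Big\{-\tfrac{3\eps}{4}|c_N-c_{w,N}|^2\Big\}$$
converges by isolating the divergent part of the Fredholm determinant and checking it is exactly cancelled by $\exp\{c_N/2\}$. Since $\nb^{(2)}V(w)-1=2-1=1$ for every $w\in\M=\{\pm1\}$, the perturbing operator is simply $\P_N(1-\Dl)^{-1}\P_N=A_N$, which is diagonalized by the Fourier basis with eigenvalues $\jb{n}^{-2}$ for $|n|\le N$. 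Hence
$$\det\big(\Id_N+A_N\big)=\prod_{|n|\le N}\Big(1+\tfrac{1}{\jb{n}^2}\Big),$$
and the third factor $\exp\{-\frac{3\eps}{4}|c_N-c_{w,N}|^2\}$ already converges by \eqref{diffcov}, so it may be set aside. The task thus reduces to showing
$$\exp\Big\{\tfrac{c_N}{2}\Big\}\prod_{|n|\le N}\Big(1+\tfrac{1}{\jb{n}^2}\Big)^{-\frac 12}\longrightarrow\text{finite nonzero limit}.$$

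The key step is the elementary expansion $\log(1+x)=x-r(x)$ with $0\le r(x)\le x^2/2$ for $x\ge0$. Applying this to $x=\jb{n}^{-2}$ gives
$$-\tfrac12\log\det\big(\Id_N+A_N\big)=-\tfrac12\sum_{|n|\le N}\log\Big(1+\tfrac1{\jb{n}^2}\Big)=-\tfrac12\sum_{|n|\le N}\tfrac{1}{\jb{n}^2}+\tfrac12\sum_{|n|\le N}r\big(\jb{n}^{-2}\big).$$
By \eqref{tadpole1}, $\sum_{|n|\le N}\jb{n}^{-2}=c_N$, so the first sum exactly cancels the $\exp\{c_N/2\}$ prefactor, leaving
$$\log\Big(\exp\{\tfrac{c_N}{2}\}\det(\Id_N+A_N)^{-1/2}\Big)=\tfrac12\sum_{|n|\le N}r\big(\jb{n}^{-2}\big).$$
Since $0\le r(\jb{n}^{-2})\le \tfrac12\jb{n}^{-4}$ and $\sum_{n\in\Z^2}\jb{n}^{-4}<\infty$, the remaining series converges absolutely as $N\to\infty$; moreover it is nonnegative, so the limit of $\dr_{\textup{re},N}(w)$ is strictly positive and finite. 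Combining with the convergence of $\exp\{-\frac{3\eps}{4}|c_N-c_{w,N}|^2\}$ from \eqref{diffcov} completes the argument. One may also phrase this more invariantly: $A_N\to(1-\Dl)^{-1}$ in Hilbert–Schmidt norm (since $(1-\Dl)^{-1}$ is Hilbert–Schmidt on $L^2(\T^2)$), and the Carleman–Fredholm determinant $\det(\Id+L)e^{-\mathrm{Tr}\,L}$ is continuous in $L$ with respect to the Hilbert–Schmidt topology \cite[Chapter 5]{BSim}; the factor $\exp\{c_N/2\}=\exp\{\tfrac12\mathrm{Tr}\,A_N\}$ precisely converts the (ill-defined in the limit) Fredholm determinant into this well-defined Carleman–Fredholm determinant of $(1-\Dl)^{-1}(\nb^{(2)}V(w)-1)$.

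I do not expect any serious obstacle here: the only thing to be careful about is that $\nb^{(2)}V(w)-1=1$ identically on $\M$, so the perturbation is a fixed multiplication operator and no $w$-dependence survives in the determinant factor; the bookkeeping between $c_N$, the trace of $A_N$, and the logarithm of the Fredholm product is the entire content. The mild point worth spelling out is the uniform summability $\sum_{n\in\Z^2}\jb{n}^{-4}<\infty$ in $d=2$ (which is exactly the Hilbert–Schmidt property of $(1-\Dl)^{-1}$ alluded to in the discussion preceding the lemma) together with the nonnegativity of $r$, which guarantees the limit is not only finite but strictly positive — this positivity matters for $\dr_{\textup{re}}(w)$ to serve as an asymptotic mass in Theorems \ref{THM:1}--\ref{THM:3}.
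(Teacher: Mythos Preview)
Your proposal is correct and follows essentially the same approach as the paper: diagonalize the perturbation in the Fourier basis, write the Fredholm determinant as the product $\prod_{|n|\le N}(1+\jb{n}^{-2})$, and use the elementary inequality $x-\log(1+x)\le x^2/2$ to show that the factor $\exp\{c_N/2\}$ exactly cancels the divergent trace, leaving an absolutely convergent remainder controlled by $\sum_n\jb{n}^{-4}<\infty$. Your additional remarks on strict positivity of the limit and the abstract reformulation via continuity of the Carleman--Fredholm determinant in Hilbert--Schmidt norm are correct bonuses not made explicit in the paper's proof.
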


\begin{proof}
Both $c_{N}$ in \eqref{tadpole1} and $c_{w,N}$ in \eqref{cwNdiv} show logarithmic divergence. However, their difference $c_{N} - c_{w,N}$ converges as $N$ tends to infinity thanks to \eqref{diffcov}. Hence, it suffices to consider the first two terms in \eqref{CF01}.

Note that when $m \in \Z^2$, we have
\begin{align*}
\big(\P_N(-\Dl+1)^{-1}(\nb^{(2)}V(w)-1)\P_N\big)e^{2\pi i m\cdot x}=(1+|2\pi m|^2)^{-1} e^{2\pi i m\cdot x}.
\end{align*}


\noi
Hence, we obtain
\begin{align}
\det\big(\Id_N+\P_N(1-\Dl)^{-1}(\nb^{(2)}V(w)-1)\P_N\big)&=\prod_{|m|\le N} \Big(1+\frac{1}{1+|2\pi m|^2} \Big) \notag \\
&=\bigg(\prod_{ |m| \le N} \frac{2+|2\pi m|^2}{1+|2\pi m|^2} \bigg). 
\label{e3}
\end{align}

\noi
Recall that when $a_m>0$, $\prod\limits_{m}(1+a_m)$ converges if and only if $\sum\limits_{m}a_m <\infty$. Hence, \eqref{e3} diverges as $N\to \infty$ since $\sum\limits_{m\in \Z^2}(1+|2\pi m|^2)^{-1}=\infty$. We now exploit the effect $\exp\Big\{ \frac{c_N}{2} \Big\}$ to ensure the convergence of the determinant. It follows from \eqref{tadpole1} and \eqref{e3} that
\begin{align}
&\exp\Big\{\frac{c_N}{2}\Big\}\bigg(\det\big(\Id_N+\P_N(1-\Dl)^{-1}(\nb^{(2)}V(w)-1)\P_N\big) \bigg)^{-\frac 12} \notag \\
&=\Bigg(\prod_{|m|\le N} \frac{ (1+|2\pi m|^2) \exp\big\{\frac{1}{1+|2\pi m|^2} \big\}   }{2+|2\pi m|^2 } \Bigg)^{\frac 12}=\Bigg(\prod_{|m|\le N} \frac{e^{1/\ld_m }}{1+1/\ld_m} \Bigg)^{\frac 12},
\label{proexp}
\end{align}

\noi
where $\ld_m=1+|2\pi m|^2$. Thanks to 
\begin{align*}
\log\bigg(\frac{e^{x}}{1+x}\bigg)=x-\log(1+x) \le \frac{1}{2}x^2,
\end{align*}

\noi
for $x>0$, we have that
\begin{align}
\sum_{|m|\le N} \log\bigg(  \frac{e^{1/\ld_m }}{1+1/\ld_m} \bigg)\le \frac 12\sum_{|m|\le N} \ld_m^{-2} <\infty,
\label{sumlog}
\end{align}

\noi
uniformly in $N \ge 1$. We emphasize that, due to the renormalization effect $e^{1/\ld_m}$, it was possible to obtain the sum of $\ld_m^{-2}$ over $m\in \Z^2$ (instead of the sum of $\ld_m^{-1}$). Therefore, \eqref{sumlog} shows that \eqref{proexp} converges as $N\to \infty$, from which we obtain the result.

\end{proof}


Thanks to Lemma \ref{LEM:redet} and \eqref{diffcov}, we have that for any $w\in \M$
\begin{align}
\dr_{\text{Re,N}}(w) &\to \dr_{\text{Re}}(w) \label{REDET0}\\ 
\H_3(v_N,c_{w,N},c_N) & \to \H_3(v,c_{w},c) \label{H3c}\\
\H_4(v_N,c_{w,N},c_N) & \to \H_4(v,c_{w},c) \label{H4c}
\end{align}

\noi
in $L^p(d\mu_w)$ for any $1\le p<\infty$ as $N \to \infty$. For the convergence of \eqref{H3c} and \eqref{H4c}, see, for example, \cite[Proposition 1.1]{OTh}. In order to take the limit $N\to \infty$ in \eqref{CHA01}, we need the following uniform exponential integrability. 

\begin{lemma}\label{LEM:UNIF}
Let $\dl>0$ and $1\le p<\infty$. Then, there exists a constant $C_{\dl,p}>0$ and $\eps_0>0$ such that for any $0< \eps \le \eps_0$ and any $w\in \M$, we have
\begin{align}
\int_{ \{ \| \sqrt{\eps}v \|_{C^{-\eta}} < \dl   \}  }  \exp\Big\{-\frac {\eps p}{4} \H_4(v_N,c_{w,N},c_N) -\sqrt{\eps}p \H_3(v_N,c_{w,N},c_N)\cdot w  \Big\}\mu_{w}(dv)\le C_{\dl, p}<\infty,
\label{UNIFORM}
\end{align}

\noi
uniformly in $N\ge 1$, where $\H_4$ and $\H_3$ are as in \eqref{H4} and \eqref{H3} and
$\mu_w$ is the Gaussian measure with the covariance $(-\Dl+\nb^{(2)}V(w) )^{-1}$.

\end{lemma}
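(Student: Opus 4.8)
The plan is to bound $-\log$ of the left-hand side of \eqref{UNIFORM} from below, uniformly in $N$ and in $\eps\in(0,\eps_0]$, by means of the Bou\'e--Dupuis variational formula applied to the shifted reference measure $\mu_w$. Since $\nb^{(2)}V(w)=2$ for every $w\in\M=\{-1,+1\}$, both $\mu_w$ and the centered Gaussian process $\<1>_w(t):=(-\Dl+2)^{-1/2}W(t)$ (for which $\Law(\<1>_w(1))=\mu_w$) are independent of $w$, and the reflection $v\mapsto-v$ preserves $\mu_w$ while exchanging the two values of $w$; so it suffices to treat $w=+1$. As the integrand depends on $v$ only through $v_N=\textbf{P}_Nv$, I first enlarge the cutoff to the frequency-truncated one $\{\|\sqrt\eps v_N\|_{\mathcal C^{-\eta}}<C\dl\}$ (boundedness of $\textbf{P}_N$ on $\mathcal C^{-\eta}$), then use $\ind_{\{\cdot\}}e^{-\Phi_N}\le e^{-\Phi_N\ind_{\{\cdot\}}}$ with $\Phi_N(v_N):=\frac{\eps p}{4}\H_4(v_N,c_{w,N},c_N)+\sqrt\eps\, p\,\H_3(v_N,c_{w,N},c_N)\,w$, and apply the analogue of Lemma~\ref{LEM:var3} with $\jb\nb^{-1}$ replaced by $(-\Dl+2)^{-1/2}$; the integrability hypotheses there are checked for each fixed $N$ and $\eps$ by completing the square in $\int_{\T^2}:\!v_N^2\!:_w\,dx$. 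The task reduces to showing that, for every $\dr\in\Ha$,
\[
\E\Big[\Phi_N(\<1>_{w,N}+\Dr_N)\,\ind_{\{\|\sqrt\eps(\<1>_{w,N}+\Dr_N)\|_{\mathcal C^{-\eta}}<C\dl\}}+\tfrac12\int_0^1\|\dr(t)\|_{L^2_x}^2\,dt\Big]\ \ges\ -C_{\dl,p},
\]
where $\|\Dr_N\|_{H^1}^2\le C_0\int_0^1\|\dr(t)\|_{L^2_x}^2\,dt$ and $\|\Dr_N\|_{L^2}^2\le\tfrac12\int_0^1\|\dr(t)\|_{L^2_x}^2\,dt$.

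Expanding the Wick powers $:\!(\<1>_{w,N}+\Dr_N)^k\!:_w$ via the Hermite shift identity \eqref{Herm} isolates the nonnegative reservoir $\U(\Dr_N):=\frac{\eps p}{4}\int_{\T^2}\Dr_N^4\,dx+\frac12\int_0^1\|\dr(t)\|_{L^2_x}^2\,dt$, the pure cubic term $\sqrt\eps\, p\,w\int_{\T^2}\Dr_N^3\,dx$, and a finite family of cross terms. Every cross term carries a factor $\eps^{1/2}$ (those arising from $\H_3$) or $\eps$ (those arising from $\H_4$) and, by the estimates of Lemma~\ref{LEM:Dr1} together with the duality pairing \eqref{dual} for the $\int_{\T^2}(\text{Wick})\,dx$ contributions, is bounded below by $-\zeta\,\U(\Dr_N)-c(\zeta)\big(\VVert{\Xi_{w,N}^\eps}^{\mathfrak p}+1\big)$ for any $\zeta>0$, where $\Xi_{w,N}^\eps=\big(\eps^2\<4>_w,\ \eps^{3/2}\<3>_w,\ \eps\<2>_w,\ \sqrt\eps\,\<1>_w\big)$ with the Wick powers taken with respect to $\mu_w$; here one uses the uniform-in-$N$ moment bound $\E_{\mu_w}\big[\VVert{\Xi_{w,N}^\eps}^{\mathfrak p}\big]\le C(\mathfrak p)$, which holds by the same computation as Lemma~\ref{LEM:Cor00}(i) since $(-\Dl+2)^{-1}$ has the same short-distance behaviour as $(1-\Dl)^{-1}$. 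Fixing $\zeta$ and then $\eps_0$ small renders all these absorptions and the ensuing expectations harmless, so everything comes down to the pure cubic.

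The pure cubic is the crux, and it is exactly why $\dl$ must be taken small: minimized freely, $\frac{\eps p}{4}\int\Dr_N^4+\sqrt\eps\, p\,w\int\Dr_N^3$ attains $-\frac{27p}{4\eps}$ at the constant drift $\Dr_N\equiv-3w/\sqrt\eps$, whose drift cost is only $\frac{9}{\eps}$, so for $p>\tfrac43$ the variational functional would be unbounded below in the absence of the cutoff. On the support of the cutoff, however, the triangle inequality in $\mathcal C^{-\eta}$ gives $\sqrt\eps\,\|\Dr_N\|_{\mathcal C^{-\eta}}\le C\dl+\sqrt\eps\,\|\<1>_w\|_{\mathcal C^{-\eta}}$, hence, by \eqref{dual}, the fractional Leibniz rule \eqref{fprod} and interpolation,
\[
\sqrt\eps\, p\,\Big|\int_{\T^2}\Dr_N^3\,dx\Big|\ \les\ p\,\big(\sqrt\eps\,\|\Dr_N\|_{\mathcal C^{-\eta}}\big)\,\|\Dr_N\|_{H^1}^2\ \les\ p\,\big(C\dl+\sqrt\eps\,\|\<1>_w\|_{\mathcal C^{-\eta}}\big)\,C_0\,\U(\Dr_N),
\]
while off the cutoff support the cubic is multiplied by $0$. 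On the event $\big\{\sqrt\eps\,\|\<1>_w\|_{\mathcal C^{-\eta}}\le\dl\big\}$ the right-hand side is $\le 2pC_0C\dl\,\U(\Dr_N)\le\tfrac18\U(\Dr_N)$ provided $\dl\le\dl_0(p):=(16pC_0C)^{-1}$; on this event the bracket is therefore $\ge\tfrac14\U(\Dr_N)-c(\zeta)\big(\VVert{\Xi_{w,N}^\eps}^{\mathfrak p}+1\big)\ge-c(\zeta)\big(\VVert{\Xi_{w,N}^\eps}^{\mathfrak p}+1\big)$ pointwise, and its expectation is $\ges-C_{\dl,p}$ uniformly in $N$ and $\dr$ by the moment bound above.

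The remaining contribution comes from the event $\big\{\sqrt\eps\,\|\<1>_w\|_{\mathcal C^{-\eta}}>\dl\big\}$, which by Fernique's theorem (Lemma~\ref{LEM:Fern}) has $\PP$-probability $\les e^{-c\dl^2/\eps}$ uniformly in $N$. There the coefficient $\tfrac14-2pC_0C\sqrt\eps\,\|\<1>_w\|_{\mathcal C^{-\eta}}$ of $\U(\Dr_N)$ may become negative, so I plan to keep the full reservoir retained on the complementary (overwhelming-probability) event and balance it against the negative part using the Cauchy--Schwarz inequality in $\PP$ and the hypercontractivity bound of Lemma~\ref{LEM:hyp} applied to $\|\<1>_w\|_{\mathcal C^{-\eta}}$ on this rare set, so that the super-exponential smallness $e^{-c\dl^2/\eps}$ beats the at most polynomial-in-$c_{w,N}$ (hence poly-logarithmic-in-$N$) crude control of $\U(\Dr_N)$ and $\Phi_N$ obtained after completing the square in $\int:\!v_N^2\!:_w$. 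I expect this balancing on the atypical-data event — the interplay of the $\eps^{-1/2}$-scale cubic, the $\mathcal C^{-\eta}$-cutoff, and the reservoir $\U$ — to be the main technical obstacle. Collecting the two contributions and fixing $\eps_0=\eps_0(\dl,p)$ small enough then finishes the proof.
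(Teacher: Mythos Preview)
Your approach diverges from the paper's in how the indicator $\ind_{\{\|\sqrt\eps v\|_{\mathcal C^{-\eta}}<\dl\}}$ is exploited, and the divergence leads to a genuine gap on the rare event you yourself flag as ``the main technical obstacle.''

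The paper does not keep the hard cutoff inside the Bou\'e--Dupuis functional. Instead it uses the elementary bound $\ind_{\{|x|\le K\}}\le e^{-A|x|^2}e^{AK^2}$ to replace the indicator by a soft penalty $e^{-A\eps\|v_N\|_{\mathcal C^{-\eta}}^2}$. After the variational formula this produces a \emph{deterministic} positive term $\tfrac{A}{2}\|\Dr_N\|_{\mathcal C^{-\eta}}^2$ (up to $-2\|\<1>_{w,N}\|_{\mathcal C^{-\eta}}^2$, which has bounded expectation). The cubic is then handled not through the cutoff but by pointwise completion of the square with the quartic,
\[
\frac{\eps p}{4+2\zeta}\int_{\T^2}\Dr_N^4\,dx+\sqrt\eps\,p\int_{\T^2}\Dr_N^3\,w\,dx\ \ge\ -\,C(p,\zeta)\int_{\T^2}\Dr_N^2\,dx,
\]
and the resulting $-p\|\Dr_N\|_{L^2}^2$ loss is absorbed on the Fourier side by $\tfrac{A}{2}\|\Dr_N\|_{\mathcal C^{-\eta}}^2+\tfrac12\|(-\Dl+2)^{1/2}\Dr_N\|_{L^2}^2$, choosing $A>2(p-1)$. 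This works for every $\dl>0$ (the $\dl$-dependence enters only through the harmless factor $e^{A\dl^2}$), and no splitting into good/rare data is needed.

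In your scheme, the cubic bound $\sqrt\eps\,p|\!\int\Dr_N^3|\les p(C\dl+\sqrt\eps\,\|\<1>_w\|_{\mathcal C^{-\eta}})\,\U(\Dr_N)$ has a \emph{random} prefactor. On $\{\sqrt\eps\,\|\<1>_w\|>\dl\}$ this prefactor can exceed $1$, so the coefficient of $\U(\Dr_N)$ in the bracket becomes negative. Since the drift $\dr\in\Ha$ is adapted, nothing prevents it from being large precisely on paths where $\|\<1>_w(t)\|$ grows early; hence $\U(\Dr_N)\ind_{\text{rare}}$ is not controlled by $\E[\U(\Dr_N)]$, and your proposed Cauchy--Schwarz step would require a bound on $\E[\U(\Dr_N)^2]$ that you do not have uniformly in $\dr$. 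The ``polynomial-in-$c_{w,N}$ crude control'' you invoke does not apply to $\U(\Dr_N)$, which is a drift-dependent quantity, not a functional of the noise alone. A secondary issue is that your argument forces $\dl\le\dl_0(p)$, whereas the lemma is stated for arbitrary $\dl>0$.

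The fix is exactly the paper's: trade the hard cutoff for the soft penalty before applying Bou\'e--Dupuis, complete the square pointwise on the cubic, and let the free parameter $A=A(p)$ absorb the low-frequency $L^2$ loss.
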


\begin{proof}[Proof of Lemma \ref{LEM:UNIF}]
Note that
\begin{align}
 \ind_{\{|\,\cdot \,| \le K\}}(x) \le \exp\big( -  A |x|^\gamma\big) \exp(A K^\g)
\label{H6}
\end{align}

\noi
for any $K, A , \g > 0$. Hence, in order to prove \eqref{UNIFORM}, it suffices to prove that there exists $\eps_0>0$ such that for any $0< \eps \le \eps_0$, 
\begin{align}
\wt Z_{N,A}&=\E_{\mu_{w} }\bigg[  \exp\Big\{-\frac {\eps p }{4} \H_4(v_N,c_{w,N},c_N) -\sqrt{\eps }p \H_3(v_N,c_{w,N},c_N)\cdot w   -A\| \sqrt{\eps} v_N   \|_{C^{-\eta}}^2 \Big\}   \bigg] \notag \\
&=\E_{\PP }\bigg[  \exp\Big\{-\frac {\eps p }{4} \H_4(\<1>_{w,N},c_{w,N},c_N) -\sqrt{\eps }p \H_3(\<1>_{w,N},c_{w,N},c_N)\cdot w   -A\| \sqrt{\eps} \<1>_{w,N}   \|_{C^{-\eta}}^2 \Big\}   \bigg] \notag \\
&\le C_{\dl,p}<\infty,
\label{UNE1}
\end{align}

\noi
uniformly in $N\ge 1$, where $A=A(p)$ will be specified later. By the Bou\'e-Dupuis variational formula (Lemma \ref{LEM:var3}), we have
\begin{align}
-\log \wt Z_{N,A}&=\inf_{\dr \in \Ha} \E_\PP\bigg[   \frac{\eps p}{4}\H_4(\<1>_{w,N}+\Dr_N,c_{N,w}, c_N)+ \sqrt{\eps}p\H_3(\<1>_{w,N}+\Dr_N,c_{w,N}, c_N) \notag\\
&\hphantom{XXXXXX}+A\|\<1>_{w,N}+\Dr_N \|_{C^{-\eta}}^2+\frac{1}{2} \int_{0}^1 \|\dr(t) \|_{L^2}^2 dt \bigg] \notag \\
&=\inf_{\dr \in \Ha} \E_{\PP} \big[ \W_N(\dr)\big]
\label{UNE2}
\end{align}

\noi
where $\Dr(t)$ is defined by 
\begin{align}
\Dr(t) = \int_0^t \big(-\Dl+\nb^{(2)}V(w)\big)^{-\frac 12} \dr(t') dt'.
\end{align}

\noi
In view of the Bou\'e-Dupuis formula (Lemma \ref{LEM:var3}), 
it suffices to  establish a  lower bound on $\W_N(\dr)$ in \eqref{UNE2}, uniformly in $N \in \N$ and  $\dr \in \Ha$.
From \eqref{H4}, \eqref{H3}, and the identity
\begin{align*}
H_k(x+y; \s )&  = 
\sum_{\l = 0}^k
\begin{pmatrix}
k \\ \l
\end{pmatrix}
 x^{k - \l} H_\l(y; \s),
\end{align*}

\noi
where $H_k(x;\s)$ is the Hermite polynomial of degree $k$, we have
\begin{align}
\begin{split}
\frac{\eps p}{4}\H_4 (\<1>_{w,N} + \Dr_N,c_{w,N},c_N)  & = 
\frac {\eps p}4\int_{\T^2}  \<4>_{w,N}  dx
+ \frac {3\eps p}4 \int_{\T^2}  \<3>_{w,N} \Dr_N dx+\frac {3\eps p}2\int_{\T^2}  \<2>_{w,N}  \Dr_N^2 dx
\\
&\hphantom{X}
+ \frac{3\eps p}{4} \int_{\T^2} \<1>_{w,N}  \Dr_N^3 dx
+ \frac {\eps p}4 \int_{\T^2} \Dr_N^4 dx, \\
&\hphantom{X}
-\frac{3\eps p}{2}(c_{N}-c_{w,N}) \int_{\T^2} \<2>_{w,N} dx-3\eps p(c_{N}-c_{w,N}) \int_{\T^2} \<1>_{w,N} \Dr_N dx\\
&\hphantom{X}
-\frac{3\eps p}{2}(c_{N}-c_{w,N})\int_{\T^2}  \Dr_N^2 dx
\end{split}
\label{H4a}
\end{align}	

\noi
and
\begin{align}
\sqrt{\eps}p\H_3(\<1>_{w,N}+ \Dr_N, c_{w,N}, c_N)  & = 
\sqrt{\eps}p\int_{\T^2}  \<3>_{w,N}  dx
+ 3\sqrt{\eps}p \int_{\T^2}  \<2>_{w,N}  \Dr_N dx+3\sqrt{\eps}p\int_{\T^2} \<1>_{w,N}  \Dr_N^2 dx \notag \\
&\hphantom{X}
-3\sqrt{\eps}p(c_{N}-c_{w,N})\int_{\T^3}(\<1>_{w,N}+\Dr_N )dx + \sqrt{\eps}p \int_{\T^2} \Dr_N^3 dx.
\label{H3b}
\end{align}

\noi
Moreover, since 
\begin{align*}
(a+b)^2
\ge \frac 12 b^2 - 2a^2
\end{align*}
	
\noi
for any $a,b,c \in \R$, we have 
\begin{align}
A\| \<1>_{w,N}+\Dr_N \|_{C^{-\eta}}^2 \ge \frac{A}{2} \| \Dr_N \|^2_{C^{-\eta} }-2 \|\<1>_{w,N} \|_{C^{-\eta}}^2.
\label{ANCOER}
\end{align}

\noi
The main strategy now is to establish a pathwise lower bound on $\W_N(\dr)$ in~\eqref{UNE2}, 
uniformly in $N \in \N$ and $\dr \in \Ha$, by making use of the positive terms:
\begin{equation}
\E \bigg[\frac {\eps p} 4\| \Dr_N\|_{L^4}^4 + \frac A2\| \Dr_N \|^2_{H^{-\eta}}    + \frac{1}{2} \int_0^1 \| \dr(t) \|_{L^2_x}^2 dt\bigg]
\label{v_N1}
\end{equation}

\noi
coming from \eqref{H4a} and \eqref{H3b}. Then, by applying Lemmas \ref{LEM:Cor00} and \ref{LEM:Dr1} to \eqref{UNE2} together with \eqref{H4a} and \eqref{H3b} and choosing $\eps>0 $ sufficiently small, we have
\begin{align*}
&\E\Bigg[ \bigg|\frac{\eps p}{4}\H_4 (\<1>_{w,N} + \Dr_N,c_{w,N},c_N)-\frac{\eps p}{4} \int_{\T^2} \Dr_N^4 dx\bigg| \Bigg]\\
&\le C_0+\zeta' \frac{\eps p}{4} \int_{\T^2} \Dr_N^4 dx+\zeta' \|(-\Dl+\nb^{(2)}V(w) )^{\frac 12}\Dr_N \|_{L^2}^2 
\end{align*}

\noi
and
\begin{align*}
&\E\Bigg[\bigg|\sqrt{\eps}p\H_3(\<1>_{w,N}+ \Dr_N, c_{w,N}, c_N)-p\sqrt{\eps} \int_{\T^2}\Dr_N^3  wdx\bigg| \Bigg]\\
& \le C_0+\zeta' p \sqrt{\eps} \int_{\T^2} |\Dr_N|^3 dx +\zeta' \|(-\Dl+\nb^{(2)}V(w) )^{\frac 12}\Dr_N \|_{L^2}^2
\end{align*}

\noi
for arbitrary small $\zeta'>0$, which implies
\begin{align}
\W_N(\dr) &\ge -C_0+\E\bigg[\frac{\eps p}{4+2\zeta} \int_{\T^2} \Dr_N^4 dx+ 
\frac A2\| \Dr_N \|^2_{C^{-\eta}} +\frac{1}{2+\zeta}\| (-\Dl+\nb^{(2)}V(w) )^{\frac 12}  \Dr_N \|_{L^2}^2 \notag \\
&\hphantom{XXXXXXX}+p\sqrt{\eps} \int_{\T^2}\Dr_N^3  wdx-\zeta p \sqrt{\eps} \int_{\T^2} |\Dr_N|^3 dx \bigg]
\label{J11}
\end{align}

\noi
for arbitrary small $0<\zeta \ll 1$, where $C_0$ arises from Lemma \ref{LEM:Cor00} (i) by computing the expected values of the higher moments for each component of $\Xi_{w, N}=\big(\<4>_{w,N}, \<3>_{w,N}, \<2>_{w,N},  \<1>_{w,N} \big)$ in $C^{-\eta}$, uniformly in $N\ge 1$. We may assume that $w=1$. On the set $\{ \Dr_N \ge 0\}$, we have $\W_N(\dr) \ge -C_0$ uniformly in $N \in \N$ and $\dr \in \Ha$. Hence, it is enough to consider the case $\{\Dr_N <0 \}$. Note that 
\begin{align}
&\frac{\eps p}{4+2\zeta} \int_{\{ \Dr_N < 0\}} \Dr_N^4 dx+p\sqrt{\eps} \int_{\{ \Dr_N < 0\}}\Dr_N^3  wdx-\zeta p \sqrt{\eps} \int_{\{ \Dr_N < 0\}} |\Dr_N|^3 dx \notag \\
&=\frac{\eps p}{4+2\zeta} \int_{ \{ \Dr_N < 0\}   } \Dr_N^4 dx+ p\sqrt{\eps}(1+\zeta) \int_{ \{\Dr_N <0  \}  }\Dr_N^3 dx \notag \\
&=\frac{1}{4+2\zeta}\int_{\{ \Dr_N < 0\} } \Dr_N^2 \Big(  \eps p \Dr_N^2 +(4+2\zeta)p \sqrt{\eps} (1+\zeta) \Dr_N    \Big) dx \notag \\
&= \frac{1}{4+2\zeta}\int_{\{ \Dr_N < 0\} } \Dr_N^2 \bigg\{ \Big(  \sqrt{\eps p} \Dr_N+(2+\zeta) \sqrt{p} (1+\zeta)   \Big)^2  -p(2+\zeta)^2(1+\zeta)^2     \bigg\} dx \notag \\
& \ge -\frac{p(2+\zeta)^2(1+\zeta)^2}{4+2\zeta} \int_{\T^2} \Dr_N^2 dx.
\label{J12}
\end{align}

\noi
Since $\zeta$ is arbitrary small, it suffices to consider
\begin{align}
\frac{A}{2} \| \Dr_N \|_{C^{-\eta}}^2 +\frac{1}{2}\| (-\Dl+\nb^{(2)}V(w) )^{\frac 12}  \Dr_N \|_{L^2}^2 -p \|  \Dr_N\|_{L^2}^2
\label{J13}
\end{align}

\noi
where the first two terms come from \eqref{J11} and the last term arises from \eqref{J12}.
Notice that $\| u\|_{C^{-\eta}} \ges \| u\|_{H^{-2\eta}}$ and so
\eqref{J13} can be written as
\begin{align}
\frac 12 \sum_{n\in \Z^2} \big(A\jb{n}^{-4\eta}+ |n|^2+2-2p \big)|\ft \Dr_N(n) |^2.
\label{J14}
\end{align}

\noi
The worst case happens at the zero frequency and so in order for \eqref{J14} to be nonnegative, $A+2-2p>0$ if and only if $p<\frac{A}{2}+1$. It follows from \eqref{J11}, \eqref{J12}, \eqref{J13}, \eqref{J14}, and choosing $A=A(p)$ sufficiently large that 
\begin{align}
\inf_{N \in \mathbb{N}} \inf_{\dr \in \Ha} \W_N(\dr) 
\geq 
\inf_{N \in \mathbb{N}} \inf_{\dr \in \Ha}
\Big\{ -C_0 +\eqref{J14} \Big\}
\geq - C_0 >-\infty.
\label{UNE3}
\end{align}

\noi
Therefore, the uniform exponential integrability \eqref{UNE1} follows from \eqref{UNE2} and \eqref{UNE3}.

\end{proof}

\begin{remark}\rm\label{REM:UNICON}
The uniform $L^p$-bound in \eqref{UNIFORM} together with the
convergence as a consequence of \eqref{H3c} and \eqref{H4c} implies the following $L^p$-convergence of the density:
\begin{align*}
&\exp\Big\{-\frac {\eps }{4} \H_4(v_N,c_{w,N},c_N) -\sqrt{\eps} \H_3(v_N,c_{w,N},c_N)\cdot w  \Big\} \ind_{ \{ \|\sqrt{\eps}v \|_{C^{-\eta}} <\dl \} }\\
&\too \exp\Big\{-\frac {\eps }{4} \H_4(v,c_{w},c) -\sqrt{\eps} \H_3(v,c_{w},c)\cdot w  \Big\} \ind_{ \{ \|\sqrt{\eps}v \|_{C^{-\eta}} <\dl \} }
\end{align*}

\noi
in $L^p(d\mu_w)$ for any $1\le p<\infty$ as $N\to \infty$. See, for example, \cite[Proposition 1.2]{OTh}.

\end{remark}

We are now ready to take the limit $N\to \infty$ in \eqref{CHA01}.

\begin{lemma}\label{LEM:LNT}
There exists $\eps_0>0$ and $\dl>0$ such that for any function $F: \mathcal{C}^{-\eta}(\T^2;\R) \to \R$ with at most polynomial growth rate, we have 
\begin{align}
&\int_{\{ \textup{dist}_{\mathcal{C}^{-\eta} }(\phi,\mathcal{M}) <\dl  \}   } F(\phi) \exp\bigg\{ -\frac 1\eps \mathcal{V}(\phi)\bigg\} \mu_{\eps}(d\phi) \notag \\
&=\sum_{w \in \M}\Bigg[\dr_{\textup{re}}(w)  \int_{ \{ \| \sqrt{\eps}v \|_{C^{-\eta}} < \dl   \}  } F(w+\sqrt{\eps}v ) \notag \\
&\hphantom{XXXXXXXXX} \times \exp\Big\{-\frac \eps4 \H_4(v,c_{w},c) -\sqrt{\eps} \H_3(v,c_{w},c)\cdot w  \Big\}\mu_{w}(dv)  \Bigg]
\label{limN}
\end{align}

\noi
for every $0<\eps\le \eps_0$, where $\dr_{\textup{re}}(w)$, $\H_3$, and $\H_4$ are defined in \eqref{REDET0}, \eqref{H3c}, and \eqref{H4c}, respectively.

\end{lemma}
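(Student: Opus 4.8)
The plan is to deduce \eqref{limN} from the exact finite-$N$ identity \eqref{CHA01} of Lemma~\ref{LEM:expN}, which holds for every $N\ge1$, every $\eps>0$ and every $0<\dl\le\dl_0$, by letting $N\to\infty$ on both sides. Since $\M=\{-1,+1\}$ is finite it suffices to treat each side term by term. First I would fix $\eps_0>0$ as in Lemma~\ref{LEM:UNIF}, and shrink $\dl$ inside a countable exceptional set so that, besides $\dl\le\dl_0$, the level sets $\{\textup{dist}_{\mathcal{C}^{-\eta}}(\phi,\M)=\dl\}$ and $\{\|\sqrt\eps v\|_{\mathcal{C}^{-\eta}}=\dl\}$ are null for $\mu_\eps$ and for each $\mu_w$, respectively; this is possible because a monotone function has at most countably many discontinuities.

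For the left-hand side of \eqref{CHA01}, which equals $\E_{\mu_\eps}\big[F(\phi_N)\,e^{-\frac1\eps\mathcal{V}(\phi_N)}\ind_{\{\textup{dist}(\phi_N,\M)<\dl\}}\big]$, I would use that $\phi_N=\P_N\phi\to\phi$ in $\mathcal{C}^{-\eta}$, $\mu_\eps$-almost surely, so that $F(\phi_N)\to F(\phi)$ by continuity and $\ind_{\{\textup{dist}(\phi_N,\M)<\dl\}}\to\ind_{\{\textup{dist}(\phi,\M)<\dl\}}$ $\mu_\eps$-almost surely, by the choice of $\dl$; meanwhile $\mathcal{V}(\phi_N)=\mathcal{V}_N(\phi)\to\mathcal{V}(\phi)$ in every $L^p(d\mu_\eps)$ by Proposition~\ref{PROP:NEL}, and, combined with the standard uniform-in-$N$ exponential integrability of $\mathcal{V}_N$ under $\mu_\eps$ (Nelson's bound), this gives $e^{-\frac1\eps\mathcal{V}(\phi_N)}\to e^{-\frac1\eps\mathcal{V}(\phi)}$ in every $L^q(d\mu_\eps)$. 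A routine Vitali argument — the polynomial growth of $F$ against the uniform-in-$N$ Gaussian tails of $\|\phi_N\|_{\mathcal{C}^{-\eta}}$ from Lemma~\ref{LEM:Cor00}(i) supplies the missing uniform integrability — then identifies the limit of the left-hand side of \eqref{CHA01} with the left-hand side of \eqref{limN}.

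For the right-hand side I would fix $w\in\M$ and write the $w$-term as $\dr_{\textup{re},N}(w)\,\E_{\mu_w}[\Psi_N]$ with
\[
\Psi_N:=F(w+\sqrt\eps v_N)\exp\Big\{-\tfrac\eps4\,\H_4(v_N,c_{w,N},c_N)-\sqrt\eps\,\H_3(v_N,c_{w,N},c_N)\cdot w\Big\}\ind_{\{\|\sqrt\eps v_N\|_{\mathcal{C}^{-\eta}}<\dl\}}.
\]
The deterministic prefactor converges, $\dr_{\textup{re},N}(w)\to\dr_{\textup{re}}(w)$, by Lemma~\ref{LEM:redet} and \eqref{REDET0}. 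For the integral, $v_N=\P_N v\to v$ and the Wick powers $\<1>_{w,N},\<2>_{w,N},\<3>_{w,N},\<4>_{w,N}$ converge in $\mathcal{C}^{-\eta}$, $\mu_w$-almost surely (the fact underlying \eqref{H3c}--\eqref{H4c}, together with the convergence of $c_N-c_{w,N}$ from \eqref{diffcov}), so $\H_j(v_N,c_{w,N},c_N)\to\H_j(v,c_w,c)$, $F(w+\sqrt\eps v_N)\to F(w+\sqrt\eps v)$ and the indicator converges, all $\mu_w$-almost surely; hence $\Psi_N\to\Psi_\infty$ $\mu_w$-almost surely. For uniform integrability I would observe that on the support $\{\|\sqrt\eps v_N\|_{\mathcal{C}^{-\eta}}<\dl\}$ of $\Psi_N$ one has $\|w+\sqrt\eps v_N\|_{\mathcal{C}^{-\eta}}\le\|w\|_{\mathcal{C}^{-\eta}}+\dl$, so the polynomially growing $F$ satisfies $|F(w+\sqrt\eps v_N)|\le C_\dl$ there; consequently, for any $p>1$, $\E_{\mu_w}\big[|\Psi_N|^p\big]\le C_\dl^p\,C_{\dl,p}$ uniformly in $N$ by Lemma~\ref{LEM:UNIF} (alternatively one invokes the $L^p$-convergence of the density from Remark~\ref{REM:UNICON}). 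A uniform $L^p$-bound with $p>1$ plus the almost sure convergence gives $\E_{\mu_w}[\Psi_N]\to\E_{\mu_w}[\Psi_\infty]$ by Vitali's theorem, so the $w$-term of \eqref{CHA01} converges to the $w$-term of \eqref{limN}.

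Matching the two limits in \eqref{CHA01} then yields \eqref{limN} for every $0<\eps\le\eps_0$. I expect the only real obstacle to be the uniform-in-$N$ integrability on the right-hand side: this is exactly the uniform exponential integrability supplied by Lemma~\ref{LEM:UNIF} — whose Bou\'e--Dupuis proof is the genuine work — combined with the elementary observation that a polynomially growing $F$ is bounded on the truncated region $\{\|\sqrt\eps v_N\|_{\mathcal{C}^{-\eta}}<\dl\}$. Everything else is bookkeeping: passing Gaussian-chaos limits through continuous functionals and invoking Vitali's theorem. The one minor subtlety, the null-set status of the boundaries of the cutoff regions (needed for almost sure convergence of the indicators), is removed by the choice of $\dl$ outside a countable set, which is harmless since the statement only asserts existence of a suitable $\dl$.
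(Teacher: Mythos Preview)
Your proposal is correct and follows essentially the same approach as the paper: the paper's proof is a one-line citation of Lemmas~\ref{LEM:expN}, \ref{LEM:redet}, \ref{LEM:UNIF}, the convergences \eqref{H3c}--\eqref{H4c}, and Remark~\ref{REM:UNICON}, and you have simply unpacked these references into the natural Vitali-type limiting argument. Your additional care with the $\dl$-choice to ensure the boundary sets are null is a reasonable technical refinement that the paper leaves implicit.
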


\begin{proof}
It follows from Lemmas \ref{LEM:expN}, \ref{LEM:redet}, \eqref{H3c}, \eqref{H4c}, Lemma \ref{LEM:UNIF}, and Remark \ref{REM:UNICON}, we can take the limit $N\to \infty$ in \eqref{CHA01}. 
\end{proof}

\begin{remark}\rm
Thanks to Lemma \ref{LEM:LNT}, the coefficients appearing in the expansion in Theorem \ref{THM:1} or \eqref{coeff} converge to finite limits as the ultraviolet cutoff $\P_N$  is removed. Namely, we can obtain the ultraviolet stability.
\end{remark}

\subsection{Concentration of the measures on model space}\label{SUBSEC:CMM}
In this subsection, we study the concentration of the measures \eqref{H3UN} on model space (Lemma \ref{LEM:out1}). This is the key step to get the error estimates of a remainder term in the asymptotic expansion for the $\Phi^4_2$-measure (see Lemma \ref{LEM:Remain}).

We define the model space as follows: for $w\in \M$ and $v\in C^{-\eta}$
\begin{align}
\VVert{\mathbb{V}_w }:= \| v \|_{C^{-\eta}} +\| :\! v^2 \!:_w \|_{C^{-\eta}}^{\frac 12}+\| :\! v^3 \!:_w \|_{C^{-\eta}}^{\frac 13}+\| :\! v^4 \!:_w \|_{C^{-\eta}}^{\frac 14},
\label{MDWI}
\end{align}

\noi
where 
\begin{align*}
\mathbb{V}_w=\mathbb{V}_w[v]:=(v, \; :\! v^2 \!:_w \;, \; :\! v^3 \!:_w  \;, \; :\! v^4 \!:_w )
\end{align*}

\noi
is an enhanced data set. Recall that the Wick powers are taken with respect to the new reference measure $\mu_w(dv)$ with covariance $(-\Dl+\nb^{(2)}V(w))^{-1}$. We first introduce the following Fernique’s theorem on the model space.

\begin{lemma}\label{LEM:Fern}
There exists $\al >0$ such that for any $w \in \M$, we have
\begin{align*}
\int \exp\Big\{\al \VVert{\mathbb{V}_w}^2 \Big\} \mu_w(dv)<\infty
\end{align*}

\noi
where $\mu_w(dv)$ is the Gaussian measure with covariance $(-\Dl+\nb^{(2)}V(w) )^{-1}$.
In particular, we have
\begin{align*}
\int \exp\Big\{\al \| :\! v^k \!:_w \|_{C^{- \eta }}^{\frac{2}{k}} \Big\} d\mu_w(v)<\infty
\end{align*}

\noi
for every $k \ge 1$.

\end{lemma}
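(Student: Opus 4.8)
The statement I need to prove is that there exists $\al > 0$, uniform over the finite set $w \in \M = \{-1, +1\}$, such that $\int \exp\{\al \VVert{\mathbb{V}_w}^2\} \mu_w(dv) < \infty$, where $\VVert{\mathbb{V}_w}$ is the sum of $\| v \|_{C^{-\eta}}$ and the $k$-th roots of the norms of the Wick powers $:\! v^k \!:_w$ for $k = 2,3,4$. The key point is the homogeneity: the $k$-th Wick power lives in the $k$-th Wiener chaos of $\mu_w$, so its $C^{-\eta}$-norm, raised to the power $2/k$, scales like a second-chaos (Gaussian-like) quantity, which is exactly what makes a Fernique-type Gaussian tail available.

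The plan is as follows. First I would recall the hypercontractivity/Wiener chaos estimate (Lemma \ref{LEM:hyp}): for $X \in \H_{\le k}$ one has $\|X\|_{L^p(\O)} \le (p-1)^{k/2} \|X\|_{L^2(\O)}$ for $p \ge 2$. Since the covariance operator $(-\Dl + \nb^{(2)}V(w))^{-1} = (-\Dl + 2)^{-1}$ is actually \emph{independent} of $w \in \M$ (as $\nb^{(2)}V(w) = 3|w|^2 - 1 = 2$ for $w = \pm 1$), the Gaussian measures $\mu_w$ all coincide; in particular uniformity over $w$ is automatic and I may fix $w$. Next, by the standard Wick-power moment bounds (the analogue of Lemma \ref{LEM:Cor00}(i) for the measure $\mu_w$, obtained by the same Littlewood–Paley plus Gaussian hypercontractivity computation — $\| :\! v^k \!:_w \|_{C^{-\eta}}$ has all moments, growing at most like $p^{k/2}$), one gets, for each $k \in \{1,2,3,4\}$ and $q \ge 2$,
\begin{align*}
\E_{\mu_w}\Big[ \| :\! v^k \!:_w \|_{C^{-\eta}}^{q} \Big] \le C_k^{q}\, q^{kq/2},
\end{align*}
so that, writing $Y_k := \| :\! v^k \!:_w \|_{C^{-\eta}}^{2/k}$, we obtain $\E_{\mu_w}[Y_k^{q}] \le C_k^{2q/k} q^{q}$ for all $q \ge 2$. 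A random variable with moments growing like $q^q$ up to a geometric factor has a Gaussian tail: by Markov/Chebyshev and optimizing in $q$, $\PP(Y_k > \lambda) \le \exp\{-c_k \lambda^2\}$ for $\lambda$ large, hence $\E_{\mu_w}[\exp\{\al_k Y_k^2\}] < \infty$ for $\al_k$ small enough. This already gives the "in particular" clause.

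For the full statement, since $\VVert{\mathbb{V}_w} = \sum_{k=1}^4 Y_k^{1/2}$ wait — more precisely $\VVert{\mathbb{V}_w} = \| v\|_{C^{-\eta}} + \sum_{k=2}^4 Y_k^{1/2}$, hmm, let me restate: $\VVert{\mathbb{V}_w}$ is a finite sum of the four nonnegative quantities $Z_k := \| :\! v^k \!:_w\|_{C^{-\eta}}^{1/k}$ (with $Z_1 = \|v\|_{C^{-\eta}}$), and $Z_k^2 = Y_k$ has a Gaussian tail by the above. Then $\VVert{\mathbb{V}_w}^2 \le 4 \sum_{k=1}^4 Z_k^2 = 4\sum_{k=1}^4 Y_k$ by Cauchy–Schwarz, so
\begin{align*}
\int \exp\Big\{\tfrac{\al}{4} \cdot 4 \sum_{k=1}^4 Y_k \Big\} \mu_w(dv)
\le \prod_{k=1}^4 \Big( \int \exp\{ 4 \al\, Y_k\} \mu_w(dv)\Big)^{1/4}
\end{align*}
by the generalized Hölder inequality, and each factor is finite once $4\al < \min_k \al_k$. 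Choosing $\al$ accordingly completes the proof. The main obstacle — and it is really only a bookkeeping matter here — is establishing the Wick-power moment bounds $\E_{\mu_w}[\| :\! v^k \!:_w\|_{C^{-\eta}}^q] \lesssim C_k^q q^{kq/2}$ with the correct power of $q$; this is the $\mu_w$-analogue of Lemma \ref{LEM:Cor00}(i) and follows from the Besov characterization of $C^{-\eta}$ via Littlewood–Paley blocks, Minkowski's integral inequality, and the hypercontractivity estimate of Lemma \ref{LEM:hyp} applied chaos-block by chaos-block, exactly as in the references \cite{OTh2, GKO0} cited there. I would simply remark that this computation is standard and identical to the one for $\mu$, the only change being the replacement of the covariance $(1-\Dl)^{-1}$ by $(-\Dl+2)^{-1}$, which does not affect any of the estimates.
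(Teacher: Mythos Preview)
Your approach is correct in substance and more explicit than the paper's, which does not give a proof at all but simply refers to \cite[Theorem~2]{FO} (a generalized Fernique theorem) and \cite[Appendix~D]{FK}. Your direct route via hypercontractivity moment bounds on each Wick power, followed by H\"older to combine the four terms, is the standard hands-on alternative to invoking those abstract results, and it works here without difficulty. The observation that $\nb^{(2)}V(w)=2$ for both $w\in\M$, so that $\mu_w$ is in fact independent of $w$, is a nice simplification.

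There is one small slip to fix. From $\E_{\mu_w}[Y_k^q]\le C^q q^q$ you only get an \emph{exponential} tail for $Y_k$, namely $\PP(Y_k>\lambda)\le e^{-c_k\lambda}$ after optimizing in $q$ (not the Gaussian tail $e^{-c_k\lambda^2}$ you wrote), and hence $\E_{\mu_w}[\exp\{\al_k Y_k\}]<\infty$ for small $\al_k$ --- not $\E_{\mu_w}[\exp\{\al_k Y_k^2\}]$. This is harmless: the ``in particular'' clause asks precisely for $\E_{\mu_w}[e^{\al Y_k}]<\infty$, and your H\"older step for the full statement uses $\VVert{\mathbb{V}_w}^2\le 4\sum_{k=1}^4 Y_k$, so again only exponential integrability of each $Y_k$ is required. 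With this correction the argument is complete. (Note also that the stronger claim $\E[e^{\al Y_1^2}]=\E[e^{\al\|v\|_{C^{-\eta}}^4}]<\infty$ is actually false for a Gaussian field, so the correction is not merely cosmetic.)
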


For the proof of Lemma \ref{LEM:Fern}, see \cite[Theorem 2]{FO}  or \cite[Appendix D]{FK}. We are now ready to prove the concentration on the model space.

\begin{lemma}\label{LEM:out1}
Let $\dl>0$, $\eta>0$ and $F: \mathcal{C}^{-\eta}(\T^2;\R) \to \R$ be a function of at most polynomial growth. Then, there exists a constant $c>0$ and  $\eps_0 >0$ such that 
for $0<\eps\le \eps_0$, we have
\begin{align}
&\int_{ \big\{ \VVert{\mathbb{V}[\sqrt{\eps}v]   } \ge \dl, \; \| \sqrt{\eps}v \|_{C^{-\eta}} < \dl   \big\}    } F(w+\sqrt{\eps}v ) \exp\Big\{-\frac \eps4 \H_4(v,c_{w},c) -\sqrt{\eps} \H_3(v,c_{w},c)\cdot w  \Big\}\mu_{w}(dv) \notag \\
&\les  \exp\Big\{ -\frac{c\dl^2}{\eps}  \Big\},
\label{H3UN}
\end{align}

\noi
for every $w\in \M$, where $\H_4$ and $\H_3$ are as  \eqref{H3c} and \eqref{H4c}.
\end{lemma}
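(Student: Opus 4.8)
The plan is to deduce \eqref{H3UN} from two facts already at hand: the uniform exponential integrability of the Gibbs density on $\{\|\sqrt{\eps}v\|_{C^{-\eta}}<\dl\}$ from Lemma \ref{LEM:UNIF} (used with exponent $p=2$), and Fernique's theorem on the enhanced model space, Lemma \ref{LEM:Fern}. First I would note that on the domain of integration one has $\|\sqrt{\eps}v\|_{C^{-\eta}}<\dl$, hence $\|w+\sqrt{\eps}v\|_{C^{-\eta}}\le\|w\|_{C^{-\eta}}+\dl$ is bounded by a constant $C_\dl$ (recall $w\in\{-1,+1\}$), so the polynomial growth of $F$ gives $|F(w+\sqrt{\eps}v)|\le C_\dl$ there. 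Setting $A_\eps:=\big\{\VVert{\mathbb{V}[\sqrt{\eps}v]}\ge\dl\big\}\cap\big\{\|\sqrt{\eps}v\|_{C^{-\eta}}<\dl\big\}$, the Cauchy--Schwarz inequality, followed by the bound $|F|\le C_\dl$ and enlarging the domain of the resulting density integral to the superset $\{\|\sqrt{\eps}v\|_{C^{-\eta}}<\dl\}$ (whose integrand is nonnegative), yields
\begin{align*}
&\bigg|\int_{A_\eps}F(w+\sqrt{\eps}v)\,e^{-\frac{\eps}{4}\H_4(v,c_w,c)-\sqrt{\eps}\H_3(v,c_w,c)\cdot w}\,\mu_w(dv)\bigg| \\
&\qquad\le C_\dl\,\bigg(\int_{\{\|\sqrt{\eps}v\|_{C^{-\eta}}<\dl\}}e^{-\frac{\eps}{2}\H_4(v,c_w,c)-2\sqrt{\eps}\H_3(v,c_w,c)\cdot w}\,\mu_w(dv)\bigg)^{\frac12}\mu_w(A_\eps)^{\frac12}.
\end{align*}
Since the exponent on the right is exactly the $p=2$ case of \eqref{UNIFORM} — for the limiting objects $\H_3(v,c_w,c)$, $\H_4(v,c_w,c)$, obtained by letting $N\to\infty$ using \eqref{H3c}--\eqref{H4c} and the uniform-in-$N$ bound — Lemma \ref{LEM:UNIF} bounds the middle factor by $C_{\dl,2}^{1/2}$, uniformly for $0<\eps\le\eps_0$.

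It then remains to control $\mu_w(A_\eps)$, which is where the model-space concentration enters. The structural input is the homogeneity of the Wick renormalisation: the enhanced data of the rescaled field is $\mathbb{V}[\sqrt{\eps}v]=\big(\sqrt{\eps}\,v,\ \eps\!:\!v^2\!:_w,\ \eps^{3/2}\!:\!v^3\!:_w,\ \eps^{2}\!:\!v^4\!:_w\big)$ — the same scaling exhibited by the enhanced data set $\Xi^\eps_N$ used earlier — so that by the definition \eqref{MDWI} of the model norm, $\VVert{\mathbb{V}[\sqrt{\eps}v]}=\sqrt{\eps}\,\VVert{\mathbb{V}_w[v]}$. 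Hence $A_\eps\subseteq\big\{\VVert{\mathbb{V}_w[v]}\ge\dl/\sqrt{\eps}\big\}$, and Chebyshev's inequality together with Lemma \ref{LEM:Fern} produces a constant $\al>0$, independent of $w\in\M$, with
\begin{align*}
\mu_w(A_\eps)\ \le\ e^{-\frac{\al\dl^2}{\eps}}\int e^{\al\VVert{\mathbb{V}_w[v]}^2}\,\mu_w(dv)\ \les\ e^{-\frac{\al\dl^2}{\eps}}.
\end{align*}
Inserting this into the previous display and absorbing the $\eps$-independent constants gives \eqref{H3UN} with, say, $c=\al/4$, and $\eps_0$ the minimum of the two thresholds coming from Lemmas \ref{LEM:UNIF} and \ref{LEM:Fern}.

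The only genuinely delicate point — and the reason for splitting with exponent exactly $2$ — is that the density $e^{-\frac{\eps}{4}\H_4(v,c_w,c)}$ cannot be controlled pointwise: $\H_4$ contains $\int_{\T^2}:\!v^4\!:_w\,dx$, which is unbounded below, so no crude estimate on the integrand is available, and it is precisely the Bou\'e--Dupuis argument behind Lemma \ref{LEM:UNIF} that tames it. Beyond this, the argument is a packaging of Lemmas \ref{LEM:Fern} and \ref{LEM:UNIF}; the one piece requiring care is verifying that \eqref{UNIFORM} passes to the limit $N\to\infty$ against $\H_3(v,c_w,c)$ and $\H_4(v,c_w,c)$, which follows from \eqref{H3c}--\eqref{H4c}, the uniform-in-$N$ bound, and Fatou's lemma. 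I expect this bookkeeping, rather than any new estimate, to be the main (and essentially only) thing that needs attention.
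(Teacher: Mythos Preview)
Your proof is correct and follows essentially the same approach as the paper: split via H\"older's inequality, control the density factor by Lemma \ref{LEM:UNIF}, and extract the exponential decay from Lemma \ref{LEM:Fern} together with the homogeneity $\VVert{\mathbb{V}[\sqrt{\eps}v]}=\sqrt{\eps}\,\VVert{\mathbb{V}_w[v]}$ and Chebyshev. The only (minor) difference is that you exploit the constraint $\|\sqrt{\eps}v\|_{C^{-\eta}}<\dl$ to bound $F$ by a constant up front, so your second H\"older factor is the pure mass $\mu_w(A_\eps)^{1/2}$, whereas the paper keeps $|F|^q$ inside that factor and absorbs its polynomial growth into the Fernique exponential; this is a cosmetic simplification, not a different argument.
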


\begin{proof}
By using H\"older's inequality with $\frac{1}{p}+\frac{1}{q}=1$, where $1< p,q<\infty$, we have that for any $w \in \M$
\begin{align}
&\bigg| \int_{ \big\{ \VVert{\mathbb{V}[\sqrt{\eps}v]   } \ge \dl   \big\}    } F(w+\sqrt{\eps}v ) \exp\Big\{-\frac \eps4 \H_4(v,c_{w},c) -\sqrt{\eps} \H_3(v,c_{w},c)\cdot w  \Big\}\mu_{w}(dv)  \bigg| \notag \\
&\les \bigg(  \int_{ \big\{ \VVert{\mathbb{V}[\sqrt{\eps}v]   } \ge \dl   \big\}  } |F(w+\sqrt{\eps}v )|^q d\mu_{w}(v) \bigg)^{\frac 1q}  \notag \\
&\hphantom{X}\times \bigg(  \int_{ \big\{ \VVert{\mathbb{V}[\sqrt{\eps}v]   } \ge \dl   \big\}    }  \exp\Big\{-\frac { p \eps}{4} \H_4(v,c_{w},c) -p\sqrt{\eps} \H_3(v,c_{w},c)\cdot w  \Big\}\mu_{w}(dv)  \bigg)^{\frac 1p}.
\label{J15}
\end{align}

\noi
By exploiting the polynomial growth rate of $F$ and the restriction $\big\{ \VVert{\mathbb{V}[\sqrt{\eps}v]   } \ge \dl   \big\}$, we get
\begin{align}
&  \int_{ \big\{ \VVert{\mathbb{V}[\sqrt{\eps}v]   } \ge \dl   \big\}  } |F(w+\sqrt{\eps}v )|^q \mu_{w}(dv) \notag \\
&\les \exp\Big\{ -\frac{\g \dl^2}{\eps} \Big\} \int \exp\Big\{ (\g+c\eps)  \VVert{\mathbb{V}[v]   }^2    \Big\}    \mu_w(dv).
\label{J16}
\end{align}

\noi
Thanks to Lemma \ref{LEM:Fern}, by choosing both $\g>0$ and $\eps>0$ sufficiently small, we obtain
\begin{align}
 \int \exp\Big\{ (\g+c\eps)  \VVert{\mathbb{V}[v]   }^2    \Big\}    \mu_w(dv)<\infty
\label{J17}
\end{align}

\noi
for any $w\in \M=\{-1,1\}$. It follows from Lemma \ref{LEM:UNIF} that if $0<\eps\le \eps_0$, then 
\begin{align}
\int_{ \big\{ \VVert{\mathbb{V}[\sqrt{\eps}v]   } \ge \dl   \big\}    } \exp\Big\{-\frac {p \eps }{4} \H_4(v,c_{w},c) - p\sqrt{\eps} \H_3(v,c_{w},c)\cdot w  \Big\}\mu_{w}(dv) \le C_{p,\dl}<\infty
\label{J18}
\end{align}

\noi
for any $w\in \M$. By combining \eqref{J15}, \eqref{J16}, \eqref{J17}, and \eqref{J18}, we obtain the result.

\end{proof}


Thanks to Lemma \ref{LEM:out1}, we reduce the integral in \eqref{limN} as follows
\begin{align}
&\int_{\{ \textup{dist}_{\mathcal{C}^{-\eta} }(\phi,\mathcal{M}) <\dl  \}   } F(\phi) \exp\bigg\{ -\frac 1\eps \mathcal{V}(\phi)\bigg\} \mu_{\eps}(d\phi) \notag \\
&=O\bigg(\exp\Big\{ -\frac{c\dl^2}{\eps}  \Big\} \bigg)+\sum_{w \in \M}\Bigg[\dr_{\textup{re}}(w)  \int_{ \big\{ \VVert{\mathbb{V}[\sqrt{\eps}v]   } < \dl   \big\} \cap \big\{ \| \sqrt{\eps}v \|_{C^{-\eta}} < \dl   \big\}   } F(w+\sqrt{\eps}v ) \notag \\
&\hphantom{XXXXXXXXXXXXXXX} \times \exp\Big\{-\frac \eps4 \H_4(v,c_{w},c) -\sqrt{\eps} \H_3(v,c_{w},c)\cdot w  \Big\}\mu_{w}(dv)  \Bigg]
\label{limN1}
\end{align}

\noi
for every $0<\eps\le \eps_0$, where $\eps_0$ is as in Lemma \ref{LEM:UNIF}.

For notational convenience, for any $F \in C^{k+1}(C^{-\eta}(\T^2);\R)$ with derivatives of at most polynomial growth rate,
we define  $G:\M \times C^{-\eta}(\T^2)\to \R$ as follows
\begin{align}
G(w, \sqrt{\eps}v)=F(w+\sqrt{\eps}v ) \exp\Big\{-\frac \eps4 \H_4(v,c_{w},c) -\sqrt{\eps} \H_3(v,c_{w},c)\cdot w  \Big\}
\label{W11}
\end{align}

\noi
Thanks to the regularity asumption on $F$, by using Taylor's formula, it is allowed to exapnd $G$ up to order $k$ in terms of $v$
\begin{align}
G(w, \sqrt{\eps}v):=\sum_{j=0}^k \frac{\sqrt{\eps}^j}{j!} Q_j(w,v)+R_{k+1}(w,\sqrt{\eps} v)
\label{W12}
\end{align}

\noi
where $R_{k+1}(w,\sqrt{\eps}v)$ is the Taylor remainder and $Q_j(w,v)$ is the $j$-th Fr\'echet derivative of $G(w,v)$ in the coordinate $v$
\begin{align}
Q_j(w,v):=D^{(j)}G(w,0)(v,\dots,v).
\label{FRE0}
\end{align}

\noi
To simplify, we do not highlight the dependence of $Q_j$ and $R_{k+1}$ on $F$ since it will be evident from the context. It follows from the polynomial growth of $F$, its Fr\'echet derivatives, $\H_3$, $\H_4$, and the compactness of $\M=\{-1,1\}$ that there exists $r>0$ and a constant $C>0$ independent of $w\in \M$ such that   
\begin{align}
|Q_j(w,v)| \le C  (1+\VVert{\mathbb{V}[v]} )^r
\label{POLY0}
\end{align}

\noi
for every $j=1,\dots,k$ and
\begin{align}
|R_{k+1}(w,\sqrt{\eps} v)| \le C \sqrt{\eps}^{k+1} \VVert{\mathbb{V}[v]}^r_{C^{-\eta}}\exp\Big\{\frac \eps4 \big|\H_4(v,c_{w},c)
\big| +\sqrt{\eps} \big|\H_3(v,c_{w},c)\big|  \Big\}.
\label{rem01}
\end{align}

We are now ready to prove the following expansion formula on $\big\{ \VVert{\mathbb{V}[\sqrt{\eps}v]   } < \dl   \big\} $ and its error estimate.

\begin{lemma}\label{LEM:Remain}
Let $\eta>0$ and $F \in C^{k+1}(C^{-\eta}(\T^2);\R)$ with derivatives of at most polynomial growth. Then, there exists $\dl >0$ and $\eps_0=\eps_0(\dl)$ such that
\begin{align}
&\int_{ \big\{ \VVert{\mathbb{V}[\sqrt{\eps}v]   } < \dl   \big\}  } F(w+\sqrt{\eps}v ) \exp\Big\{-\frac \eps4 \H_4(v,c_{w},c) -\sqrt{\eps} \H_3(v,c_{w},c)\cdot w  \Big\}\mu_{w}(dv) \notag \\
&=\sum_{j=0}^k \frac{\sqrt{\eps}^j}{j!}  \int_{ \big\{ \VVert{\mathbb{V}[\sqrt{\eps}v]   } < \dl   \big\}  }   Q_j(w,v)  \mu_{w}(dv)+\int_{ \big\{ \VVert{\mathbb{V}[\sqrt{\eps}v]   } < \dl   \big\}  } R_{k+1}(w, \sqrt{\eps} v) \mu_{w}(dv)
\label{ASM0}
\end{align}

\noi
where $Q_j(w,v)$ and $R_{k+1}(w, \sqrt{\eps}v)$ are given in \eqref{W12} and
\begin{align}
\bigg|  \int_{ \big\{ \VVert{\mathbb{V}[\sqrt{\eps}v]   } < \dl   \big\}  } R_{k+1}(w, \sqrt{\eps} v)  \mu_{w}(dv)  \bigg| \les  \sqrt{\eps}^{k+1}
\label{ERR1}
\end{align}

\noi
for every $0<\eps \le \eps_0$.

\end{lemma}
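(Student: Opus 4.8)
The plan is to deduce the statement directly from the Taylor expansion \eqref{W12} of the density $G(w,\sqrt{\eps}v)$, together with the polynomial growth bounds \eqref{POLY0}--\eqref{rem01} and the integrability supplied by Lemma \ref{LEM:Fern} and Lemma \ref{LEM:UNIF}. Concretely, inserting \eqref{W12} into the left-hand side of \eqref{ASM0} and integrating term by term over the set $\big\{\VVert{\mathbb{V}[\sqrt{\eps}v]}<\dl\big\}$ yields exactly \eqref{ASM0}; the only substantive content is therefore the remainder estimate \eqref{ERR1}. So the bulk of the proof is to show that
\begin{align*}
\bigg|\int_{\{\VVert{\mathbb{V}[\sqrt{\eps}v]}<\dl\}} R_{k+1}(w,\sqrt{\eps}v)\,\mu_w(dv)\bigg|\les \sqrt{\eps}^{k+1}
\end{align*}
uniformly in $w\in\M$ (which is a finite set, so uniformity is automatic once we have a bound for each $w$) and in $0<\eps\le\eps_0$.

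For the remainder bound, I would start from \eqref{rem01}, which gives the pointwise estimate $|R_{k+1}(w,\sqrt{\eps}v)|\le C\sqrt{\eps}^{k+1}\VVert{\mathbb{V}[v]}^r\exp\big\{\tfrac\eps4|\H_4(v,c_w,c)|+\sqrt{\eps}|\H_3(v,c_w,c)|\big\}$. Pulling out the factor $\sqrt{\eps}^{k+1}$, it remains to bound
\begin{align*}
\int_{\{\VVert{\mathbb{V}[\sqrt{\eps}v]}<\dl\}} \VVert{\mathbb{V}[v]}^r \exp\Big\{\tfrac\eps4|\H_4(v,c_w,c)|+\sqrt{\eps}|\H_3(v,c_w,c)|\Big\}\,\mu_w(dv)
\end{align*}
by a constant uniform in $\eps\le\eps_0$. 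I would apply Hölder's inequality with exponents $p,q$, $\tfrac1p+\tfrac1q=1$: the factor $\VVert{\mathbb{V}[v]}^r$ (together with the indicator, which I simply drop after this splitting since $0\le\ind\le1$) is handled by Lemma \ref{LEM:Fern}, which gives $\int \exp\{\al\VVert{\mathbb{V}_w}^2\}\mu_w(dv)<\infty$ and hence $\int\VVert{\mathbb{V}[v]}^{rq}\mu_w(dv)<\infty$ for every $q$; the exponential factor $\exp\{p(\tfrac\eps4|\H_4|+\sqrt{\eps}|\H_3|)\}$ is controlled by Lemma \ref{LEM:UNIF}. One subtlety: Lemma \ref{LEM:UNIF} is stated with the \emph{signed} exponent $-\tfrac{\eps p}{4}\H_4-\sqrt{\eps}p\H_3\cdot w$, whereas here I need the absolute values $|\H_4|$, $|\H_3|$ in the exponent. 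To bridge this I would note $|x|\le x + 2|x|\ind_{\{x>0\}}$ is not quite what is needed; instead the clean route is $\exp\{a|x|\}\le \exp\{ax\}+\exp\{-ax\}$, so $\exp\{p\tfrac\eps4|\H_4|+p\sqrt{\eps}|\H_3|\}\le \exp\{p\tfrac\eps4\H_4+p\sqrt\eps\H_3\}\cdot(\dots)$ — more carefully, bound the product of two exponentials of absolute values by a sum of four exponentials of signed quantities, each of which is integrable by applying Lemma \ref{LEM:UNIF} with $w$ or $-w$ and with $\pm$ the given sign (and after restricting $\eps\le\eps_0$, $p$ as in that lemma). Since $\M=\{-1,+1\}$ is symmetric, replacing $w$ by $-w$ stays within the hypotheses of Lemma \ref{LEM:UNIF}, and the sign flips $\H_3\mapsto-\H_3$, $\H_4\mapsto-\H_4$ are accommodated by the same lemma with $p$ in the admissible range. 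After possibly shrinking $\eps_0$ and enlarging the implicit constant, this produces a bound $C_{\dl,p}<\infty$ independent of $\eps$.

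Putting the pieces together: choose $p$ large enough that $rq$ is whatever we like (any finite value works for Lemma \ref{LEM:Fern}) and $p$ within the range where Lemma \ref{LEM:UNIF} applies — since $p$ there is \emph{free} (the constant $A=A(p)$ is chosen accordingly), there is no conflict — and we obtain $\int_{\{\VVert{\mathbb{V}[\sqrt{\eps}v]}<\dl\}}|R_{k+1}(w,\sqrt{\eps}v)|\mu_w(dv)\les\sqrt{\eps}^{k+1}$, which is \eqref{ERR1}. The identity \eqref{ASM0} itself is then immediate from \eqref{W12} by linearity of the integral, once we know every term is integrable — and $Q_j(w,v)$ is integrable against $\mu_w$ on the full space by \eqref{POLY0} and Lemma \ref{LEM:Fern}, so in particular on the sublevel set. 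The main obstacle, such as it is, is purely bookkeeping: matching the signed exponentials in Lemma \ref{LEM:UNIF} to the absolute-value exponential coming from the Taylor remainder \eqref{rem01}, and making sure the choice of Hölder exponent $p$ is simultaneously compatible with both Lemma \ref{LEM:Fern} (trivially, any $p$) and Lemma \ref{LEM:UNIF} (again any $p$, with $A$ adjusted), so that the two integrability inputs can be combined. No genuinely new estimate is needed beyond what is already in the excerpt.
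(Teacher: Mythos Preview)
Your reduction to \eqref{rem01} and the plan to pull out $\sqrt{\eps}^{k+1}$ are fine, but the Hölder step breaks down at the exponential factor. Lemma \ref{LEM:UNIF} only controls
\[
\int \exp\Big\{-\tfrac{\eps p}{4}\H_4 - \sqrt{\eps}\,p\,\H_3\cdot w\Big\}\,\mu_w(dv),
\]
i.e.\ with the \emph{minus} sign in front of $\H_4$; its proof uses the Boué--Dupuis formula and relies on the coercive term $+\tfrac{\eps p}{4}\int_{\T^2}\Dr_N^4\,dx$ that this sign produces (see \eqref{H4a}--\eqref{J11}). Replacing $w$ by $-w$ flips only the $\H_3\cdot w$ term, not the $\H_4$ term, so your claim that ``the sign flip $\H_4\mapsto -\H_4$ is accommodated by the same lemma'' is false. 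And with the plus sign, $\int\exp\{+\tfrac{\eps p}{4}\H_4\}\,\mu_w(dv)$ is a wrong-sign $\Phi^4_2$ integral, which one cannot expect to be finite---certainly Lemma \ref{LEM:UNIF} does not cover it. Hence two of the four signed exponentials in your decomposition are uncontrolled, and the argument collapses.

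The paper sidesteps this entirely by using the restriction $\big\{\VVert{\mathbb{V}[\sqrt{\eps}v]}<\dl\big\}$ directly on the exponential in \eqref{rem01}, rather than dropping the indicator. On that set the homogeneity of Wick powers gives
\[
\tfrac{\eps}{4}\Big|\int_{\T^2}:\!v^4\!:_w\,dx\Big|\les \dl^2\|:\!v^4\!:_w\|_{C^{-\eta}}^{1/2},\qquad
\sqrt{\eps}\,\Big|\int_{\T^2}:\!v^3\!:_w\,dx\Big|\les \dl\,\|:\!v^3\!:_w\|_{C^{-\eta}}^{2/3},
\]
and similarly for the lower-order pieces of $\H_3,\H_4$. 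Consequently $\tfrac{\eps}{4}|\H_4|+\sqrt{\eps}|\H_3|\les \dl\,\VVert{\mathbb{V}[v]}^2$ pointwise on the constraint set, so the whole integrand is dominated by $\sqrt{\eps}^{k+1}\exp\{c\dl\VVert{\mathbb{V}[v]}^2\}$. For $\dl$ small enough this is integrable by Fernique (Lemma \ref{LEM:Fern}), and \eqref{ERR1} follows---no Hölder splitting and no appeal to Lemma \ref{LEM:UNIF} are needed. The constraint set is not cosmetic here; it is precisely what converts the dangerous $\eps$-dependent exponent into a $\dl$-small one.
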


\begin{proof}
By plugging \eqref{W11} and \eqref{W12} into \eqref{limN1}, we obtain \eqref{ASM0}. Hence, it suffices to consider \eqref{ERR1}. Recall \eqref{rem01} and
\begin{align}
\H_4(v,c_{w},c):&=\int_{\T^2}  :\! v^4 \!:_w  dx-6(c-c_{w}) \int_{\T^2} :\! v^2 \!:_w dx \label{W21}\\
\H_3(v,c_{w},c):&=\int_{\T^2}  :\! v^3 \!:_w  dx- 3(c-c_{w}) \int_{\T^2} v dx 
\label{W22}
\end{align}

\noi
where $c-c_w$ can be understood as the limit of \eqref{diffcov}. By using the homogenity of Wick powers and the structure of the set $\big\{ \VVert{\mathbb{V}[\sqrt{\eps}v]   } < \dl   \big\} $, we have 
\begin{align}
\bigg| \frac \eps4 \int_{\T^2}  :\! v^4 \!:_w  dx   \bigg| &\les \eps \bigg| \int_{\T^2}  :\! v^4 \!:_w  dx  \bigg|^\frac{2}{4} \bigg| \int_{\T^2}  :\! v^4 \!:_w  dx  \bigg|^\frac{2}{4} \notag \\
&\les \bigg| \int_{\T^2}  :\! (\sqrt{\eps}v)^4 \!:_w  dx  \bigg|^\frac{2}{4} \| :\! v^4 \!:_w \|_{C^{-\eta}}^{\frac 24}  \notag \\
&\les \dl^2 \| :\! v^4 \!:_w \|_{C^{-\eta}}^{\frac 24}, 
\label{W3}\\
\bigg| \sqrt{\eps} \int_{\T^2}  :\! v^3 \!:_w  dx   \bigg| &\les \sqrt{\eps} \bigg|  \int_{\T^2}  :\! v^3 \!:_w  dx   \bigg|^{\frac 13}  \bigg|  \int_{\T^2}  :\! v^3 \!:_w  dx   \bigg|^{\frac 23}   \notag \\
&\les  \bigg|  \int_{\T^2}  :\!  (\sqrt{\eps}v)^3 \!:_w  dx   \bigg|^{\frac 13}  \bigg|  \int_{\T^2}  :\!  v^3 \!:_w  dx   \bigg|^{\frac 23}  \notag \\
&\les \dl \| :\! v^3 \!:_w \|_{C^{-\eta}}^{\frac 23} \label{W4},
\end{align}

\noi
and
\begin{align}
\bigg|  \eps \int_{\T^2 } :\!  v^2 \!:_w  dx   \bigg| =\bigg| \int_{\T^2 } :\!  (\sqrt{\eps}v)^2 \!:_w  dx   \bigg|\le \dl^2.
\label{W5}
\end{align}

\noi
Hence, it follows from \eqref{rem01}, \eqref{W21}, \eqref{W22}, \eqref{W3}, \eqref{W4}, \eqref{W5}, and the polynomial growth of $\VVert{\mathbb{V}[v]}^r$ that
\begin{align*}
|R_{k+1}(w,\sqrt{\eps} v)| &\les  \sqrt{\eps}^{k+1} \VVert{\mathbb{V}[v]}^r\exp\Big\{ c\dl\VVert{\mathbb{V}[v] }^2    \Big\}\\
&\les  \sqrt{\eps}^{k+1} \exp\Big\{ 2c\dl\VVert{\mathbb{V}[v] }^2    \Big\}.
\end{align*}

\noi
for any $w \in \M$. Therefore, by exploiting Lemma \ref{LEM:Fern} with choosing $\dl_0>0$ sufficiently small, we have that that for every $0<\dl \le \dl_0$ 
\begin{align*}
\bigg|  \int_{ \big\{ \VVert{\mathbb{V}[\sqrt{\eps}v]   } < \dl   \big\}  } R_{k+1}(w, \sqrt{\eps} v)  \mu_{w}(dv)  \bigg| \les  \sqrt{\eps}^{k+1}.
\end{align*}

\noi
This completes the proof of Lemma \ref{LEM:Remain}.

\end{proof}

\subsection{Proof of Theorem \ref{THM:1}}
In this subsection, we present the proof of the main result (Theorem \ref{THM:1}). 

Before we present the proof of Theorem \ref{THM:1}, we first prove the following lemma.

\begin{lemma}\label{LEM:L3}
Let $\eta>0$, $\dl>0$, $F \in C^{k+1}(C^{-\eta}(\T^2);\R)$ with derivatives of at most polynomial growth rate and let $Q_j(w,v)$ be the $j$-th Fr\'echet derivative of $G(w,v)$ in $v$ as in \eqref{FRE0}, depending on $F$. Then, there exists a sufficiently small constant $\g>0$
such that 
\begin{align*}
\int_{ \big\{ \VVert{\mathbb{V}[\sqrt{\eps}v]   } \ge \dl   \big\}  } Q_j(w,v) \mu_w(dv) \les  \exp\Big\{  -\frac{\g \dl^2}{\eps} \Big\}
\end{align*}

\noi
for all $ w\in \M$.

\end{lemma}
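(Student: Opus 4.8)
The plan is to bound the integral of $Q_j(w,v)$ over the exterior region $\{\VVert{\mathbb{V}[\sqrt{\eps}v]}\ge\dl\}$ by combining the polynomial growth bound \eqref{POLY0} with Fernique's theorem (Lemma \ref{LEM:Fern}). First I would apply \eqref{POLY0}, which gives $|Q_j(w,v)|\le C(1+\VVert{\mathbb{V}[v]})^r$ with a constant $C$ and exponent $r$ uniform over $w\in\M$, so that
\begin{align}
\bigg| \int_{ \{ \VVert{\mathbb{V}[\sqrt{\eps}v]}\ge\dl \} } Q_j(w,v)\,\mu_w(dv) \bigg|
\les \int_{ \{ \VVert{\mathbb{V}[\sqrt{\eps}v]}\ge\dl \} } (1+\VVert{\mathbb{V}[v]})^r \,\mu_w(dv).
\label{L3pf1}
\end{align}

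Next I would exploit the homogeneity of the Wick powers: since $:\!(\sqrt\eps v)^k\!:_w = \eps^{k/2}:\!v^k\!:_w$, one has $\VVert{\mathbb{V}[\sqrt\eps v]} = \sqrt\eps\,\VVert{\mathbb{V}[v]}$ (from the definition \eqref{MDWI}, where each $k$-th Wick power enters with a power $1/k$). Hence the constraint $\VVert{\mathbb{V}[\sqrt\eps v]}\ge\dl$ is exactly $\VVert{\mathbb{V}[v]}\ge\dl/\sqrt\eps$. On that set, for any $\g>0$ we can absorb the polynomial factor and simultaneously insert a Gaussian-exponential weight, using the elementary inequality that for every $\g>0$ there is $C_{\g,r}$ with $(1+x)^r \le C_{\g,r}\exp(\tfrac{\g}{2}x^2)$ for all $x\ge 0$, together with $\exp\{\tfrac{\g}{2}\VVert{\mathbb{V}[v]}^2\}\le \exp\{\g\VVert{\mathbb{V}[v]}^2\}\exp\{-\tfrac{\g\dl^2}{2\eps}\}$ valid when $\VVert{\mathbb{V}[v]}\ge\dl/\sqrt\eps$ (since then $\tfrac{\g}{2}\VVert{\mathbb{V}[v]}^2 \le \g\VVert{\mathbb{V}[v]}^2 - \tfrac{\g\dl^2}{2\eps}$). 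This yields
\begin{align}
\int_{ \{ \VVert{\mathbb{V}[v]}\ge\dl/\sqrt\eps \} } (1+\VVert{\mathbb{V}[v]})^r \,\mu_w(dv)
\les \exp\Big\{-\frac{\g\dl^2}{2\eps}\Big\} \int \exp\Big\{ \g\,\VVert{\mathbb{V}[v]}^2 \Big\} \mu_w(dv).
\label{L3pf2}
\end{align}

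Finally I would choose $\g>0$ smaller than the constant $\al$ from Lemma \ref{LEM:Fern}, so that the remaining integral $\int \exp\{\g\VVert{\mathbb{V}[v]}^2\}\,\mu_w(dv)$ is finite, with a bound uniform in $w\in\M$ because $\M=\{-1,+1\}$ is finite. Replacing $\g$ by $\g/2$ (or simply renaming the constant) gives the claimed decay $\les\exp\{-\g\dl^2/\eps\}$. This argument is essentially the one already used for \eqref{J16}--\eqref{J17} in the proof of Lemma \ref{LEM:out1}, so there is no genuine obstacle; the only point requiring a little care is the bookkeeping of the homogeneity relation $\VVert{\mathbb{V}[\sqrt\eps v]}=\sqrt\eps\,\VVert{\mathbb{V}[v]}$ and making sure the constant $\g$ is chosen admissibly relative to $\al$, uniformly over the (finitely many) minimizers $w$.
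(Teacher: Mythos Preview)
Your proposal is correct and follows essentially the same approach as the paper: both use the polynomial bound \eqref{POLY0}, the homogeneity $\VVert{\mathbb{V}[\sqrt\eps v]}=\sqrt\eps\,\VVert{\mathbb{V}[v]}$ to turn the constraint into a Chebyshev-type exponential factor $\exp\{-\g\dl^2/\eps\}\exp\{\g\VVert{\mathbb{V}[v]}^2\}$, absorb the polynomial into the exponential, and finish with Lemma~\ref{LEM:Fern} by taking $\g$ small. The only cosmetic difference is the order in which the polynomial factor is absorbed and the indicator is replaced (the paper gets a factor $\exp\{2\g\VVert{\mathbb{V}[v]}^2\}$ rather than your $\exp\{\g\VVert{\mathbb{V}[v]}^2\}$), which is immaterial.
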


\begin{proof}
By exploiting the condition $\big\{ \VVert{\mathbb{V}[\sqrt{\eps}v]   } \ge \dl   \big\} $ and the polynomial growth rate \eqref{POLY0}, we have
\begin{align*}
\bigg|\int_{ \big\{ \VVert{\mathbb{V}[\sqrt{\eps}v]   } \ge \dl   \big\}  } Q_j(w,v) \mu_w(dv) \bigg|& \les \exp\Big\{-\frac{\g \dl^2}{\eps} \Big\} \int Q_j(w,v) \exp\Big\{ \g \VVert{\mathbb{V}[v] }^2    \Big\} \mu_{w}(dv)\\
&\les \exp\Big\{-\frac{\g \dl^2}{\eps} \Big\} \int \exp\Big\{ 2\g \VVert{\mathbb{V}[v] }^2    \Big\} \mu_{w}(dv). 
\end{align*}

\noi
for any $\g>0$. By choosing $\g>0$ sufficiently small and using Lemma \ref{LEM:Fern}, we have
\begin{align*}
\int \exp\Big\{ 2\g \VVert{\mathbb{V}[v] }^2    \Big\} \mu_{w}(dv)<\infty.
\end{align*}

\noi
This completes the proof of Lemma \ref{LEM:L3}.
\end{proof}

We are now ready to prove Theorem \ref{THM:1}.

\begin{proof}[Proof of Theorem \ref{THM:1}]
From Proposition \ref{PROP:Var}, there exists $c=c(\dl)>0$ and sufficiently small $\eps_0 >0 $ such that 
\begin{align}
&\int F(\phi)\exp\Big\{-\frac 1\eps \mathcal{V}(\phi)  \Big\} \mu_\eps(d\phi) \notag \\
&=\int_{\{ \textup{dist}_{\mathcal{C}^{-\eta} }(\phi,\mathcal{M}) <\dl  \} } F(\phi) \exp\bigg\{ -\frac 1\eps \mathcal{V}(\phi)\bigg\} \mu_{\eps}(d\phi)\notag \\
&\hphantom{X} +  \int_{\{ \textup{dist}_{\mathcal{C}^{-\eta} }(\phi,\mathcal{M}) \ge \dl  \} } F(\phi) \exp\bigg\{ -\frac 1\eps \mathcal{V}(\phi)\bigg\} \mu_{\eps}(d\phi) \notag \\
&=O\bigg(\exp\Big\{ -\frac{c\dl^2}{\eps}  \Big\} \bigg) + \int_{\{ \textup{dist}_{\mathcal{C}^{-\eta} }(\phi,\mathcal{M}) <\dl  \}   } F(\phi) \exp\bigg\{ -\frac 1\eps \mathcal{V}(\phi)\bigg\} \mu_{\eps}(d\phi) \notag \\ 
\label{CZZ1}
\end{align}

\noi
for every $0< \eps \le \eps_0 $. Thanks to Lemmas \ref{LEM:LNT} and \ref{LEM:out1}, we can express and reduce the last integral in \eqref{CZZ1} as follows
\begin{align}
&\int_{\{ \textup{dist}_{\mathcal{C}^{-\eta} }(\phi,\mathcal{M}) <\dl  \}   } F(\phi) \exp\bigg\{ -\frac 1\eps \mathcal{V}(\phi)\bigg\} \mu_{\eps}(d\phi) \notag \\
&=O\bigg(\exp\Big\{ -\frac{c\dl^2}{\eps}  \Big\} \bigg)+\sum_{w \in \M}\Bigg[\dr_{\textup{re}}(w)  \int_{ \big\{ \VVert{\mathbb{V}[\sqrt{\eps}v]   } < \dl   \big\} } F(w+\sqrt{\eps}v ) \notag \\
&\hphantom{XXXXXXXXX} \times \exp\Big\{-\frac \eps4 \H_4(v,c_{w},c) -\sqrt{\eps} \H_3(v,c_{w},c)\cdot w  \Big\}\mu_{w}(dv)  \Bigg]
\label{CZZ2}
\end{align}

\noi
for every $0<\eps\le \eps_0$ and $0<\dl\le \dl_0$, where $\dl_0$ comes from Lemma \ref{LEM:expN}. By combining Lemmas \ref{LEM:Remain} and \ref{LEM:L3}, we have  
\begin{align}
&\sum_{w \in \M}\Bigg[\dr_{\textup{re}}(w)  \int_{ \big\{ \VVert{\mathbb{V}[\sqrt{\eps}v]   } < \dl   \big\} } F(w+\sqrt{\eps}v )
\exp\Big\{-\frac \eps4 \H_4(v,c_{w},c) -\sqrt{\eps} \H_3(v,c_{w},c)\cdot w  \Big\}\mu_{w}(dv)  \Bigg] \notag \\
&=\sum_{w\in \M} \dr_{\textup{re}}(w) \Bigg\{ \sum_{j=0}^k \frac{\sqrt{\eps}^j}{j!}  \int_{ \big\{ \VVert{\mathbb{V}[\sqrt{\eps}v]   } < \dl   \big\}  }   Q_j(w,v)  \mu_{w}(dv)+\int_{ \big\{ \VVert{\mathbb{V}[\sqrt{\eps}v]   } < \dl   \big\}  } R_{k+1}(w, \sqrt{\eps} v) \mu_{w}(dv)  \Bigg\} \notag \\
&=\sum_{w\in \M} \dr_{\textup{re}}(w) \Bigg\{ \sum_{j=0}^k \frac{\sqrt{\eps}^j}{j!}  \int Q_j(w,v)  \mu_{w}(dv)\bigg\}+O\Big(\exp\Big\{  -\frac{\g \dl^2}{\eps} \Big\}\Big) +O\big(\eps^{\frac{k+1}{2}}\big) \notag \\
&=\sum_{j=0}^k a_j \eps^{\frac{j}{2}} +O\big(\eps^{\frac{k+1}{2}}\big)
\label{CZZ3}
\end{align}

\noi
where
\begin{align}
a_j=\sum_{w\in \M}   \dr_{\textup{re}}(w)\frac{1}{j!}\int Q_j(w,v)  \mu_{w}(dv).
\label{coeff}
\end{align}

\noi
It follows from \eqref{CZZ1}, \eqref{CZZ2}, and \eqref{CZZ3} that
\begin{align*}
\int_{\{ \textup{dist}_{\mathcal{C}^{-\eta} }(\phi,\mathcal{M}) <\dl  \}   } F(\phi) \exp\bigg\{ -\frac 1\eps \mathcal{V}(\phi)\bigg\} \mu_{\eps}(d\phi)=\sum_{j=0}^k a_j \eps^{\frac{j}{2}} +O\big(\eps^{\frac{k+1}{2}}\big),
\end{align*}

\noi
from which we obtain the result.

\end{proof}

\subsection{Law of large numbers and central limit theorem}
In this subsection, we prove the law of large numbers and central limit theorem of $\Phi^4_2$-measure. We first present the proof of the law of large numbers.

\begin{proof}[Proof of Theorem \ref{THM:2}]
Let $F$ be a continuous functional on $C^{-\eta}(\T^2)$, with at most polynomial growth. Thanks to Proposition \ref{PROP:Var}, we have 
\begin{align}
&\int F(\phi)\exp\Big\{ -\frac{1}{\eps} \mathcal{V}(\phi)   \Big\}  \mu_\eps(d\phi) \notag\\
&=O\bigg(\exp\Big\{ -\frac{c\dl^2}{\eps}  \Big\} \bigg) + \int_{\{ \textup{dist}_{\mathcal{C}^{-\eta} }(\sqrt{\eps} \phi,\mathcal{M}) <\dl  \}   } F(\phi) \exp\bigg\{ -\frac 1\eps \mathcal{V}( \sqrt{\eps}\phi)\bigg\} \mu(d\phi). 
\label{JS1}
\end{align}

\noi
It follows from Lemmas \ref{LEM:LNT} and \ref{LEM:out1} that 
\begin{align}
&\int_{\{ \textup{dist}_{\mathcal{C}^{-\eta} }(\sqrt{\eps} \phi,\mathcal{M}) <\dl  \}   } F(\phi) \exp\bigg\{ -\frac 1\eps \mathcal{V}( \sqrt{\eps}\phi)\bigg\} \mu(d\phi)  \notag \\
&=\sum_{w \in \M}\Bigg[\dr_{\textup{re}}(w)  \int_{ \{ \| \sqrt{\eps}v \|_{C^{-\eta}} < \dl   \}  } F(w+\sqrt{\eps}v )\exp\Big\{-\frac \eps4 \H_4(v,c_{w},c) -\sqrt{\eps} \H_3(v,c_{w},c)\cdot w  \Big\}\mu_{w}(dv)  \Bigg] \notag \\
&=O\big( e^{-\frac{c\dl^2}{\eps}} \big) +\sum_{w \in \M}\Bigg[\dr_{\textup{re}}(w)  \int_{ \big\{ \VVert{\mathbb{V}[\sqrt{\eps}v]   } < \dl   \big\} } F(w+\sqrt{\eps} v) \notag \\
&\hphantom{XXXXXXXXXXXXXX}\times \exp\Big\{-\frac \eps4 \H_4(v,c_{w},c) -\sqrt{\eps} \H_3(v,c_{w},c)\cdot w  \Big\}\mu_{w}(dv)  \Bigg].
\label{JS2}
\end{align}

\noi
On the set $ \big\{ \VVert{\mathbb{V}[\sqrt{\eps}v]   } < \dl   \big\}$, by proceeding with \eqref{W3}, \eqref{W4}, \eqref{W5}, and the polynomial growth of $F(w+\sqrt{\eps}v)$ in $v$, we have 
\begin{align*}
F(w+\sqrt{\eps}v)\exp\Big\{\frac \eps4 \big| \H_4(v,c_{w},c)\big| +\sqrt{\eps} \big|\H_3(v,c_{w},c)\cdot w \big|  \Big\} &\les  F(w+\sqrt{\eps}v) \exp\Big\{ c\dl\VVert{\mathbb{V}[v] }^2    \Big\} \\
& \les \exp\Big\{ 2c\dl\VVert{\mathbb{V}[v] }^2    \Big\}
\end{align*}

\noi
for some constant $c>0$. By Lemma \ref{LEM:Fern} and choosing $\dl>0$ sufficiently small, we have
\begin{align*}
\int \exp\Big\{ 2c\dl\VVert{\mathbb{V}[v] }^2    \Big\} \mu_w(dv)<\infty.
\end{align*}

\noi
Also, for any fixed $v \in C^{-\eta}$, we have $F(w+\sqrt{\eps}v)\to F(w)$, $\frac \eps4 \H_4(v,c_{w},c)+ \sqrt{\eps} \H_3(v,c_{w},c) \to 0$,  and $\ind_{ \big\{ \VVert{\mathbb{V}[\sqrt{\eps}v]   } < \dl   \big\} }(v) \to 1$ as $\eps \to 0$ since $F$ is a continuous functional on $C^{-\eta}(\T^2)$. Hence, by using the dominated convergence theorem, we have
\begin{align}
&\lim_{\eps \to 0}\sum_{w \in \M}\Bigg[\dr_{\textup{re}}(w)  \int_{ \big\{ \VVert{\mathbb{V}[\sqrt{\eps}v]   } < \dl   \big\} } F(w+\sqrt{\eps}v )\exp\Big\{-\frac \eps4 \H_4(v,c_{w},c) -\sqrt{\eps} \H_3(v,c_{w},c)\cdot w  \Big\}\mu_{w}(dv)  \Bigg] \notag \\
&=\sum_{w \in \M} \int F(w) \dr_{\textup{re}}(w) \mu_w(dv) \notag \\
&=\sum_{w \in \M}  F(w) \dr_{\textup{re}}(w). 
\label{JS3}
\end{align}

\noi
By following \eqref{JS1}, \eqref{JS2}, \eqref{JS3} with $F=1$, we have
\begin{align}
\lim_{\eps \to 0}Z_\eps&=\lim_{\eps \to 0} \int \exp\Big\{ -\frac{1}{\eps} \mathcal{V}(\phi)   \Big\}  \mu_\eps(d\phi)=\sum_{w \in \M}  \dr_{\textup{re}}(w).
\label{JS31}
\end{align}

\noi
It follows from \eqref{JS1}, \eqref{JS2}, \eqref{JS3}, and \eqref{JS31} that 
\begin{align}
&\lim_{\eps \to 0}\int F(\phi) \exp\Big\{ -\frac{1}{\eps} \mathcal{V}(\phi)   \Big\}  \rho_\eps(d\phi) \notag \\
&=\lim_{\eps \to 0}\int F(\phi) \exp\Big\{ -\frac{1}{\eps} \mathcal{V}(\phi)   \Big\}  \frac{\mu_\eps(d\phi)}{Z_\eps}\notag  \\
&=\sum_{w\in \M} b(w) F(w) 
\end{align}

\noi
where 
\begin{align}
b(w):=\frac{ \dr_{\textup{re}}(w)}{\sum_{\bar w\in \M}  \dr_{\textup{re}}(\bar w)},
\label{bw}
\end{align}

\noi
from which we obtain the result.

\end{proof}

\begin{remark}\rm
From \eqref{W12} and \eqref{coeff}, the leading order term in \eqref{asymexp} is $a_0=\sum_{w \in \M} \dr_{\text{re}}(w) F(w)$.
Regarding the partition function $Z_\eps$ \eqref{PART12}  of $\Phi^4_2$-measure   
(i.e.~with $F=1$ in \eqref{W11}), the leading order term is $a_0=\sum_{w \in \M} \dr_{\text{re}}(w)$. Therefore, if we assume that $F \in C^{1}(C^{-\eta}(\T^2);\R)$ with
$F$ and its derivatives of polynomial growth, then it follows from Theorem \ref{THM:1} that 
\begin{align}
\int F(\phi)\exp\Big\{-\frac 1\eps \mathcal{V}(\phi)  \Big\} \mu_\eps(d\phi)&= \sum_{w \in \M} \dr_{\text{re}}(w) F(w) +O(\eps^{\frac 12}), \label{YY1}\\
Z_\eps=\int \exp\Big\{-\frac 1\eps \mathcal{V}(\phi)  \Big\} \mu_\eps(d\phi)&= \sum_{w \in \M} \dr_{\text{re}}(w) +O(\eps^{\frac 12}) \label{YY2}.
\end{align}

\noi
Hence, from \eqref{YY1} and \eqref{YY2}, we have
\begin{align*}
\lim_{\eps \to 0}\int F(\phi)\exp\Big\{-\frac 1\eps \mathcal{V}(\phi)  \Big\} \rho_\eps(d\phi)&=\lim_{\eps \to 0}\int F(\phi)\exp\Big\{-\frac 1\eps \mathcal{V}(\phi)  \Big\} \frac{\mu_\eps(d\phi)}{Z_\eps}\\
&=\sum_{w\in \M} b(w)F(w)
\end{align*}

\noi
where $b(w)$ is as in \eqref{bw}. We, however, point out that compared to the above proof ($F$ is a continuous functional on $C^{-\eta}(\T^2)$) , we have to assume the additional regularity of $F$ (i.e.~$F \in C^{1}(C^{-\eta}(\T^2);\R)$).
\end{remark}

We next prove the central limit theorem.

\begin{proof}[Proof of Theorem \ref{THM:3}]
From Proposition \ref{PROP:Var}, we have 
\begin{align}
&\int F\big(\sqrt{\eps}^{-1}(\phi-\pi(\phi) )   \big)\exp\Big\{ -\frac{1}{\eps} \mathcal{V}(\phi)   \Big\}  \mu_\eps(d\phi) \notag \\
&=\int F\big(\sqrt{\eps}^{-1}(\sqrt{\eps}\phi-\pi(\sqrt{\eps}\phi) )   \big)\exp\Big\{ -\frac{1}{\eps} \mathcal{V}(\sqrt{\eps}\phi)   \Big\}  \mu(d\phi) \notag \\
&=O\big( e^{-\frac{c\dl^2}{\eps}} \big) + \int_{\{ \textup{dist}_{\mathcal{C}^{-\eta} }(\sqrt{\eps} \phi,\mathcal{M}) <\dl  \}   } F\big(\sqrt{\eps}^{-1}(\sqrt{\eps}\phi-\pi(\sqrt{\eps}\phi) )   \big) \exp\bigg\{ -\frac 1\eps \mathcal{V}( \sqrt{\eps}\phi)\bigg\} \mu(d\phi). 
\label{SB1}
\end{align}

\noi
For $\dl>0$ small enough, we have $\pi(\sqrt{\eps}v+w)=w$ for all $\|\sqrt{\eps}v \|<\dl$.
By proceeding as in Lemma \ref{LEM:expN} and \ref{LEM:LNT} with the change of variable $\phi\mapsto \phi+\sqrt{\eps}^{-1}w$, choosing $\dl>0$ sufficiently small, and using Lemma \ref{LEM:out1}, we have 
\begin{align}
&\int_{\{ \textup{dist}_{\mathcal{C}^{-\eta} }(\sqrt{\eps} \phi,\mathcal{M}) <\dl  \}   } F\big(\sqrt{\eps}^{-1}(\sqrt{\eps}\phi-\pi(\sqrt{\eps}\phi) )   \big) \exp\bigg\{ -\frac 1\eps \mathcal{V}( \sqrt{\eps}\phi)\bigg\} \mu(d\phi)  \notag \\
&=\sum_{w\in \M}
\int_{ \{ \| \sqrt{\eps}\phi-w \|_{C^{-\eta}}<\dl \} } F\big(\sqrt{\eps}^{-1}(\sqrt{\eps}\phi-\pi(\sqrt{\eps}\phi) )   \big) \exp\bigg\{ -\frac 1\eps \mathcal{V}( \sqrt{\eps}\phi)\bigg\} \mu(d\phi) \notag \\
&=\sum_{w \in \M}\Bigg[\dr_{\textup{re}}(w)  \int_{ \{ \| \sqrt{\eps}v \|_{C^{-\eta}} < \dl   \}  } F(v )\exp\Big\{-\frac \eps4 \H_4(v,c_{w},c) -\sqrt{\eps} \H_3(v,c_{w},c)\cdot w  \Big\}\mu_{w}(dv)  \Bigg] \notag \\
&=O\big( e^{-\frac{c\dl^2}{\eps}} \big) +\sum_{w \in \M}\Bigg[\dr_{\textup{re}}(w)  \int_{ \big\{ \VVert{\mathbb{V}[\sqrt{\eps}v]   } < \dl   \big\} } F(v ) \notag \\
&\hphantom{XXXXXXXXXXXXXX}\times \exp\Big\{-\frac \eps4 \H_4(v,c_{w},c) -\sqrt{\eps} \H_3(v,c_{w},c)\cdot w  \Big\}\mu_{w}(dv)  \Bigg].
\label{SB2}
\end{align}

\noi
On the set $ \big\{ \VVert{\mathbb{V}[\sqrt{\eps}v]   } < \dl   \big\}$, by following \eqref{W3}, \eqref{W4}, \eqref{W5}, and the polynomial growth of $F(v)$, we have 
\begin{align*}
F(v)\exp\Big\{\frac \eps4 \big| \H_4(v,c_{w},c)\big| +\sqrt{\eps} \big|\H_3(v,c_{w},c)\cdot w \big|  \Big\} &\les  F(v) \exp\Big\{ c\dl\VVert{\mathbb{V}[v] }^2    \Big\} \\
& \les \exp\Big\{ 2c\dl\VVert{\mathbb{V}[v] }^2    \Big\}
\end{align*}

\noi
for some constant $c>0$. Thanks to Lemma \ref{LEM:Fern}, by choosing $\dl>0$ sufficiently small, we have
\begin{align*}
\int \exp\Big\{ 2c\dl\VVert{\mathbb{V}[v] }^2    \Big\} \mu_w(dv)<\infty.
\end{align*}

\noi
Also, for any fixed $v \in C^{-\eta}$, we have $\frac \eps4 \H_4(v,c_{w},c)+ \sqrt{\eps} \H_3(v,c_{w},c) \to 0$ as $\eps \to 0$ and $\ind_{ \big\{ \VVert{\mathbb{V}[\sqrt{\eps}v]   } < \dl   \big\} }(v) \to 1$ as $\eps \to 0$. Hence, by using the dominated convergence theorem, we have
\begin{align}
&\lim_{\eps \to 0}\sum_{w \in \M}\Bigg[\dr_{\textup{re}}(w)  \int_{ \big\{ \VVert{\mathbb{V}[\sqrt{\eps}v]   } < \dl   \big\} } F(v )\exp\Big\{-\frac \eps4 \H_4(v,c_{w},c) -\sqrt{\eps} \H_3(v,c_{w},c)\cdot w  \Big\}\mu_{w}(dv)  \Bigg] \notag \\
&=\sum_{w \in \M} \int F(v) \dr_{\textup{re}}(w) \mu_w(dv).
\label{SB3}
\end{align}

\noi
By combining \eqref{SB1}, \eqref{SB2}, \eqref{SB3}, and \eqref{YY2}, we obtain
\begin{align}
&\lim_{\eps \to 0}\int F\big(\sqrt{\eps}^{-1}(\phi-\pi(\phi) )   \big)\exp\Big\{ -\frac{1}{\eps} \mathcal{V}(\phi)   \Big\}  \rho_\eps(d\phi) \notag \\
&=\lim_{\eps \to 0}\int F\big(\sqrt{\eps}^{-1}(\phi-\pi(\phi) )   \big)\exp\Big\{ -\frac{1}{\eps} \mathcal{V}(\phi)   \Big\}  \frac{\mu_\eps(d\phi)}{Z_\eps}\notag  \\
&=\sum_{w\in \M} \int F(v) b(w) \mu_w(dv) \notag \\
&=\int F(v) \nu(dv)
\label{FLC}
\end{align}

\noi
where $b(w)$ is as in \eqref{bw} and
\begin{align*}
\nu:=\sum_{w\in \M} b(w) \mu_w.
\end{align*}

\noi
This shows that the fluctuation in \eqref{FLC} is the $\sum_{w}b(w)$-mixture of Gaussian fluctuations $\mu_w$ in each $w\in \M$.

\end{proof}

\begin{ackno}\rm
The authors are very grateful to Ajay Chandra for pointing out several references within quantum field theory. B.G. acknowledges support by the Max Planck Society through the Research
Group ``Stochastic Analysis in the Sciences (SAiS)”. This work was co-funded by the European Union (ERC, FluCo, grant agreement No. 101088488). Views and opinions expressed are however those of the author(s) only and do not necessarily reflect those of the European Union or of the European Research Council. Neither the European Union nor the granting authority can be held responsible for them.
\end{ackno}

\end{document}